%% file: main.tex
\documentclass[11pt, reqno]{amsart}
\usepackage{graphicx} % Required for inserting images
\usepackage{geometry}
\usepackage{lipsum}

\usepackage{url}
\usepackage{amsmath}
\usepackage{amssymb}
\usepackage{amsfonts}
\usepackage{amsthm}
\usepackage{hyperref}
\usepackage{bm}
\usepackage{blkarray}
\usepackage{xcolor}
\usepackage{leftindex}
\usepackage[capitalise]{cleveref}
\usepackage{tikz}
\usepackage{pgfplots}
\pgfplotsset{compat=1.18}
\usepgfplotslibrary{fillbetween}
\usepackage[url=false, isbn=false, doi=false]{biblatex}
\usepackage{fullpage}
\usepackage{caption}

\newtheorem{theorem}{Theorem}[section]
\newtheorem{corollary}[theorem]{Corollary}
\newtheorem{lemma}[theorem]{Lemma}
\newtheorem{remark}[theorem]{Remark}

\theoremstyle{definition}
\newtheorem{definition}[theorem]{Definition}
\theoremstyle{remark}
\newtheorem*{acknowledgement}{Acknowledgement}

\patchcmd{\@startsection}
  {\@afterindenttrue}
  {\@afterindentfalse}
  {}{}

\title{Future stability of large-data wave maps in energy-supercritical dimensions}

\author{Andras Bonk}

\address{Universit\"at Wien, Fakult\"at f\"ur Mathematik,
  Oskar-Morgenstern-Platz 1, 1090 Vienna, Austria}
\email{andras.bonk@univie.ac.at}
\author{Roland Donninger}

\address{Universit\"at Wien, Fakult\"at f\"ur Mathematik,
  Oskar-Morgenstern-Platz 1, 1090 Vienna, Austria}
\email{roland.donninger@univie.ac.at}
  
\thanks{This research was funded in whole or in part by the
Austrian Science Fund (FWF) 10.55776/P34560,
10.55776/PIN2161424, and 10.55776/PAT9429324. For open access purposes, the authors have
applied a CC BY public copyright license to any author-accepted manuscript version arising from this submission.}

\begin{document}

\input{commands}

\input{abstract}

\maketitle

\input{Introduction}

\input{FreeEvo}

\input{LinearEvo}

\input{NonlinearEvo}

\input{proofofmainresults}

\begin{acknowledgement}
    The first author thanks his research group, in particular Frederick Moscatelli for many interesting and insightful discussions and Matthias Ostermann for useful comments on an earlier draft of this paper.
\end{acknowledgement}
\clearpage

\printbibliography

\end{document}

%% file: commands.tex
\newcommand{\abs}[1]{\lvert {#1} \rvert}
\newcommand{\norm}[2]{\left\lVert {#1} \right\rVert_{#2}}
\newcommand{\fracpi}[0]{\frac{\pi}{2}}
\newcommand{\innerp}[3]{({#1} \mid {#2})_{\mathcal{H}_{#3}}}
\newcommand{\japbra}[1]{\langle #1 \rangle}
\newcommand{\unitball}[0]{\mathbb{B}_1^d}
\newcommand{\hypergeometricf}[1]{\leftindex_2 F_1 \left(#1 \right)}
\newcommand{\innprod}[3]{\left( #1 \mid #2 \right)_{#3}}
\newcommand{\sobspace}[1]{H^{#1}\left(\B^d\right)}
\newcommand{\sobspacerad}[1]{H_{\operatorname{rad}}^{#1}\left(\B^d\right)}
\newcommand{\sobspacew}[1]{H^{#1}\left(\B^d;W\right)}
\newcommand{\sobspacewrad}[1]{H_{\operatorname{rad}}^{#1}\left(\B^d;W\right)}
\newcommand{\hilspace}[1]{\mathcal{H}^{#1}(\B^d)}
\newcommand{\hilspacew}[1]{\mathcal{H}^{#1}(\B^d;\mathbf{W})}
\newcommand{\hilspacerad}[1]{\mathcal{H}_{\operatorname{rad}}^{#1}(\B^d)}
\newcommand{\hilspacewrad}[1]{\mathcal{H}_{\operatorname{rad}}^{#1}(\B^d;\mathbf{W})}
\newcommand{\R}[0]{\mathbb{R}}
\renewcommand{\S}[0]{\mathbb{S}}
\newcommand{\C}[0]{\mathbb{C}}
\newcommand{\N}[0]{\mathbb{N}}
\newcommand{\B}[0]{\mathbb{B}}
\newcommand{\pt}[0]{\partial}
\newcommand{\dimregn}[0]{\N_{\operatorname{odd},\geq 5}\times \N_{> \frac{d}{2}}}
\newcommand{\cinftyradw}[0]{C^\infty_{\operatorname{rad}}(\overline{\B^d};W)^2}
\newcommand{\cinftyjointlyw}[0]{C^\infty([0,\infty)\times \overline{\B^d};W)^2}
\newcommand{\cinftyradwone}[0]{C^\infty_{\operatorname{rad}}(\overline{\B^d};W)}
\newcommand{\cinftyjointlywone}[0]{C^\infty([0,\infty)\times \overline{\B^d};W)}
\newcommand{\fixptspace}[1]{\mathfrak{X}^{d,#1}}

%% file: abstract.tex
\begin{abstract}
    We consider energy-supercritical co-rotational wave maps from Minkowski spacetime to the sphere in odd spatial dimensions. The equation admits an explicit co-rotational self-similar blowup solution, which also induces solutions that blow up in the past. In the region after the blowup the solution treated in this paper is remarkable, as it is smooth forward in time and exhibits less than dispersive decay. We prove nonlinear asymptotic stability of this large-data self-similar solution inside forward light cones. In particular, we identify an open set of initial data close to the explicit solution that give rise to forward-in-time wave maps whose decay is slower than that of generic free waves.
\end{abstract}

%% file: Introduction.tex
\section{Introduction}
In this paper, we study \textit{wave maps} with domain manifold $(1+n)$-dimensional Minkowski spacetime $\R^{1,n}$ and target manifold the $n$-sphere $\mathbb{S}^n \subset \R^{n+1}$, embedded as a Riemannian submanifold in Euclidean space $\R^{n+1}$. In this context, the extrinsic formulation of a wave map is a function $U: \R^{1,n} \rightarrow \mathbb{S}^n \subset \R^{n+1}$ that solves the \textit{wave maps equation}
\begin{align}\label{wavemaps}
    \partial^\mu \partial_\mu U + \left(\partial^\mu U \cdot \partial_\mu U \right)U=0.
\end{align}
Here and throughout we follow the conventions that Greek indices run from $0$ to $n$, that $\partial^0= -\partial_0, \partial^i= \partial_i$ for all $i=1,\dots,n$, that Einstein's summation convention is in effect, and that $\cdot$ denotes the Euclidean inner product on $\R^{n+1}$.

Wave maps are subject to a scaling symmetry. If $U$ solves Eq.~\eqref{wavemaps}, then so does $U_\lambda$ given by $U_\lambda(t,x):= U\left(\frac{t}{\lambda},\frac{x}{\lambda}\right)$ for $\lambda >0$. The conserved \textit{energy} is given by
\begin{align*}
    E(U)(t)= \frac{1}{2}\int_{\R^n} \partial_t U(t,\cdot) \cdot \partial_t U(t,\cdot) + \partial^i U(t,\cdot) \cdot \partial_i U(t,\cdot).
\end{align*}
It scales as $E(U_\lambda)(t)=\lambda^{n-2}E(U)\left(\frac{t}{\lambda}\right)$ for $\lambda >0$. This scaling suggests general heuristics for local and global well-posedness in the energy space. Accordingly, the respective models in different dimensions are termed \textit{(energy-)}\textit{subcritical} if $n=1$, \textit{critical} if $n=2$, and \textit{supercritical} if $n \geq 3$. 

We are interested in the supercritical regime. Here the scaling favours the shrinking of large solutions, by which one anticipates blowup. Indeed, over the last few decades, supercritical results have mainly concerned the existence and stability of solutions that blow up in finite time. An important (though not exclusive \cite{ghouldiffblowup}) blowup mechanism is provided by \textit{self-similar} solutions. It is believed that this type of blowup is generic \cite{bizonbiernat,bizonchmajtabor,Donningergenblow}. A wave map is called self-similar if it is invariant under scaling, i.e. if $U_{\lambda}=U$.
\subsection{Explicit solutions and symmetry reduction}
Self-similar solutions already arise within the subclass of \textit{co-rotational} or \textit{one-equivariant} solutions; see ~\cite[p.~109]{geobook} for the general definition. For our purposes, a map $U: \R^{1,n} \rightarrow \S^n \subset \R^{n+1}$ is called co-rotational if there exists a function $\widetilde{u}: \R \times [0,\infty) \rightarrow \R$ such that
\begin{align*}
    U(t,x)=\begin{pmatrix}
        \sin\left(\abs{x}\widetilde{u}(t,\abs{x})\right)\frac{x}{\abs{x}}\\
        \cos\left(\abs{x}\widetilde{u}(t,\abs{x})\right)
    \end{pmatrix}.
\end{align*}
Notably, there are self-similar co-rotational blowup solutions that are known in closed form. By time-translation symmetry, these are given by the family \cite{closedformn3,bizonbiernat}
\begin{align*}
    U_T^*(t,x)=U^*(T-t,x) \ \ \ \text{where} \ \ \ U^*(t,x)=\frac{1}{(n-2 )t^2+\abs{x}^2}\begin{pmatrix}
        2 \sqrt{n-2} tx \\ (n-2)t^2-\abs{x}^2
    \end{pmatrix},
\end{align*}
for any blowup time $T > 0$. Their profile is given by
\begin{align}\label{corotationalprofile}
    \widetilde{u}^*(t,r)=\frac{2}{r}\arctan\left(\frac{r}{\sqrt{n-2}t}\right)
\end{align}
for $t \geq 0, r \geq 0$.
The point is that $U_T^*$ is smooth on the backward light cone of $(T,0)$ apart from the point $(T,0)$ itself where the gradient blows up. 
\subsection{Solutions after blowup.} 
The co-rotational profile $\widetilde{u}$ of a wave map solves
\begin{align*}
    \left(\partial_t^2 - \partial_r^2 - \frac{n+1}{r}\partial_r\right) \widetilde{u}(t,r) + (n-1)\frac{\sin\left(2r \widetilde{u}(t,r)\right)-2r \widetilde{u}(t,r)}{2r^3}=0,
\end{align*}
for $t \in \R, r \geq 0.$
We note that this is the radial counterpart of a semilinear wave equation on $\R^{1,n+2}$. Indeed, from now on let $d:=n+2$, and consider $u: \R \times \R^d \rightarrow \R$ given by $u(t,x)=\widetilde{u}(t,\abs{x})$. We get 
\begin{align}\label{geowaveeqorig}
    \left(\partial_t^2 - \Delta_x\right) u(t,x)+ (d-3)\frac{\sin(2\abs{x} u(t,x))-2\abs{x} u(t,x)}{2\abs{x}^3}=0.
\end{align}
By time-reversal symmetry of the equation, each sign in
\begin{align*}
    \widetilde{u}^*_{\pm}(t,r):=\widetilde{u}^*(\pm t,r) 
\end{align*}
yields a solution. While the blowup problem concerns the behaviour of $U^*_T$ as $t \rightarrow T^{-}$ (or equivalently of $\widetilde{u}^*$ as $t \rightarrow 0^{+}$) \cite{Donningergenblow,DonSchAich,CosDonGlo,CosDonXia,BieDonSch,DonWal23,DonWal25,donninger2025stableselfsimilarblowupcorotational,Glogicrestriction,donninger2026modestabilityselfsimilarwave,donninger2026blowupstabilitywavemaps}, we are instead interested in the stability of $\widetilde{u}^*$ for $t \rightarrow \infty$ (which corresponds to $t \rightarrow -\infty$ for $U^*_T$).

To avoid confusion, we emphasise that although $U^*$ extends to a smooth wave map on all of spacetime apart from the spacetime origin, there is no profile that reflects this behaviour. This is due to the co-rotational ansatz introducing a singularity on some 1-dimensional submanifold of spacetime. A natural candidate, found by exploiting arctangent identities, such as $\widetilde{u}_{\operatorname{cont}}^*(t,r)= \frac{4}{r} \arctan\left(\frac{r}{\sqrt{n-2}t + \sqrt{(n-2)t^2+ r^2}}\right)$, still exhibits singular behaviour as $t < 0$ and $r \rightarrow 0$.

Considering the solution only in the direction of time in which it does not blow up is of interest in its own right. Proving stability in a suitable sense indicates that it plays a significant role in the dynamics of supercritical wave maps. In the context of dynamics this particular solution is remarkable, since $u^*$ does not blow up for $t \geq 1$ and decays linearly as $t \rightarrow \infty$. In contrast, generic solutions to the free equation decay at rate $\simeq t^{-\frac{d-1}{2}}$.

\subsection{Perturbations}
While in the subcritical and critical regime a wider range of tools can be employed, in the supercritical case a perturbative approach appears to be the only currently viable method. We know from above that $u^*(t,x)=\frac{2}{\abs{x}}\arctan\left(\frac{\abs{x}}{\sqrt{d-4}t}\right)$ solves Eq.~\eqref{geowaveeqorig}.
The operation on the left-hand side of Eq.~\eqref{geowaveeqorig} can be reformulated into one that sustains self-similarity by considering $w(t,x):=t u(t,x)$. The equation for $w$ becomes
\begin{align}\label{geowaveeq}
    \overline{\square}w(t,x)+ (d-3) \frac{\sin\left(2 \frac{\abs{x}}{t}w(t,x)\right)-2\frac{\abs{x}}{t}w(t,x)}{2\left(\frac{\abs{x}}{t}\right)^3}=0,
\end{align}
where $\overline{\square}:=t^2 (\partial_t^2-\Delta_x)- 2t\partial_t+2$. The explicit solution $w^*(t,x):= t u^*(t,x)=2\frac{t}{\abs{x}}\arctan{\left(\frac{\abs{x}}{\sqrt{d-4}t}\right)}$ is now self-similar.

We linearise Eq.~\eqref{geowaveeq} around the explicit solution via $w=w^*+\phi$. Denote the nonlinearity by $F(t,x,z):= (d-3)\frac{\sin\left(2\frac{\abs{x}}{t} z\right)-2\frac{\abs{x}}{t} z}{2\left(\frac{\abs{x}}{t}\right)^3}$, $F'(t,x,z):= \partial_z F(t,x,z)$ and decompose the nonlinearity into linear parts and nonlinear remainder
\begin{align*}
    F(t,x,w^*(t,x)+\phi(t,x))=F(t,x,w^*(t,x))+F'(t,x,w^*(t,x))\phi(t,x)+\mathcal{N}(\phi)(t,x),
\end{align*}
where 
\begin{align*}
    \mathcal{N}(\phi)(t,x)&:=F(t,x,w^*(t,x)+\phi(t,x))-F(t,x,w^*(t,x))-F'(t,x,w^*(t,x))\phi(t,x).
\end{align*}
A short computation reveals
\begin{align*}
    \mathcal{V}(t,x):=F'(t,x,w^*(t,x))= \frac{-8(d-3)(d-4)t^4}{((d-4)t^2+\abs{x}^2)^2}.
\end{align*}
The fact that $w^*$ solves Eq.~\eqref{geowaveeq} yields the equation for the perturbation $\phi$
\begin{align}\label{geowaveeqdecomp}
    0=\left(\overline{\square}+\mathcal{V}(t,x)\right)\phi(t,x)+\mathcal{N}(\phi)(t,x).
\end{align}
\subsection{Coordinates and geometry} 
We introduce a hyperboloidal foliation that enables us to define function spaces tailored to the evolution by prescribing a certain behaviour at lightlike infinity. This turns out to be well compatible with the conformal symmetry of the linearised equation. 
\begin{definition}
    The \textit{forward hyperboloidal similarity coordinates (FHSC)} \cite[p. 11]{anilkelv} are given by the map
\begin{align*}
    \Psi: [0,\infty) \times \mathbb{B}^d &\rightarrow \mathbb{R}^{1,d},\\
    (s,y) &\mapsto \left(\frac{e^s}{1-\abs{y}^2},\frac{e^s y}{1-\abs{y}^2}\right).
\end{align*}
\end{definition}
Onto its image $D^+(\Sigma_0^d)$, $\Psi$ is a diffeomorphism with inverse $\Psi^{-1}(t,x)= (-\log(\frac{t}{t^2-\abs{x}^2}),\frac{x}{t})$. It is the composition $K \circ \chi$ of classical similarity coordinates 
\begin{align*}
    \chi: [0,\infty) \times \mathbb{B}^d &\rightarrow \Gamma,\\
    (\tau, \xi) &\mapsto (-e^{-\tau},e^{-\tau}\xi),
\end{align*}
where $\Gamma:=\left\{(t,x) \in \mathbb{R}^{1,d}: \abs{x}< \abs{t}, -1 \leq t < 0 \right\}$ is the past light cone of the origin truncated at $-1$ and the Kelvin transform
\begin{align*}
    K: \Gamma &\rightarrow D^+(\Sigma_0^d),\\
    (t,x) &\mapsto \left(\frac{-t}{t^2-\abs{x}^2},\frac{x}{t^2-\abs{x}^2}\right).
\end{align*}
Here $D^+(\Sigma_0^d)$ is the subset of the interior of the future light cone emerging from $\left(\frac{1}{2},0\right) \in \R^{1,d}$ foliated by the hyperboloidal surfaces $\Sigma_s^d := \{\Psi(s,y): y \in \B^d \}$ for $s \geq 0$. In fact, $D^+(\Sigma_0^d)$ represents the future domain of dependence of $\Sigma_0^d$. Note that $\Sigma_0^d$ asymptotically approaches the boundary of the light cone with vertex $\left(\frac{1}{2},0\right)$. We invite the reader to consult \cref{fhscpic} for an illustration of the coordinates.
\input{coordinatepic}
\begin{remark}
    One advantage of the foliation is that it allows for favourable energy estimates. Solutions to the free wave equation can propagate energy through lightlike infinity; hence, norms computed over hyperboloidal leaves of the foliation may decay in time. This is in contrast to Cartesian coordinates, where the energy is conserved. For a more detailed discussion about the intricacies of hyperboloidal coordinates and initial value problems, we refer to \cite{anilscrifix} and \cite{anilkelv}.
\end{remark}
\begin{definition}
    For $\ell \in \{1,3\}$ we consider the weight
    \begin{align*}
       W_\ell: \B^d &\rightarrow \R,\\
       y &\mapsto (1-|y|^2)^\frac{\ell-d}{2},
    \end{align*}
    and for every $s \geq 0$ the function space
    \begin{align*}
        \mathcal{C}_{\operatorname{rad}}^\infty(\Sigma_s^d, W_1):= \{f \mid W_1 f \circ \Psi(s,\cdot) \in C_{\operatorname{rad}}^\infty(\overline{\B^d})\} 
    \end{align*}
    and for every $k \in \N$ the weighted Sobolev norms for $\mathbf{f}= (f_1,f_2) \in \mathcal{C}_{\operatorname{rad}}^\infty(\Sigma_s^d, W_1)^2$ given by
    \begin{align*}
        \norm{\mathbf{f}}{\mathcal{H}^{k}(\Sigma_s^d)} = \sqrt{\norm{W_1 f_1 \circ \Psi(s,\cdot)}{H^k(\B^d)}^2 + \norm{W_1 f_2 \circ \Psi(s,\cdot)}{H^{k-1}(\B^d)}^2}.
    \end{align*}
\end{definition}
In Cartesian coordinates $W_\ell$ corresponds to a weight singular at lightlike infinity, namely $W_\ell \circ \Psi^{-1} (t,x)= (\frac{t^2}{t^2-\abs{x}^2})^\frac{d-\ell}{2}$.

\subsection{Main results}
Here we state the stability results. The implications are discussed afterwards.
\begin{theorem}\label{mainthmwavemaps}
    Let $n, k \in \N$ with
    \begin{align*}
        n \geq 3 \ \text{odd} \ \ \text{and} \ \ k > n/2 + 1.
    \end{align*}
    There exists $\varepsilon > 0$ such that for all smooth, co-rotational data $(F_1,F_2): \Sigma_0^n \rightarrow \emph{T}\S^n \subset \R^{n+1}\times \R^{n+1}$ of the form
    \begin{align*}
        F_1(t,x) = \begin{pmatrix}
            \sin(|x| (\widetilde{u}^*+\widetilde{f_1})(t,|x|)) \frac{x}{|x|} \\
            \cos(|x| (\widetilde{u}^*+\widetilde{f_1})(t,|x|)) 
        \end{pmatrix},
        F_2(t,x) = \begin{pmatrix}
            \cos(|x|(\widetilde{u}^*+\widetilde{f}_1)(t,|x|))(\pt_0 \widetilde{u}^*+\widetilde{f}_2)(t,|x|)x\\
            -|x| \sin(|x| (\widetilde{u}^*+\widetilde{f}_1)(t,|x|))(\pt_0 \widetilde{u}^*+\widetilde{f}_2)(t,|x|)
        \end{pmatrix}
    \end{align*}
    with $\widetilde{f}_1,\widetilde{f_2} \in C_{\operatorname{rad}}^\infty(\Sigma_0^1)$ which satisfy $\widetilde{f}_i(t,\abs{x})=f_i(t,x)$ for $i\in \{1,2\}$ with
    \begin{align*}
        \norm{(f_1,f_2)}{\mathcal{H}^k(\Sigma_0^{n+2})} \leq \varepsilon,
    \end{align*}
there exists a unique co-rotational $U \in C^\infty(D^+(\Sigma_0^n))$ that solves the initial value problem
\begin{align*}
    \begin{cases}
    \pt^\mu \pt_\mu U + (\pt^\mu U \cdot \pt_\mu U)U=0 \ \ &\text{in} \ D^+(\Sigma_0)\setminus \Sigma_0,\\
    U = F_1,\  \pt_0 U = F_2 &\text{in} \ \Sigma_0,
    \end{cases}
    \end{align*} 
    and whose profile $\widetilde{u}$ can be decomposed as $\widetilde{u}=\widetilde{u}^*+\widetilde{\phi}$ such that
    \begin{align*}
        \sup_{s \geq 0}e^{(1+\delta) s}\norm{(\phi,\omega \pt_0 \phi)}{\mathcal{H}^k\left(\Sigma_s^{n+2}\right)} \lesssim \norm{(f_1,f_2)}{\mathcal{H}^k\left(\Sigma_0^{n+2}\right)},
    \end{align*}
    where $\omega(t,x)= \frac{t^2-|x|^2}{t}$ and $\phi(t,x)=\widetilde{\phi}(t,\abs{x})$. Fix $x \in \R^n$ and consider $(t,x) \in D^+(\Sigma_0^n)$. We have
    \begin{align*}
        \left|U(t, x)- \begin{pmatrix}
            0 \\ 1
        \end{pmatrix}\right| \simeq t^{-1}
    \end{align*}
    for all $t >0$ such that $(t,x) \in D^+(\Sigma_0^n)$, where $\begin{pmatrix}
        0 \\ 1
    \end{pmatrix}$ denotes the north pole of $\S^n$.
\end{theorem}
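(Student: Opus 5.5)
The plan is to reduce \cref{mainthmwavemaps} to a stability statement for the semilinear equation \eqref{geowaveeqdecomp} on $\R^{1,d}$, $d=n+2$, written in the forward hyperboloidal similarity coordinates. A co-rotational $U$ solves the wave maps equation if and only if its profile $\widetilde u$ solves the radial equation, which lifts to \eqref{geowaveeqorig} for $u(t,x)=\widetilde u(t,\abs{x})$ on $\R^{1,d}$. With $w=tu$, $w=w^*+\phi$, this becomes \eqref{geowaveeqdecomp}. The data $(F_1,F_2)$ then correspond exactly to radial data $(f_1,f_2)$ for the perturbation, after multiplying by $t$ to pass from $u$ to $w$. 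That step is harmless on $\Sigma_0$, since there $t$ is comparable to the weight factor. Smoothness of $U$ follows from smoothness of $u$, because the co-rotational ansatz with $\sin(\abs{x}\widetilde u)x/\abs{x}$ is smooth whenever $\widetilde u$ is an even smooth function of $r$.

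Pulled back by $\Psi$ and conjugated with the weight $W_1$, \eqref{geowaveeqdecomp} becomes a first-order evolution $\partial_s\Phi=\mathbf L\Phi+\mathbf N(\Phi)$ on $\mathcal H^k_{\mathrm{rad}}(\B^d)$, with $\mathbf L=\mathbf L_0+\mathbf L'$. Here $\mathbf L_0$ is the conformally transformed free wave operator, and $\mathbf L'$ comes from the potential $\mathcal V\circ\Psi$, which is smooth and bounded on $\overline{\B^d}$ because $\mathcal V$ is self-similar.

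I would invoke the results of the sections on the free and linear evolution. These say that $\mathbf L_0$ generates a semigroup with decay $e^{-(1+\delta')s}$ on $\mathcal H^k$. They also say that $\mathbf L'$ is compact, so that $\mathbf L$ generates a semigroup whose spectrum in the right region consists of finitely many eigenvalues. Mode stability, meaning no eigenvalues with $\operatorname{Re}\lambda\ge -1-\delta$, is the input I expect to be established there via hypergeometric ODE analysis. I also expect the symmetry-induced modes (time translation and scaling) to be located in the decaying region in this forward-in-time setting. This is why, unlike the blowup problem, no modulation is needed. Together these give $\|\mathbf S(s)\|\lesssim e^{-(1+\delta)s}$.

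The nonlinear step is a fixed point in the space $\fixptspace{k}$ of functions with $\sup_s e^{(1+\delta)s}\|\Phi(s)\|_{\mathcal H^k}<\infty$, applied to the Duhamel formula. It uses $k>d/2$, so that $H^k$ is an algebra, together with a local Lipschitz bound for $\mathbf N$. Since $\mathcal N$ is at least quadratic and smooth in $\abs{x}z/t$, with $\abs{x}/t=\abs{y}<1$, $\mathbf N$ is a smooth function of $y$ and $\Phi$, and the bound follows. Uniqueness and smoothness come from standard persistence of regularity for smooth data.

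For the decay $\abs{U-(0,1)^T}\simeq t^{-1}$ at fixed $x$: $\abs{U-(0,1)^T}=2\abs{\sin(\abs{x}\widetilde u/2)}$ for $x\ne0$. At $x=0$ the quantity is $0$ or of order the derivative, so the limit version applies, meaning the derivative behaves like $\widetilde u(t,0)\simeq t^{-1}$. We have $u^*\simeq 1/t$ uniformly as $t\to\infty$ with $x$ fixed. The perturbation gives $\abs{\phi}=\abs{t\,(u-u^*)}$, and via the Sobolev embedding in $y$ we get $\abs{W_1\phi\circ\Psi}\lesssim e^{-(1+\delta)s}$. At fixed $x$ and large $t$ we have $e^{s}\simeq t$ and $W_1\simeq 1$, so $\abs{u-u^*}\lesssim t^{-2-\delta}$, which is negligible compared to $t^{-1}$. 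Choosing $\varepsilon$ small also handles bounded $t$.

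The main obstacle I expect is the spectral part. One must show that $\mathbf L$ has no eigenvalues with $\operatorname{Re}\lambda>-1-\delta$, which requires an explicit analysis of the radial eigenvalue ODE with connection at $\abs{y}=1$. One must also check that the decay rate $1+\delta$ of the free part survives in $\mathcal H^k$ for $k>d/2+1$.
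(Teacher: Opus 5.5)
Your outline follows the paper's architecture: lift to $d=n+2$, pass to $w=tu$ in FHSC, conjugate the free part onto the free semigroup of the blowup problem, treat $V$ as a compact perturbation, get decay from spectral information plus Gearhart--Pr\"uss--Greiner, and close a contraction using $k>d/2$. The genuine gap is the one non-routine step, the absence of unstable eigenvalues. You only posit it, and the route you anticipate is not the one that works. Because $V$ contributes singular points at $r=\pm i\sqrt{d-4}$, the radial eigenvalue equation is not hypergeometric, so there is no ready-made connection formula between $r=0$ and $r=1$; the paper never solves a connection problem.

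The missing idea is the following. The substitution $f=r^{\frac{1-d}{2}}(1-r^2)^{\frac{1-d-2\lambda}{4}}g$ turns the eigenvalue equation into a self-adjoint Sturm--Liouville problem $\tau g=\mu g$ on $(0,1)$, limit-point at both ends, with spectral parameter $\mu=-(\lambda+1)(\lambda+d-4)$. For $\operatorname{Re}\lambda>-1$ (eigenvalues of the $w$-problem), reality of $\mu$ forces $\lambda\in\mathbb{R}$, hence $\mu<0$. The symmetry-generated function $g_0(r)=\frac{r^{(d-1)/2}(1-r^2)^{(7-d)/4}}{d-4+r^2}$ solves $\tau g_0=0$ with no zeros in $(0,1)$. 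Sturm comparison (Picone's identity) and oscillation theory then exclude eigenvalues $\mu<0$. This is exactly where your correct expectation enters, namely that the symmetry modes lie at $\operatorname{Re}\lambda\le -1$ in this forward problem. The free decay you list as a second worry is immediate: $e^{(d-2)s}W_3$ conjugates $\widehat{\square}$ onto the free operator of the blowup problem, whose known bound gives $e^{(\max\{d/2-k,\epsilon'\}-(d-2))s}$ for any $0<\epsilon'<1/2$.

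Your decay bookkeeping is also off by a factor of $t$. The weight $W_1$ belongs to $u-u^*$, whereas the perturbation $t(u-u^*)$ of $w$ carries $W=W_3$, and $W_1\,(u-u^*)\circ\Psi=e^{-s}\,W_3\,(t(u-u^*))\circ\Psi$. The rate $e^{-(1+\delta)s}$ holds for the left side, which means only $e^{-\delta s}$ for $W_3\,t(u-u^*)$. Applying it to $W_1\,t(u-u^*)$ counts $t^{-1}$ twice. Your resulting bound $|u-u^*|\lesssim t^{-2-\delta}$ is false for $n=3$. When $d=5$ the time-translation mode $e^{-s}(1-|y|^2)/(1+|y|^2)$ lies in the space, i.e.\ $\lambda=-1$ is an eigenvalue of the $w$-problem. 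So $u^*(t+T,x)$ is an admissible small perturbation with $|u-u^*|\simeq |T|\,t^{-2}$ at fixed $x$. For the same reason, ``decay $e^{-(1+\delta)s}$'' and ``no eigenvalues with $\operatorname{Re}\lambda\ge -1-\delta$'' are correct only in the $u$-picture, not for the $w$-equation you cite.

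The correct bound $|u-u^*|\lesssim t^{-1-\delta}$ still gives the conclusion. Your remark that $|U(t,0)-(0,1)^T|\equiv 0$ is right: the two-sided bound can only hold for fixed $x\neq 0$, with $|x|$-dependent constants, and the statement does not exclude $x=0$. Two smaller points remain. The threshold is $k>d/2=n/2+1$, not $k>d/2+1$. Uniqueness among all smooth co-rotational solutions does not follow from the contraction, which only gives uniqueness in the decaying class; the paper proves it by a local energy estimate on the FHSC backward cones $D_s=\mathbb{B}^d_{1-e^{s-s_0}}(e^{s-s_0}y_0)$.
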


This theorem is a direct consequence of the following result for the semilinear wave equation Eq.~\eqref{geowaveeqorig} induced by the profile of $U$. From now on, we abbreviate $\Sigma_s^d$ as $\Sigma_s$.
\begin{theorem}\label{mainthm}
    Let $d,k \in \N$ with
    \begin{align*}
        d \geq 5 \ \text{odd} \ \ \text{and} \ \ k > d/2.
    \end{align*}
    There exists $\varepsilon >0$ such that for every $(f_1, f_2) \in \mathcal{C}_{\operatorname{rad}}^\infty(\Sigma_0,W_1)^2$ with $\norm{(f_1,f_2)}{\mathcal{H}^k\left(\Sigma_0\right)} \leq \varepsilon$, the hyperboloidal initial value problem
    \begin{align*}
        \begin{cases}
            0 = \square u(t,x)+ (d-3)\frac{\sin\left(2\abs{x}u(t,x)\right)-2\abs{x}u(t,x)}{2\abs{x}^3} \ \ &\text{in} \ D^+(\Sigma_0)\setminus \Sigma_0,\\
            u = u^*+f_1 &\text{in} \ \Sigma_0,\\
            \pt_0 u =\pt_0 u^*+f_2 &\text{in} \ \Sigma_0,
        \end{cases}
    \end{align*}
    has a unique solution $u \in C^\infty(D^{+}(\Sigma_0))$. The solution can be decomposed as $u=u^* + \phi$, where $\phi$ has the following properties:
    \begin{itemize}
        \item $\phi \circ \Psi \in C^\infty([0,\infty) \times \overline{\B^d}; W_1)$,
        \item $\phi \circ \Psi(s,\cdot)$ is radial for all $s \geq 0$.
    \end{itemize}
    The solution is attracted by $u^*$ in the sense that there exists $\delta >0$ such that
    \begin{align*}
        \sup_{s \geq 0}\left(e^{(1+\delta) s}\left(\norm{\phi}{H^k\left(\Sigma_s\right)}+\norm{\omega \pt_0 \phi}{H^{k-1}\left(\Sigma_s\right)}\right)\right) \lesssim \norm{f_1}{H^k\left(\Sigma_0\right)}+ \norm{f_2}{H^{k-1}\left(\Sigma_0\right)}.
    \end{align*}
    In particular, for fixed $x \in \R^d$, we have
    \begin{align*}
        u(t,x) \simeq t^{-1}
    \end{align*}
    for all $t >0$ such that $(t,x) \in D^+(\Sigma_0)$.
\end{theorem}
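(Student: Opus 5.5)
We plan to prove the theorem by a perturbative argument in the forward hyperboloidal similarity coordinates. We rewrite Eq.~\eqref{geowaveeqdecomp} for the perturbation $\phi=w-w^*$, with $w=tu$, in the variables $(s,y)=\Psi^{-1}(t,x)$. Because $\overline{\square}$ is compatible with the Kelvin transform $K$ and the classical similarity coordinates $\chi$, the linear part $\overline{\square}+\mathcal{V}$ should become a first-order system $\partial_s \Phi = (\mathbf{L}_0+\mathbf{L}')\Phi + \mathbf{N}(\Phi)$. Here $\Phi(s)=(W_1\phi\circ\Psi(s,\cdot),\ W_1(\omega\partial_0\phi)\circ\Psi(s,\cdot))$ up to a harmless rescaling, $\mathbf{L}_0$ is the free wave operator in these coordinates, and $\mathbf{L}'$ is multiplication by the potential $\mathcal{V}\circ\Psi$. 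The potential is then a function of $y=x/t$ alone, namely $-8(d-3)(d-4)/((d-4)+|y|^2)^2$, so the linearised problem is autonomous in $s$. This is the reason for working with $w$ instead of $u$.

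The first step treats the free evolution. On the radial subspace of $H^k\times H^{k-1}(\B^d)$, weighted by $W_1$, we want $\mathbf{L}_0$ to generate a semigroup with $\|\mathbf{S}_0(s)\|\lesssim e^{-\mu s}$ for some $\mu$ exceeding $1$. We intend to prove this with an equivalent inner product and a Lumer--Phillips dissipativity estimate, using $k>d/2$ and $d$ odd so that the weighted energy functionals can be integrated by parts near $|y|=1$.

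The second step treats the linearised evolution.
\begin{itemize}
\item The multiplication operator $\mathbf{L}'$ is compact, so the spectrum of $\mathbf{L}=\mathbf{L}_0+\mathbf{L}'$ in $\{\operatorname{Re}\lambda>-\mu\}$ consists of isolated eigenvalues of finite multiplicity.
\item The crucial mode-stability claim is that $\mathbf{L}$ has no eigenvalues with $\operatorname{Re}\lambda\ge -(1+\delta')$ for some $\delta'>0$. The eigenvalue equation is a radial ODE in $\rho=|y|$ with a hypergeometric-type structure, because the potential is rational, and we would try to solve it explicitly in terms of ${}_2F_1$.
\item Unlike the blowup direction, the time-translation symmetry of $u^*$ (the parameter $T$) is not compatible with the forward foliation. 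We therefore expect no symmetry eigenvalue in the closed right half-plane and hence no modulation.
\end{itemize}
We regard excluding eigenvalues in this region as the main obstacle. We would first attempt a supersymmetric or factorisation removal of the potential, since $u^*$ comes from the explicit arctan profile, and otherwise use connection formulas for hypergeometric functions. Once mode stability holds, we would combine it with resolvent bounds for large $|\operatorname{Im}\lambda|$, obtained by perturbing off the free resolvent, and apply the Gearhart--Prüss theorem. This gives $\|\mathbf{S}(s)\|\lesssim e^{-(1+\delta)s}$.

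The third step is the nonlinear argument. We show that $\mathbf{N}$ is locally Lipschitz and quadratically small on $H^k\times H^{k-1}$; this uses the Banach algebra property for $k>d/2$ and the smoothness of $z\mapsto(\sin(2\rho z)-2\rho z)/\rho^3$. The Duhamel map $\Phi(s)=\mathbf{S}(s)\Phi_0+\int_0^s\mathbf{S}(s-s')\mathbf{N}(\Phi(s'))ds'$ is then a contraction on the space where $\sup_s e^{(1+\delta)s}\|\Phi(s)\|\le C\varepsilon$. No correction term is needed because there are no unstable modes.

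The fourth step recovers the stated properties of the solution.
\begin{itemize}
\item Smoothness follows by persistence of regularity for smooth data, and uniqueness follows from the fixed point together with local well-posedness.
\item For $u=u^*+\phi/t$ we have $u^*(t,x)\simeq t^{-1}$ at fixed $x$. Along such points $s\sim\log t$ and $W_1\simeq 1$, so Sobolev embedding gives $|\phi/t|\lesssim \varepsilon\, t^{-1}e^{-(1+\delta)s}$, which is much smaller than $t^{-1}$. This yields $u\simeq t^{-1}$.
\end{itemize}
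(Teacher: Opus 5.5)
\paragraph{Mode stability is left unproved.} The decisive step of your argument is not carried out: you list strategies you would try and call mode stability the main obstacle. The first of these strategies does not apply as stated. In $x=|y|^2$ the eigenvalue equation has regular singular points at $x=0,1,\infty$ and also at $x=-(d-4)$, the last one created by $V$ with exponents $-1,2$. So the equation is of Heun type, not hypergeometric. Even after removing the potential you would still face a connection problem for complex $\lambda$.

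The missing idea is that no explicit solution is needed. Let $\widetilde f$ be the radial profile of $Wf_1$, with $W=(1-|y|^2)^{\frac{3-d}{2}}$. The substitution $\widetilde f=hg$ with $h(r)=r^{\frac{1-d}{2}}(1-r^2)^{\frac{1-d-2\lambda}{4}}$ brings the eigenvalue equation into Liouville form
\[
-(1-r^2)^2g''+Q(r)\,g=\mu\, g,\qquad \mu:=-(\lambda+1)(\lambda+d-4),
\]
where $Q$ does not depend on $\lambda$. This operator is limit point at both endpoints, hence self-adjoint in $L^2((0,1),(1-r^2)^{-2}\,\mathrm{d}r)$. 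An eigenvalue $\lambda$ with $\operatorname{Re}\lambda>-1$ would therefore force $\lambda\in\mathbb{R}$ and $\mu<0$.

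You dismiss the time translation of $u^*$ as incompatible with the forward foliation, but it does generate a mode. Indeed, $(t\,\partial_t u^*)\circ\Psi$ is a multiple of $e^{-s}\frac{1-|y|^2}{d-4+|y|^2}$, so $\lambda=-1$. This mode and its partner at $\lambda=4-d$ both become the nodeless zero-energy solution
\[
g_0(r)=\frac{r^{(d-1)/2}(1-r^2)^{(7-d)/4}}{d-4+r^2}.
\]
Sturm comparison and relative oscillation theory then exclude every $\mu<0$, and no connection problem ever arises. Your factorisation idea is the operator form of the same fact: $g_0>0$ gives a ground-state representation showing the operator is nonnegative. It has to be used for positivity, not for solving the equation.

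\paragraph{The decay rate is miscounted, and the claimed linear decay is false.} The perturbation $\phi=w-w^*$ naturally lives in $\mathcal{H}^k_{\mathrm{rad}}(\mathbb{B}^d;\mathbf{W})$ with $W=W_3$. This is exactly what the substitution $f=e^{(d-2)s}(1-|y|^2)^{\frac{3-d}{2}}w$ dictates, since it turns $\widehat{\square}$ into the free operator of the blowup problem.

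In that space the bound $\|\mathbf{S}(s)\|\lesssim e^{-(1+\delta)s}$ fails. For $d=5$ the mode above satisfies $W\frac{1-|y|^2}{1+|y|^2}=\frac{1}{1+|y|^2}$, so $\lambda=-1$ is an eigenvalue. The autonomous problem only gives $e^{-\delta s}$. The extra factor $e^{-s}$ in the theorem comes from the identity $W_1\,(\phi/t)\circ\Psi=e^{-s}\,W_3\,\phi\circ\Psi$ on $\Sigma_s$.

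So $W_1$ is the weight for $u$, not for $w$. Your final bound $|\phi/t|\lesssim\varepsilon t^{-1}e^{-(1+\delta)s}$ counts the factor $t^{-1}$ twice. This is harmless for the conclusion $u\simeq t^{-1}$, which only needs $|u-u^*|\ll t^{-1}$.

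\paragraph{Remaining steps.} The same substitution also lets you skip your first step. The free semigroup and its growth bound $\max\{\frac d2-k,\varepsilon\}-(d-2)$ are imported from the blowup problem rather than obtained from a new dissipativity estimate. Oddness of $d$ enters only through the smoothness of $W^{-1}=(1-|y|^2)^{\frac{d-3}{2}}$.

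Finally, the fixed point gives uniqueness only among decaying solutions in the Hilbert space. Uniqueness in $C^\infty(D^+(\Sigma_0))$, where nothing is assumed at $|y|=1$, needs a separate argument. The paper uses a local energy estimate on the FHSC domains of dependence $\mathbb{B}^d_{1-e^{s-s_0}}(e^{s-s_0}y_0)$.
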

\subsection{Discussion}
\begin{remark}
     The stability result suggests the following (highly simplified) picture of the dynamics. We order the solution space according to the asymptotic behaviour as $t \rightarrow \infty$ of the solutions. Starting from the trivial solution, there are global solutions that decay to $0$ as $t \rightarrow \infty$. Then, numerical evidence (at least for $3 \leq n \leq 6$) indicates that there is the ``threshold'' solution given by the wave map corresponding to the first excited state; see \cite{bizonchmajtabor} and \cite{Bizonthreshold3} for $n=3$ and \cite{biernatbizonmaliborski} for $4 \leq n \leq 6$. Beyond this lies the blowup solution given by $U^*_T$, which is believed to describe generic blowup. Zooming in on the global solutions, first one finds solutions with dispersive decay and then solutions that decay more slowly. This is where the time-reversed blowup solution considered in our analysis sits. In general we expect that this is not be the only time-reversed state that plays a role there. Apart from our result concerning $U^*$ the rest remains conjectural.
     
     Note, however, that these dynamics cannot be ordered by the size of the data, as is evidenced by the observation that only the sign in $u^*_{\pm}$ dictates whether the solution blows up or exists for all positive times. This does not change the size of the data as it only changes the sign of the second initial datum.
\end{remark}
\subsection{Proof strategy}    
If $w_\Psi \in C^\infty([0,\infty) \times \mathbb{B}^d)$ is related to $w \in C^\infty(D^+(\Sigma_0))$ via $w_\Psi= w \circ \Psi$, then Eq.~\eqref{geowaveeq} transforms into
\begin{align}\label{geowaveeqsesim}
    0=\widehat{\square} w_\Psi (s,y)+ F(\Psi(s,y),w_\Psi (s,y)),
\end{align}
where $\widehat{\square}$ is such that $\widehat{\square} (\cdot \circ \Psi)=\overline{\square} \cdot \circ \Psi$, i.e.
\begin{align*}
    &\widehat{\square} w_\Psi(s,y)= \left(\partial_s^2 + 2 \partial_s \Lambda_y + \left(y^k y^\ell - \delta^{k\ell}\right)\partial_k \partial_\ell
    + \frac{2d-3- 3\abs{y}^2}{1- \abs{y}^2}\partial_s + 4\Lambda_y +2 \right)w_\Psi(s,y),
\end{align*}
where $\Lambda_y:= y^k\partial_{y^k}$. Also
\begin{align*}
    F(\Psi (s,y),w_\Psi (s,y))=(d-3) \frac{\sin \left(2\abs{y}w_\Psi(s,y)\right)-2 \abs{y}w_\Psi(s,y)}{2\abs{y}^3}.
\end{align*}
According to the decomposition in Eq.~\eqref{geowaveeqdecomp} with $\phi_\Psi:= \phi \circ \Psi, w^*_\Psi:= w^* \circ \Psi$ we get 
\begin{align}\label{geowaveeqsesimdecomp}
    0=\widehat{\square} \phi_\Psi(s,y)+ F'(\Psi(s,y), w^*_\Psi (s,y)) \phi_\Psi(s,y)+ \mathcal{N}_\Psi(\phi_ \Psi)(s,y),
\end{align}
where 
\begin{align*}
    \mathcal{N}_\psi(\phi_\Psi)(s,y)=F(\Psi(s,y),(w^*_\Psi+\phi_\Psi)(s,y))-F(\Psi(s,y),w^*_\Psi(s,y))-F'(\Psi(s,y),w^*_\Psi(s,y))\phi_\Psi(s,y).
\end{align*}
Note that $w_\Psi^*(s,y)=\frac{2}{\abs{y}}\arctan\left(\frac{\abs{y}}{\sqrt{d-4}}\right)$ is independent of time $s$. We abbreviate by $w^*_\Psi(y):=w_\Psi^*(s,y)$ for all $s \geq 0$. Explicitly, Eq.~\eqref{geowaveeqsesimdecomp} is
\begin{align}\label{fulleq}
    0=\left(\widehat{\square}+V(y)\right) \phi_\Psi(s,y)+ N(\phi_\Psi(s,\cdot))(y),
\end{align}
where
\begin{align*}
    \begin{split}
        &\widehat{\square}v(s,y)=\left[\partial_s^2+2 \partial_s \Lambda_y+ (y^k y^\ell- \delta^{k\ell}) \partial_k \partial_\ell + \frac{2d-3-3\abs{y}^2}{1-\abs{y}^2}\partial_s +4\Lambda_y+2\right] v(s,y),\\
        &V(y)v(s,y):=-\frac{8(d-3)(d-4)}{(d-4+\abs{y}^2)^2}v(s,y),\\
        &N(v(s,\cdot))(y):=(d-3)\frac{\sin\left(2 \abs{y}(w^*_\Psi(y)+v(s,y))\right)-2\abs{y}v(s,y)-\sin\left(2 \abs{y} w^*_\Psi(y)\right)}{2\abs{y}^3}-V(y)v(s,y).
    \end{split}
\end{align*}
Considering the abstract Cauchy problem induced by Eq.~\eqref{fulleq} we prove the following result.

Set $W:= W_3$. We introduce the space $\hilspacewrad{k}$ which is the completion of $\cinftyradw$ with respect to the norm given by $\norm{\mathbf{f}}{\hilspacew{k}}^2= \norm{\mathbf{W}\mathbf{f}}{\hilspace{k}}^2= \norm{W f_1}{\sobspace{k}}^2+ \norm{W f_2}{\sobspace{k-1}}^2.$
\begin{theorem}\label{mainthmfhsc}
    Let $d,k$ be as in \Cref{mainthm}. There exists $\varepsilon >0$ such that there exists a unique mild solution $\boldsymbol{\phi} \in C\left([0,\infty), \hilspacewrad{k}\right)$ to the abstract Cauchy problem (ACP)
    \begin{align*}
        \begin{cases}
            \pt_s \boldsymbol{\phi}(s)&=\mathbf{L}\boldsymbol{\phi}(s)+ \mathbf{N}(\boldsymbol{\phi}(s)) \ \text{for all} \ s >0,\\
            \boldsymbol{\phi}(0)&= \mathbf{f},
        \end{cases}
    \end{align*}
    for all $\mathbf{f} \in \hilspacewrad{k}$ with $\norm{\mathbf{f}}{\hilspacew{k}} \leq \varepsilon$. Additionally, we have the following decay estimate. There exists $\delta > 0$ such that
    \begin{align*}
        \sup_{s \geq 0} e^{\delta s}\norm{\boldsymbol{\phi}(s)}{\hilspacew{k}} \lesssim \norm{\mathbf{f}}{\hilspacew{k}}.
    \end{align*}
    Assuming additionally that $\mathbf{f} \in \mathcal{D}(\mathbf{L})$, $\boldsymbol{\phi} \in C\left([0,\infty), \hilspacewrad{k}\right) \cap C^1\left((0,\infty), \hilspacewrad{k}\right)$ is the unique classical solution to (ACP).
\end{theorem}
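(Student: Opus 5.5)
We would prove \Cref{mainthmfhsc} by the standard semigroup-plus-fixed-point scheme, splitting it into a linear and a nonlinear part. First we write Eq.~\eqref{fulleq} as a first-order system in $\boldsymbol{\phi}=(\phi_\Psi,\pt_s\phi_\Psi+\Lambda_y\phi_\Psi+c(y)\phi_\Psi)$, with $c$ chosen so that the free part $\mathbf{L}_0$ has a clean form and $\mathbf{L}=\mathbf{L}_0+\mathbf{L}'$, where $\mathbf{L}'\boldsymbol{\phi}=(0,-V\phi_1)$.

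\textbf{Free evolution.} For $\mathbf{L}_0$ we would use weighted energy estimates in $\hilspacewrad{k}$, adapted to the weight $W=W_3$ (equivalently, conjugating by $\mathbf{W}$). The goal is to show that $\mathbf{L}_0$ is closable and densely defined on $\cinftyradw$, and that its closure generates a semigroup $\mathbf{S}_0(s)$ with $\norm{\mathbf{S}_0(s)}{}\lesssim e^{-\mu s}$ for some $\mu>0$. This requires $\operatorname{Re}(\mathbf{L}_0\mathbf{f}\mid\mathbf{f})\le-\mu\norm{\mathbf{f}}{}^2$ in an equivalent inner product, together with the range condition for $\lambda-\mathbf{L}_0$. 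The range condition reduces to solving a radial ODE, which is a hypergeometric-type equation after the conformal (Kelvin) change of variables. Lumer--Phillips then gives the semigroup.

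\textbf{Linearised evolution.} Since $V$ is smooth on $\overline{\B^d}$, the perturbation $\mathbf{L}'$ is compact on $\hilspacewrad{k}$ (Rellich, since it gains one derivative), so $\mathbf{L}$ generates $\mathbf{S}(s)$. Because $\mathbf{L}'$ is compact, the spectrum of $\mathbf{L}$ in $\{\operatorname{Re}\lambda>-\mu\}$ consists only of isolated eigenvalues of finite multiplicity. The decisive step, and the one we expect to be the main obstacle, is mode stability: showing there is no eigenvalue with $\operatorname{Re}\lambda\ge0$. Unlike the blowup direction, no symmetry mode should appear here, since time translation and scaling modes are not instabilities in the forward hyperboloidal frame. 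The eigenvalue equation becomes a radial second-order ODE with four regular singular points (at $\abs{y}^2=0,1$, $-(d-4)$, $\infty$), i.e.\ a Heun-type equation. We would first try to transform it to a supersymmetric or hypergeometric form using the explicit $w^*$, factoring out the ground-state structure, and otherwise use a quasi-solution/Frobenius-coefficient recurrence argument as in the blowup mode stability papers, showing that regularity at both $0$ and $1$ forces the solution to vanish when $\operatorname{Re}\lambda\ge0$.

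\textbf{Resolvent bounds and nonlinear argument.} Having excluded unstable modes, we still need a uniform resolvent bound for large $\abs{\operatorname{Im}\lambda}$ with $\operatorname{Re}\lambda\ge-\delta$, obtained from a Neumann series in $\mathbf{L}'\mathbf{R}_{\mathbf{L}_0}(\lambda)$. Gearhart--Prüss then yields $\norm{\mathbf{S}(s)}{}\lesssim e^{-\delta s}$. For the nonlinear step, $N$ is a smooth function of $\abs{y}$ and $v$ (the quotient by $\abs{y}^3$ is removable). Since $k>d/2$ makes $H^k$ an algebra, a Moser-type estimate in the weighted space gives $\norm{\mathbf{N}(\mathbf{u})-\mathbf{N}(\mathbf{v})}{}\lesssim(\norm{\mathbf{u}}{}+\norm{\mathbf{v}}{})\norm{\mathbf{u}-\mathbf{v}}{}$ on small balls, provided the weight $W$ interacts compatibly with the quadratic vanishing. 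We then run a contraction on $\{\boldsymbol{\phi}\in C([0,\infty),\hilspacewrad{k}):\sup_s e^{\delta s}\norm{\boldsymbol{\phi}(s)}{}\le C\varepsilon\}$ for the Duhamel map $\mathbf{S}(s)\mathbf{f}+\int_0^s\mathbf{S}(s-s')\mathbf{N}(\boldsymbol{\phi}(s'))ds'$. This gives existence, uniqueness and decay. Finally, for $\mathbf{f}\in\mathcal{D}(\mathbf{L})$, the local Lipschitz continuity of $\mathbf{N}$ together with standard semigroup regularity (e.g.\ Pazy's theorem for Lipschitz perturbations) upgrades the mild solution to a classical one.
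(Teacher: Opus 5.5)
Your architecture agrees with the paper's: compactness of $\mathbf{L}'$, a Neumann series in $\mathbf{L}'\mathbf{R}(\lambda,\overline{\mathbf{L}_0})$ for large $|\lambda|$, the analytic Fredholm theorem, Gearhart--Pr\"uss--Greiner, and a contraction for the Duhamel map in an exponentially weighted space. For the free flow you plan to prove dissipativity and the range condition by hand. The paper gets this for free: the substitution $f=e^{(d-2)s}Ww$ gives the exact conjugacy $\mathbf{L}_0=\mathbf{W}^{-1}(\mathbf{L}_0^*-(d-2)\mathbf{I})\mathbf{W}$ with the free operator of the associated blowup problem. Generation and the bound $e^{(\omega_{d,k,\varepsilon}-(d-2))s}$ are therefore inherited from the known result. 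That difference is harmless. The genuine gap is the step you yourself call decisive. For mode stability you offer only a menu (a supersymmetric or hypergeometric reduction, or a quasi-solution/Frobenius recursion for a Heun-type equation) and no argument. Your remark that no symmetry mode appears also points away from the ingredient that actually closes the step.

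Here is what the paper does. An eigenfunction with $\operatorname{Re}\lambda>-1$ yields a solution of the radial ODE for the profile of $Wf_1$. This solution is smooth on $[0,1]$ and $\simeq 1$ at both ends, with Frobenius exponents $\{0,2-d\}$ at $0$ and $\{0,-\lambda-\frac{d-3}{2}\}$ at $1$.

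The Liouville substitution $f=hg$, $h(r)=r^{\frac{1-d}{2}}(1-r^2)^{\frac{1-d-2\lambda}{4}}$, turns the ODE into $-(1-r^2)^2g''+Q(r)g=\mu g$. Here $Q$ is independent of $\lambda$ and $\mu=-(\lambda+1)(\lambda+d-4)$. This operator is self-adjoint in $L^2((0,1),(1-r^2)^{-2}\,\mathrm{d}r)$, being limit-point at both ends, and it contains the transformed eigenfunction. Hence $\mu\in\mathbb{R}$, and for $\operatorname{Re}\lambda>-1$ this forces $\lambda\in\mathbb{R}$ and $\mu<0$.

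The symmetry-induced solutions are not absent; they sit in the stable region, at $\lambda=4-d$ and $\lambda=-1$. These come from time translation and its image under the conformal involution; for instance, physical time translation of $u^*$ gives $e^{-s}\frac{1-|y|^2}{d-4+|y|^2}$. Note that $w^*_\Psi$ itself is static, so $s$-translation yields nothing. Both solutions transform into the explicit $\mu=0$ solution $g_0(r)=\frac{r^{(d-1)/2}(1-r^2)^{(7-d)/4}}{d-4+r^2}$, which has no zeros on $(0,1)$. Sturm comparison and oscillation theory then rule out every eigenvalue $\mu<0$, hence every eigenvalue with $\operatorname{Re}\lambda>-1$. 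Your supersymmetric option would land exactly here, as a ground-state factorization through $g_0$, but only once $g_0$ is identified. No Heun connection problem has to be solved.

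Two smaller points. First, the weight compatibility you leave open in the nonlinear estimate is immediate. For odd $d$, $1/W=(1-|y|^2)^{(d-3)/2}$ is a polynomial, so $\|f\|_{H^k}\lesssim\|Wf\|_{H^k}$ and $W$ can be placed on a single factor. Second, Pazy's theorem for Lipschitz nonlinearities, as stated, yields only a strong solution. The paper instead proves that $\mathbf{N}$ is continuously Fr\'echet differentiable and uses Pazy's $C^1$ result to obtain a classical solution. Also, uniqueness outside your contraction ball needs a short Gr\"onwall argument with the local Lipschitz bound.
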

We explain the ingredients of \Cref{mainthmfhsc}. Simultaneously this outlines the paper.
\begin{itemize}
    \item (ACP) is constructed such that $\mathbf{L}= \mathbf{L}_0+ \mathbf{L}'$ where $\mathbf{L}_0$ generates the free wave flow (corresponding to $\widehat{\square}$) and $\mathbf{L}'$ is the operator analogue of the potential arising from the linearisation of the nonlinearity around the explicit solution (corresponding to $V$). $\mathbf{N}$ is the remaining nonlinear term (corresponding to $N$).
    \item The main point is that we can prove decay estimates for the flow generated by $\mathbf{L}_0$ by connecting it to a related free wave problem via the conformal involution. This correspondence leads to the introduction of weighted function spaces in which the decay is evident. The reciprocal of the singular spatial weight induced by the conformal inversion is smooth only in the case of odd dimension, which is the technical reason why we restrict ourselves to this assumption.
    \item The introduction of $\mathbf{L}'$ to consider the flow generated by $\mathbf{L}$ then poses no real obstacle anymore which is surprising since usually this is the difficult part in the semigroup argument. After gaining deeper insight into the spectral structure of $\mathbf{L}$ we find that only isolated eigenvalues with non-negative real part could hinder the decay of the linearised wave flow. In the related blowup problem the following spectral analysis is divided into two regions. On the one hand the region between the imaginary axis and the eigenvalue of the mode induced by time-translation symmetry (which has positive real part in the related blowup problem) and on the other hand the half-plane right of that eigenvalue. The half-plane right of the mode eigenvalue is handled by a standard Sturm-Liouville argument. The region left of it translates to solving a \textit{connection problem} for which there is no general strategy known yet and usually corresponds to the difficult part in the analysis. In our case however, the eigenvalue induced by time-translation symmetry lies left of the imaginary axis (which is due to time-translation having no effect on the asymptotics) which means that the Sturm-Liouville argument suffices to show nonexistence of eigenvalues with nonnegative real part.
    \item Then the full nonlinear problem is treated by a standard fixed-point argument using the decay from the linearised flow.
    \item The last section covers the implications of \Cref{mainthmfhsc}, most importantly its connection to \Cref{mainthm}.
\end{itemize}

\subsection{Related results}
As one of the prototypical examples of a geometric wave equation, the wave maps equation has attracted a lot of attention over the last few decades. Since the literature is vast and impossible to review in its entirety, we restrict ourselves to long-time dynamics of related PDE mostly in connection with self-similar solutions and mention blowup dynamics in passing. For an elaborate exposition of the theory of dispersive PDE we refer to \cite{klainermansurvey,taobook}, for an exposition of wave maps in particular \cite{Kriegersurvey}, and the more recent \cite{gebabook}. 

The existence of a self-similar blowup solution to the supercritical wave maps equation was proved in \cite{shatahexofblowup} and found in closed form in \cite{closedformn3} for $n=3$ and in \cite{bizonbiernat} for $n \geq 4$. The theoretical problem concerning large data which do not lead to blowup is mostly uncharted waters for supercritical wave maps. A notable exception in the case of one-equivariant (or co-rotational) wave maps is \cite{kriegerconstr} where, following the procedure initiated for the related problem for the nonlinear wave equation with power nonlinearity (NLW) \cite{kriegerconstrpower}, smooth global solutions are constructed which are stable under certain perturbations. The stability of the blowup solution $u^*$ after the blowup for $t \rightarrow \infty$ inside the light cone emerging from the singularity is however not included in this project since the construction assumes a smallness condition along the time axis that is not satisfied by the time-reversed blowup solution. Another avenue was taken in \cite{germainbesov}, where the evolution is tracked in Besov spaces which accommodate the singular behaviour of the blowup solution. Numerically, the threshold separating blowup solutions from global ones, which corresponds to considering the first excited state, has been the content of \cite{biernatbizonmaliborski} in dimensions $3 \leq n \leq 6$. 

For the 3-dimensional cubic wave equation the hyperboloidal stability problem after blowup in the case of radial symmetry has been the content of \cite{bizonanil} numerically, and of \cite{anilkelv} theoretically, for results outside of radial symmetry see \cite{BurtscherDonninger} and for the initial value problem in the classical sense \cite{duyckaerts24}. The central idea in \cite{bizonanil} and \cite{anilkelv} was to exploit the conformal symmetry of the (linearised) equation to connect the forward stability to the blowup stability. Although in case of the cubic NLW the whole equation enjoys conformal symmetry, the symmetry of the linearised equation suffices for the argument. We take a similar approach. A difference is that our equation is translated into an at least $5$-dimensional wave equation, which leads to the introduction of dimension-dependent weights singular at null infinity in contrast to the $3$-dimensional case of the cubic NLW. At the same time this leads to a dimension-induced decay-in-time property. As a result, in this paper the nonlinear stability is of codimension $0$ in contrast to the cubic wave equation case where it is stated as a codimension $4$ stability. The stability in \cite{anilkelv} is stated under perturbations such that the full solution is in the same space in which it is perturbed in. A consequence and main difference of the dimension-induced singular weights is that the function space in which the perturbations are tracked does not include the explicit blowup solution itself.

\subsection{Notation}
The sets of natural, real and complex numbers are denoted by $\N$, $\R$, $\C$ respectively. The $d$-dimensional open ball of radius $1$ is given by $\B^d= \{x\in \R^d \mid \|x\| < 1\}$.

If $a_{\iota}, b_{\iota} \geq 0$ are nonnegative real numbers indexed by the parameter $\iota \in I$, we define the relation $a_{\iota} \lesssim b_{\iota}$ if there exists a constant $C > 0$ such that $a_\iota \leq C b_\iota$ uniformly for all $\iota \in I$. Accordingly, $a_\iota \simeq b_\iota$ iff $a_\iota \lesssim b_\iota$ and $b_\iota \lesssim a_\iota$.

For a function space $X(\B^d)$ consisting of functions defined on $\B^d$ and a weight $w$ nonzero on $\B^d$ we write $X(\B^d;w)$ for the space which is obtained as $w^{-1}X(\B^d)= \{f \mid wf \in X(\B^d)\}$. If $X(\B^d)$ is a normed vector space, the norm $\|\cdot \|_{X(\B^d)}$ induces a norm on $X(\B^d;w)$ by $\norm{f}{X(\B^d;w)}= \norm{w f}{X(\B^d)}$. To avoid confusion, as an example, we mention that the weighted $L^2$-space obtained by multiplying the measure by $w$, sometimes denoted by $L^2(\B^d, w(r) \mathrm{d}r)$, would correspond to $L^2(\B^d; \sqrt{w})$ in our notation. 

We write $C^\infty(\overline{\B^d})$ for the space of smooth functions whose partial derivatives of all orders are bounded on $\B^d$. We abuse notation to also write $C^\infty([0,1])$, $C^\infty([0,\infty))$, $C^\infty([0,\infty)\times \B^d)$ and $C^\infty([0,\infty)\times \overline{\B^d})$. The subspace of radial functions is given by 
\begin{align*}
    C^\infty_{\operatorname{rad}}(\overline{\B^d}):=\{f \in C^\infty(\overline{\B^d}) \mid f \text{ is radially symmetric}\}.
\end{align*}
and the space of smooth even functions on the closed unit interval is given by
\begin{align*}
    C^\infty_{\operatorname{ev}}([0,1]):=\{f \in C^\infty([0,1]) \mid \forall n \in \N : \lim_{x \rightarrow 0^+} f^{(2n-1)}(x)=0\}.
\end{align*}
Recall from \cite{ostermannradial} that for any $f \in C^\infty_{\operatorname{rad}}(\overline{\B^d})$ there is a radial profile $\widetilde{f} \in C^\infty_{\operatorname{ev}}([0,1])$ such that $f(x) = \widetilde{f}(\abs{x})$.

For $k \in \N_0$, we introduce the Sobolev norm of $f \in C^\infty(\overline{\B^d})$ as
\begin{align*}
    \norm{f}{H^k(\B^d)}:= (\sum_{0 \leq \abs{\alpha} \leq k} \norm{\partial^\alpha f}{L^2(\B^d)}^2)^\frac{1}{2}.
\end{align*}
The space $C^\infty(\overline{\B^d})$ endowed with the norm $\norm{\cdot}{H^k(\B^d)}$ is an inner product space whose completion is the Sobolev space $H^k(\B^d)$.

For function spaces $X_1, X_2$ we consider the product space $X_1 \times X_2$ consisting of tuples of functions 
\begin{align*}
    \mathbf{f}= (f_1,f_2)= \begin{bmatrix}
        f_1 \\ f_2
    \end{bmatrix},
\end{align*}
where $f_1 \in X_1$, $f_2 \in X_2$. Here and throughout the paper we use boldface letters for elements of product spaces. If $X_1, X_2$ are normed spaces then so is $X_1 \times X_2$. For compatibility with inner product structure the norm on the product space is always induced by $\norm{\mathbf{f}}{X_1\times X_2}:= \sqrt{\norm{f_1}{X_1}^2+\norm{f_2}{X_2}^2}$.

%% file: coordinatepic.tex
\begin{figure}
    \centering
    \begin{tikzpicture}
    \begin{axis}[
        axis lines=middle,
        xlabel={$|x|$},
        ylabel={$t$},
        xmin=0, xmax=5,
        ymin=0, ymax=5,
        ticks=none
    ]
        \addplot[red, very thick, domain=0:5, samples=200] 
            {(1+sqrt(4*x^2+1))/2};
        \addlegendentry{$l(y)$}
        
        \addplot[black, dashed, domain=0:5, samples=200] 
            {0.5 + sqrt(x^2)};
        \addlegendentry{$\frac{1}{2} + \sqrt{y^2}$}
        
        \addplot[black, domain=0:5, samples=400] 
            {1.4*abs(x) > 2*x^2/(-1 + sqrt(1 + 4*x^2)) ? 1.4*abs(x) : nan};
        \addlegendentry{$1.4|y|$}
        
        \addplot[black, domain=0:5, samples=200] 
            {1.7*abs(x) > 2*x^2/(-1 + sqrt(1 + 4*x^2)) ? 1.7*abs(x) : nan};
        \addlegendentry{$1.7|y|$}
        
        \addplot[black, domain=0:5, samples=200] 
            {2.5*abs(x) > 2*x^2/(-1 + sqrt(1 + 4*x^2)) ? 2.5*abs(x) : nan};
        \addlegendentry{$2.5|y|$}
        
        \addplot[black, domain=0:5, samples=600] 
            {5*abs(x) > 2*x^2/(-1 + sqrt(1 + 4*x^2)) ? 5*abs(x) : nan};
        \addlegendentry{$5|y|$}
        
        \addplot[black, domain=0:5, samples=200] 
            {(sqrt(1 + 4*x^2*0.4^2)+ 1)/(2*0.4)};
        \addlegendentry{$k(y, 0.4)$}
        
        \addplot[black, domain=0:5, samples=200] 
            {(sqrt(1 + 4*x^2*0.55^2) + 1)/(2*0.55};
        \addlegendentry{$k(y, 0.55)$}
        
        \addplot[black, domain=0:5, samples=200] 
            {(sqrt(1 + 4*x^2*0.75^2) + 1)/(2*0.75)};
        \addlegendentry{$k(y, 0.75)$}
        
        \addplot[black, domain=0:5, samples=200] 
            {(1+sqrt(1+4*0.3^2*x^2))/(2*0.3)};
        \addlegendentry{$k(y, 0.3)$}

        \addplot[black, domain=0:5, samples=200] 
            {(sqrt(1 + 4*x^2*0.23^2) + 1)/(2*0.23)};
        \addlegendentry{$k(y, 0.9)$}
        
        \legend{};
    \end{axis}
\end{tikzpicture}
\caption{Projection of FHSC. The straight lines correspond to $y = \text{const}$, the hyperboloids correspond to $s = \text{const}$. The red hyperboloid $\Sigma_0$ is the lower boundary of the coordinate region and functions as the initial hypersurface for the IVP. The dashed line depicts the light cone with vertex $\left(\frac{1}{2},0 \right)$.}
    \label[figure]{fhscpic}
\end{figure}
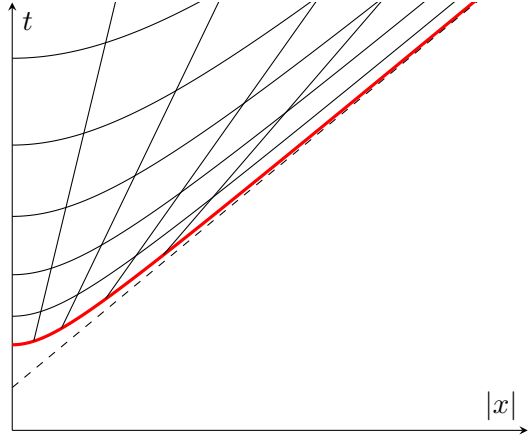

%% file: FreeEvo.tex
\section{Free wave evolution in FHSC}

In the following, we implement a functional-analytic framework for the flow associated with the rescaled wave operator in similarity coordinates
\begin{align}\label{freeevow}
    \widehat{\square}w(s,y)=\left[\partial_s^2+2 \partial_s \Lambda_y+ (y^k y^\ell- \delta^{k\ell}) \partial_k \partial_\ell + \frac{2d-3-3\abs{y}^2}{1-\abs{y}^2}\partial_s +4\Lambda_y+2\right] w(s,y).
\end{align}
\subsection{Connection to the blowup problem}
We make the critical observation that the variable 
\begin{align}\label{functiontransform}
    f(s,y):=e^{(d-2)s}(1-\abs{y}^2)^{\frac{3-d}{2}} w(s,y)
\end{align}
yields the differential operator
\begin{align}\label{freebweq}
    \frac{e^{(d-2)s}}{(1-\abs{y}^2)^\frac{d-3}{2}}\widehat{\square}w(s,y)= \left[\partial_s^2+2\partial_s \Lambda_y-(\delta^{k \ell}-y^k y^\ell)\partial_{y^k} \partial_{y^\ell}+\partial_s+2 \Lambda_y \right]f(s,y)=:\square^*f(s,y),
\end{align}
which, as a consequence of the conformal symmetry in physical coordinates, for example appears in the free evolution of a related blowup problem \cite{donninger2025stableselfsimilarblowupcorotational}. This suggests tracking the evolution in function spaces compatible with the conformal symmetry.

First, let us recall the functional-analytic set-up used in the blowup problem. By introducing the components $f_1(s,y):= f(s,y), f_2(s,y):=(\partial_s+\Lambda_y)f(s,y)$, Eq.~\eqref{freebweq} is recast into the first order system
\begin{align}\label{firstordersystemstar}
    \partial_s \begin{bmatrix}
        f_1(s,y)\\ f_2(s,y)
    \end{bmatrix}= \begin{bmatrix}
        f_2(s,y)-\Lambda_y f_1(s,y)\\ -\Lambda_y f_2(s,y)-f_2(s,y)+\Delta_y f_1(s,y)
    \end{bmatrix} + \begin{bmatrix}
        0 \\ \square^* f(s,y)
    \end{bmatrix}.
\end{align}
This leads to the following definition.
\begin{definition}
    Let $d \in \mathbb{N}$, $(g_1, g_2)  \in C^\infty(\overline{\mathbb{B}^d})^2$ and define
    \begin{align*}
        \mathbf{L}^*_0\begin{bmatrix}
            g_1\\ g_2
        \end{bmatrix}(y):= \begin{bmatrix}
            g_2(y)-\Lambda g_1(y) \\ -\Lambda g_2(y)- g_2(y)+ \Delta g_1(y)
        \end{bmatrix}.
    \end{align*}
\end{definition}
To establish the connection to the evolution associated with the differential operator $\widehat{\square}$ from Eq.~\eqref{freeevow} we define the components 
\begin{align}\label{variablepair}
    &w_1(s,y):= w(s,y), &w_2(s,y):= \left(\partial_s+\Lambda_y+ (d-2)+ (d-3)\frac{\abs{y}^2}{1-\abs{y}^2}\right)w(s,y).
\end{align}
Define $W(y):=(1-\abs{y}^2)^\frac{3-d}{2}$, the spatial part of the conformal weight from the start of the section, and set $\mathbf{W}\mathbf{h}:=(W h_1, W h_2)$ and $\mathbf{W}^{-1}\mathbf{h}:=(\frac{1}{W} h_1, \frac{1}{W} h_2)$ for general $\mathbf{h}=(h_1,h_2)$. The pair $(w_1,w_2)$ is chosen such that
\begin{align}\label{connecttoblowupeq}
    e^{(2-d)s} \mathbf{W}^{-1}\begin{bmatrix}
        f_1(s,\cdot) \\ f_2(s,\cdot)
    \end{bmatrix}
    = \begin{bmatrix}
        w_1(s,\cdot) \\ w_2(s,\cdot)
    \end{bmatrix}.
\end{align}
Notably
\begin{align*}
    \partial_s \begin{bmatrix}
        w_1(s,\cdot) \\ w_2(s,\cdot)
    \end{bmatrix} &= \mathbf{W}^{-1} \partial_s \begin{bmatrix}
        e^{(2-d)s}f_1(s,\cdot) \\ e^{(2-d)s}f_2(s,\cdot)
    \end{bmatrix}\\
    &= \mathbf{W}^{-1}e^{(2-d)s}\left(((2-d)\mathbf{I}+\partial_s)\begin{bmatrix}
        f_1(s,\cdot) \\ f_2(s,\cdot)
    \end{bmatrix}\right)\\
    &=\mathbf{W}^{-1}e^{(2-d)s}\left(((2-d)\mathbf{I}+\mathbf{L}^{*}_0)\begin{bmatrix}
        f_1(s,\cdot) \\ f_2(s,\cdot)
    \end{bmatrix}+
    \begin{bmatrix}
        0 \\ \square^* f(s,\cdot)
    \end{bmatrix}\right)\\
    &= \mathbf{W}^{-1}\mathbf{L}^{*,2-d}_0 \mathbf{W}\begin{bmatrix}
        w_1(s,\cdot) \\ w_2(s,\cdot)
    \end{bmatrix}+
    \begin{bmatrix}
        0 \\ \widehat{\square}w(s,\cdot)
    \end{bmatrix}.
\end{align*}
Hence the operator of interest formally is $\mathbf{L}_0:=\mathbf{W}^{-1}\mathbf{L}^{*,2-d}_0 \mathbf{W}$, where $\mathbf{L}^{*,2-d}_0$ denotes the shifted operator $\mathbf{L}^{*}_0 - (d-2)\mathbf{I}$.

To apply the result from \cite[p.~63, Theorem B.1]{donninger2025stableselfsimilarblowupcorotational}, we only need to set up the corresponding function spaces. Here we assume that the dimension is odd to ensure smoothness of $W$.
\begin{definition}
    Let $3 \leq d \in \N$ be odd and $k \in \mathbb{N}$. Define the space of smooth functions that vanish to order $\frac{d-3}{2}$ at the boundary as
    \begin{align*}
        C^\infty(\overline{\mathbb{B}^d};W) := \left\{f \in C^\infty(\overline{\mathbb{B}^d}): Wf \in C^\infty(\overline{\mathbb{B}^d})\right\}.
    \end{align*}
We equip $\cinftyradw$ with the norm $\norm{\cdot}{\hilspacew{k}}:=\norm{\mathbf{W}\cdot}{\hilspace{k}}$. Denote by $\hilspacew{k}$ the completion of $\cinftyradw$ with respect to $\norm{\cdot}{\hilspacew{k}}$.
Analogously we define the radial subspace
\begin{align*}
    C_\text{rad}^\infty(\overline{\mathbb{B}^d};W):= \left\{f \in C_\text{rad}^\infty(\overline{\mathbb{B}^d}): Wf \in C_\text{rad}^\infty(\overline{\mathbb{B}^d})\right\}.
\end{align*}
The radial subspace $\hilspacewrad{k} \subset \hilspacew{k}$ is obtained as the completion of $\cinftyradw$ with respect to $\norm{\cdot}{\hilspacew{k}}$.
\end{definition}
\begin{remark}
\begin{itemize}
    \item Clearly $\mathbf{W}: (\cinftyradw, \norm{\cdot}{\hilspacew{k}}) \rightarrow (C^\infty(\overline{\mathbb{B}^d})^2, \norm{\cdot}{\hilspace{k}})$ is an isometric homeomorphism between inner product spaces which extends to a unitary operator between $\hilspacew{k}$ and $\hilspace{k}$ and its restriction on radial subspaces. In notation we will not distinguish between $\mathbf{W}$ and its extensions on Sobolev spaces.
    \item $(\hilspacew{k})_{k \in \N}$ is a nested sequence of Hilbert spaces as a consequence of the same being true for $(\hilspace{k})_{k \in \N}$.
    \item Note that there is no obvious relation between the spaces $\hilspacew{k}$ and the weighted product Sobolev spaces obtained by multiplying the measure with any weight. 
\end{itemize}
\end{remark}
\subsection{The free evolution is uniformly exponentially stable}
With these preparations we are now able to state sharp growth estimates for the free evolution in every odd dimension greater than or equal to $3$ and every regularity.
\begin{lemma}\label{freesemigroup}
    Let $3 \leq d \in \mathbb{N}$ be odd and $k \in \mathbb{N}$. The operator $\mathbf{L}_0: \mathcal{D}(\mathbf{L}_0) \subset \hilspacew{k} \rightarrow \hilspacew{k}$, given by
    \begin{align*}
        &\mathbf{L}_0\mathbf{f}=\mathbf{W}^{-1}\mathbf{L}_0^{*,2-d}\mathbf{W}\mathbf{f},&\mathcal{D}(\mathbf{L}_0):=C^\infty(\overline{\mathbb{B}^d};W)^2,
    \end{align*}
    is closable and its closure $\overline{\mathbf{L}_0}:\mathcal{D}\left(\overline{\mathbf{L}_0}\right) \subset \hilspacew{k} \rightarrow \hilspacew{k}$, given by
    \begin{align*}
        &\overline{\mathbf{L}_0}\mathbf{f}=\mathbf{W}^{-1}\overline{\mathbf{L}_0^{*,2-d}}\mathbf{W}\mathbf{f}, &\mathcal{D}\left(\overline{\mathbf{L}_0}\right)=\mathbf{W}^{-1}\mathcal{D}\left(\overline{\mathbf{L}_0^{*}}\right),
    \end{align*}
    generates the strongly continuous semigroup $\mathbf{S}_0:=\mathbf{W}^{-1}\mathbf{S}_0^{*,2-d}(\cdot)\mathbf{W}: [0,\infty) \rightarrow \mathcal{L}\left(\hilspacew{k}\right)$. For every $0<\varepsilon<\frac{1}{2}$ there exists $M_{d,k,\varepsilon} \geq 1$ such that the semigroup satisfies the growth estimate
    \begin{align*}
        &\norm{\mathbf{S}_0(s)\mathbf{f}}{\hilspacew{k}} \leq M_{d,k,\varepsilon} e^{(\omega_{d,k,\varepsilon}-(d-2))s}\norm{\mathbf{f}}{\hilspacew{k}}, &\omega_{d,k,\varepsilon}= \max \left\{\frac{d}{2}-k,\varepsilon \right\}
    \end{align*}
    for all $\mathbf{f} \in \hilspacew{k}$ and for all $s \geq 0$.
    
    The appearing operators are compatible with radial symmetry. Namely, the operator $\mathbf{L}_0 \mathord \restriction: \mathcal{D}(\mathbf{L}_0 \mathord \restriction) \subset \hilspacewrad{k} \rightarrow \hilspacewrad{k}$ given by
    \begin{align*}
        &\mathbf{L}_0 \mathord \restriction \mathbf{f}= \mathbf{L}_0 \mathbf{f}, &\mathcal{D}(\mathbf{L}_0 \mathord \restriction)= C_{\operatorname{rad}}^\infty(\overline{\mathbb{B}^d};W)^2
    \end{align*}
    is closable and its closure $\overline{\mathbf{L}_0 \mathord \restriction}: \mathcal{D}(\overline{\mathbf{L}_0 \mathord \restriction}) \subset \hilspacewrad{k} \rightarrow \hilspacewrad{k}$ given by
    \begin{align*}
        &\overline{\mathbf{L}_0 \mathord \restriction}\mathbf{f}= \overline{\mathbf{L}_0} \mathbf{f}, &\mathcal{D}(\overline{\mathbf{L}_0 \mathord \restriction})= \mathcal{D}(\overline{\mathbf{L}_0}) \cap \hilspacewrad{k},
    \end{align*}
    generates the radial subspace semigroup $\mathbf{S}_0 \mathord \restriction:[0,\infty) \rightarrow \mathcal{L}\left(\hilspacewrad{k}\right)$ given by
    \begin{align*}
        \mathbf{S}_0 \mathord \restriction (s)\mathbf{f}= \mathbf{S}_0 (s)\mathbf{f}
    \end{align*}
    for all $\mathbf{f} \in \hilspacewrad{k}$ and all $s \geq 0$.
\end{lemma}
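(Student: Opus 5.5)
The plan is to transport everything from the blowup setting via the unitary map $\mathbf{W}$, so that the statement reduces to \cite[p.~63, Theorem B.1]{donninger2025stableselfsimilarblowupcorotational}. That result gives closability of $\mathbf{L}_0^*$ on $C^\infty(\overline{\B^d})^2 \subset \hilspace{k}$ and identifies the closure as the generator of a semigroup $\mathbf{S}_0^*$ satisfying
\begin{align*}
\norm{\mathbf{S}_0^*(s)\mathbf{g}}{\hilspace{k}} \leq M_{d,k,\varepsilon} e^{\omega_{d,k,\varepsilon} s}\norm{\mathbf{g}}{\hilspace{k}}.
\end{align*}
It also states the corresponding facts on the radial subspace.

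First, by the remark preceding the lemma, $\mathbf{W}$ is a unitary map from $\hilspacew{k}$ onto $\hilspace{k}$, and it maps the core $C^\infty(\overline{\B^d};W)^2$ bijectively onto $C^\infty(\overline{\B^d})^2$. Here oddness of $d$ is used: $W^{-1}=(1-\abs{y}^2)^{\frac{d-3}{2}}$ is a polynomial, hence smooth, so $\mathbf{W}^{-1}$ maps $C^\infty(\overline{\B^d})^2$ back into the weighted class.

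Second, I use that bounded shifts and unitary conjugation behave well. The shifted operator $\mathbf{L}_0^{*,2-d}=\mathbf{L}_0^*-(d-2)\mathbf{I}$ is closable with closure $\overline{\mathbf{L}_0^*}-(d-2)\mathbf{I}$ on the same domain. It generates $\mathbf{S}_0^{*,2-d}(s)=e^{(2-d)s}\mathbf{S}_0^*(s)$, by the standard rescaling lemma for $C_0$-semigroups. Conjugating by a unitary preserves closability and commutes with taking closures: the graph of $\mathbf{W}^{-1}A\mathbf{W}$ is the image of the graph of $A$ under the unitary map $\mathbf{W}^{-1}\times\mathbf{W}^{-1}$. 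This gives the stated closure and the domain $\mathbf{W}^{-1}\mathcal{D}(\overline{\mathbf{L}_0^*})$. Similarly, $\mathbf{W}^{-1}\mathbf{S}_0^{*,2-d}(\cdot)\mathbf{W}$ is a strongly continuous semigroup on $\hilspacew{k}$. Its generator is the conjugated generator, which I check directly from the definition through difference quotients, since $\mathbf{W}$ is an isometry. The growth bound follows because the norms agree:
\begin{align*}
\norm{\mathbf{S}_0(s)\mathbf{f}}{\hilspacew{k}}=\norm{\mathbf{S}_0^{*,2-d}(s)\mathbf{W}\mathbf{f}}{\hilspace{k}}\leq M e^{(\omega-(d-2))s}\norm{\mathbf{W}\mathbf{f}}{\hilspace{k}}.
\end{align*}

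Third, the radial statements. Multiplication by the radial weight $W$ preserves radial symmetry, so $\mathbf{W}$ restricts to a unitary map between $\hilspacewrad{k}$ and $\hilspacerad{k}$, and it maps $C^\infty_{\operatorname{rad}}(\overline{\B^d};W)^2$ onto $C^\infty_{\operatorname{rad}}(\overline{\B^d})^2$. The radial part of the cited theorem then transfers by the same conjugation argument. The identities $\overline{\mathbf{L}_0\restriction}=\overline{\mathbf{L}_0}$ on $\mathcal{D}(\overline{\mathbf{L}_0})\cap\hilspacewrad{k}$ and $\mathbf{S}_0\restriction(s)=\mathbf{S}_0(s)$ follow by pulling back the corresponding identities in $\hilspace{k}$.

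The main obstacle is a bookkeeping one: making sure the cited theorem is stated in exactly the form needed. That means the operator $\mathbf{L}_0^*$ as defined here, the space $\hilspace{k}$ with the product norm, the core $C^\infty(\overline{\B^d})^2$, and the full range of $k$ and $\varepsilon$ all have to match. One must also verify the density of $C^\infty_{\operatorname{rad}}(\overline{\B^d};W)^2$ in the spaces as completed here. I expect this density to hold by definition of the completion, but it should be checked against the cited framework. If the cited growth bound is only stated for $k$ large, I would interpolate or directly quote its lower-regularity version.
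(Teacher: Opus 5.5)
Your proposal is correct and follows essentially the same route as the paper. Both arguments identify $\mathbf{L}_0^*$ with the operator of the cited Theorem B.1, shift by $-(d-2)$ via semigroup rescaling, and conjugate with the isometric isomorphism $\mathbf{W}$ (similar semigroups) to obtain closability, the generator, and the growth bound from equality of norms, with the radial statements inherited from the cited result. Your extra justifications are more detailed versions of steps the paper takes for granted: the graph argument for closures under unitary conjugation, and oddness of $d$ making $W^{-1}$ a polynomial so that $\mathbf{W}$ maps the cores bijectively.
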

\begin{proof}
    Note that $\mathbf{L}_0^*$ equals the operator in \cite[p.~62, Theorem B.1]{donninger2025stableselfsimilarblowupcorotational} for $R_0=R=1, h \equiv -1$. $\mathbf{S}_0$ is a semigroup since it is similar to $\mathbf{S}_0^{*,2-d}$ via the homeomorphism $\mathbf{W}$ (for an introduction to similar semigroups we refer to \cite[59]{engnag}). Here $\mathbf{S}_0^{*,2-d}(s):=e^{-(d-2)s}\mathbf{S}_0^*(s)$ is the rescaled semigroup of $\mathbf{S}_0^*$ obtained in \cite{donninger2025stableselfsimilarblowupcorotational}. The generator of $\mathbf{S}_0=\mathbf{W}^{-1}\mathbf{S}_0^{*,2-d}\mathbf{W}$ is given by $\mathbf{W}^{-1}\overline{\mathbf{L}_0^{*,2-d}}\mathbf{W}$ which equals the closure of $\mathbf{L}_0$ for $\mathbf{W}$ is a homeomorphism. The function spaces were set up in such a way that the growth estimate follows directly from the estimate in \cite{donninger2025stableselfsimilarblowupcorotational} and semigroup scaling \cite[60]{engnag} by
    \begin{align*}
        \norm{\mathbf{S}_0(s)\mathbf{f}}{\hilspacew{k}} =\norm{\mathbf{W}^{-1}\mathbf{S}_0^{*,2-d}(s)\mathbf{W}\mathbf{f}}{\hilspacew{k}} \leq M_{d,k,\varepsilon} e^{(\omega_{d,k,\varepsilon}-(d-2))s} \norm{\mathbf{W}\mathbf{f}}{\hilspace{k}}
    \end{align*}
    for all $\mathbf{f} \in \hilspacew{k}$ and all $s \geq 0$.

    The ability to restrict to radial subspaces is a consequence of the same being true in \cite{donninger2025stableselfsimilarblowupcorotational}.
\end{proof}
\begin{remark}
    We point out that the growth bound is sharp, since the conformal involution of $1$ yields (after multiplication by $t$) the admissible solution $e^{-(d-2) s}(1-|y|^2)^{\frac{d-3}{2}}$. 
\end{remark}
\begin{remark}
    Although a priori this is a statement about a pair of generator and semigroup on $\hilspacew{k}$ for each $k \geq 1$ respectively, one should think about the family of semigroups as a single one acting on the largest space $\hilspacew{k_{\operatorname{min}}}$ with additional properties when restricted to subspaces $\hilspacew{\ell}$. This is true in much more generality and is sometimes referred to as ``restriction property" \cite[Lemma C.1]{Glogicrestriction}. It is also true for any family of semigroups and generators that follow and explains why throughout the paper we suppress in notation the dependence of semigroups and generators on the underlying Hilbert spaces.
\end{remark}

%% file: LinearEvo.tex
\section{Linearised evolution}
In this section, we introduce the potential as a bounded perturbation into the evolution. Since this term arises from the nonlinearity induced by supercritical wave maps, we restrict to spatial dimensions $d \geq 5$. As in the previous section, we assume that $d$ is odd. Indeed, throughout this section, let $d \geq 5$ be odd and $k \in \N$.
We are interested in the linearised evolution
\begin{align*}
    (\widehat{\square}+V(y))v(s,y)=0,
\end{align*}
where $V(y)=-\frac{8(d-3)(d-4)}{(d-4+\abs{y}^2)^2}$.

Note that multiplication with $V$ defines a bounded operator  $H_{\operatorname{rad}}^{k}(\mathbb{B}^d;W) \rightarrow H_{\operatorname{rad}}^{k}(\mathbb{B}^d;W)$ for all $k \in \N$, since every derivative of $V$ is bounded on $\mathbb{B}^d$.

\subsection{Generation of linearised semigroup}

We immediately switch to the semigroup picture. 
\begin{definition}\label{linearisedgeneratordef}
    The radial operator $\mathbf{L}:=\overline{\mathbf{L}_0 \mathord \restriction}+\mathbf{L}': \mathcal{D}(\mathbf{L}) \subset \hilspacewrad{k} \rightarrow \hilspacewrad{k}$ is defined by $\mathcal{D}(\mathbf{L}):=\mathcal{D}(\overline{\mathbf{L}_0 \mathord \restriction})$ and
\begin{align*}
    &\mathbf{L}' \in \mathcal{L}(\hilspacewrad{k}), &\mathbf{L}' \begin{bmatrix}
        f_1 \\ f_2
    \end{bmatrix}:= \begin{bmatrix}
        0 \\ -V f_1
    \end{bmatrix}.
\end{align*}
\end{definition}
The boundedness of multiplication with $V$ implies that $\mathbf{L}'$ is bounded and in particular closed. Thus $\mathbf{L}$ is a closed operator. Perturbation theory for semigroups shows that $\mathbf{L}$ generates a linearised wave flow.
\begin{lemma}\label{linearisedevoexist}
     The closed linear operator $\mathbf{L}: \mathcal{D}(\mathbf{L}) \subset \hilspacewrad{k} \rightarrow \hilspacewrad{k}$ is the generator of a strongly continuous one-parameter semigroup $\mathbf{S}: [0,\infty) \rightarrow \mathcal{L}(\hilspacewrad{k})$. Additionally, let $0 < \varepsilon < \frac{1}{2}$ and $M_{d,k,\varepsilon}$, $\omega_{d,k,\varepsilon}$ be the constants from \Cref{freesemigroup}. Then $\mathbf{S}$ satisfies
    \begin{align*}
        \norm{\mathbf{S}(s)\mathbf{f}}{\hilspacew{k}} \leq M_{d,k,\varepsilon} e^{\left(\omega_{d,k,\varepsilon}-(d-2)+M_{d,k,\varepsilon}\norm{\mathbf{L}'}{\mathcal{L}(\hilspacew{k})}\right)s}\norm{\mathbf{f}}{\hilspacew{k}}
    \end{align*}
    for all $\mathbf{f} \in \hilspacewrad{k}$ and $s \geq 0$.
\end{lemma}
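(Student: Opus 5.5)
This is the bounded perturbation theorem for $C_0$-semigroups, applied on the radial space $\hilspacewrad{k}$, with \Cref{freesemigroup} supplying the free semigroup and its growth bound.

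\emph{Step 1: the free semigroup on the radial space.} By \Cref{freesemigroup}, $\overline{\mathbf{L}_0\mathord\restriction}$ generates $\mathbf{S}_0\mathord\restriction$ on $\hilspacewrad{k}$. This restricted semigroup coincides with $\mathbf{S}_0$ on radial data, so it inherits the bound
\[
\norm{\mathbf{S}_0\mathord\restriction(s)}{\mathcal{L}(\hilspacewrad{k})}\leq M e^{\omega s}, \qquad \omega:=\omega_{d,k,\varepsilon}-(d-2),\quad M:=M_{d,k,\varepsilon}.
\]

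\emph{Step 2: $\mathbf{L}'$ is a bounded operator on $\hilspacewrad{k}$.} Since $V$ is smooth and radial on $\overline{\B^d}$ with all derivatives bounded, multiplication by $V$ commutes with multiplication by $W$. Hence
\[
\norm{W(Vf_1)}{H^{k-1}(\B^d)}=\norm{V\,(Wf_1)}{H^{k-1}(\B^d)}\lesssim \norm{Wf_1}{H^{k}(\B^d)}.
\]
Multiplication by $V$ also preserves $C^\infty_{\operatorname{rad}}(\overline{\B^d};W)$. Therefore $\mathbf{L}'$ maps the dense core into radial functions and extends to an element of $\mathcal{L}(\hilspacewrad{k})$.

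\emph{Step 3: apply the perturbation theorem.} The bounded perturbation theorem (e.g.\ Engel--Nagel, Thm.~III.1.3) says: if $\mathbf{A}$ generates a semigroup $\mathbf{T}$ with $\norm{\mathbf{T}(s)}{}\leq Me^{\omega s}$ and $\mathbf{B}$ is bounded, then $\mathbf{A}+\mathbf{B}$ on $\mathcal{D}(\mathbf{A})$ generates a semigroup $\mathbf{S}$ with
\[
\norm{\mathbf{S}(s)}{}\leq Me^{(\omega+M\norm{\mathbf{B}}{})s}.
\]
Taking $\mathbf{A}=\overline{\mathbf{L}_0\mathord\restriction}$ and $\mathbf{B}=\mathbf{L}'$ gives exactly the stated estimate for $\mathbf{S}$ generated by $\mathbf{L}$.

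If a self-contained argument is preferred, $\mathbf{S}$ can be built as the Dyson--Phillips series $\mathbf{S}=\sum_n \mathbf{S}_n$, where
\[
\mathbf{S}_{n+1}(s)=\int_0^s \mathbf{S}_0(s-\sigma)\,\mathbf{L}'\,\mathbf{S}_n(\sigma)\,d\sigma .
\]
By induction, $\norm{\mathbf{S}_n(s)}{}\leq M^{n+1}\norm{\mathbf{L}'}{}^n s^n e^{\omega s}/n!$, and summing the series yields the bound.

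\emph{Main obstacle.} The only real care point is the norm of $\mathbf{L}'$. The statement writes $\norm{\mathbf{L}'}{\mathcal{L}(\hilspacew{k})}$, whereas the operator relevant for the radial problem is its restriction to $\hilspacewrad{k}$. This restriction has norm at most $\norm{\mathbf{L}'}{\mathcal{L}(\hilspacew{k})}$, because $\mathbf{L}'$ is equally well defined on the full space $\hilspacew{k}$ by the argument of Step 2. So the estimate holds with the constant as written. Everything else is a direct citation.
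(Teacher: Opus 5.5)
Your proposal is correct and follows the paper's proof, which is the bounded perturbation theorem (Engel--Nagel, Section III.1) applied to $\mathbf{S}_0\mathord\restriction$ from \Cref{freesemigroup} with the bounded operator $\mathbf{L}'$. Your additional details fill in points the paper leaves implicit: boundedness of $\mathbf{L}'$ from the smoothness of $V$, the Dyson--Phillips estimate, and the observation that the radial operator norm of $\mathbf{L}'$ is at most its full-space norm.
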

\begin{proof}
    The operator $\mathbf{L}$ generating a semigroup $\mathbf{S}$ with the asserted growth estimate readily follows from the bounded perturbation theorem \cite[p.~158]{engnag} applied to the semigroup $\mathbf{S}_0 \mathord \restriction$ from \Cref{freesemigroup}.
\end{proof}
Since we do not have control over $M_{d,k,\varepsilon}\norm{\mathbf{L}'}{\mathcal{L}(\hilspacew{k})}$ the growth estimate is not sufficient to prove a stable evolution. In the following, we will refine the growth estimate by gaining deeper insight into the spectrum of $\mathbf{L}$.

\subsection{Spectral analysis}
For orientation we outline the rest of the section:
\begin{itemize}
    \item First we reduce the linear stability to mode stability by showing that $\mathbf{L}'$ is compact, that the part of the spectrum of $\mathbf{L}$ that would correspond to an unstable evolution is confined to a compact region, and applying the analytic Fredholm theorem.
    \item We then show that mode solutions would induce solutions for a one-dimensional problem.
    \item The occurrence of these solutions is then ruled out by a standard application of Sturm-Liouville theory.
    \item The spectral analysis results in the linear stability of the evolution by an application of the Gearhart-Prüss-Greiner Theorem.
\end{itemize}
The first step is to observe that $\mathbf{L}'$ is not only bounded but even compact, which readily follows from compactness of Sobolev embeddings.
\begin{lemma}\label{compactness}
     The operator $\mathbf{L}': \hilspacewrad{k} \rightarrow \hilspacewrad{k}$ is compact.
\end{lemma}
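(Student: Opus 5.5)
The plan is to factor $\mathbf{L}'$ through a compact Sobolev embedding, after transferring everything to unweighted spaces via the unitary $\mathbf{W}$. Since $\mathbf{W}:\hilspacewrad{k}\to\hilspacerad{k}$ is unitary, $\mathbf{L}'$ is compact if and only if $\widetilde{\mathbf{L}}:=\mathbf{W}\mathbf{L}'\mathbf{W}^{-1}$ is compact on $\hilspacerad{k}=H^k_{\operatorname{rad}}(\B^d)\times H^{k-1}_{\operatorname{rad}}(\B^d)$. Because $\mathbf{W}$ multiplies both components by the same scalar weight $W$, and $V$ is a scalar multiplication, we get
\begin{align*}
\widetilde{\mathbf{L}}\begin{bmatrix} g_1\\ g_2\end{bmatrix}=\begin{bmatrix} 0\\ -W V W^{-1} g_1\end{bmatrix}=\begin{bmatrix} 0\\ -V g_1\end{bmatrix}.
\end{align*}
The weights cancel exactly, so the question reduces to the unweighted setting.

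Next I factor $\widetilde{\mathbf{L}}$ as follows. The map $g_1\mapsto g_1$ is viewed as the embedding $\iota: H^k(\B^d)\hookrightarrow H^{k-1}(\B^d)$. This is compact by the Rellich--Kondrachov theorem, since $\B^d$ is a bounded Lipschitz domain. The map $g\mapsto Vg$ is then bounded on $H^{k-1}(\B^d)$. Indeed, $V(y)=-8(d-3)(d-4)(d-4+\abs{y}^2)^{-2}$ is smooth on a neighbourhood of $\overline{\B^d}$ (here $d\ge5$ makes the denominator positive), so all its derivatives are bounded. By the Leibniz rule, $\norm{Vg}{H^{k-1}}\lesssim \norm{g}{H^{k-1}}$.

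Hence $\widetilde{\mathbf{L}}=J\circ M_V\circ\iota\circ P_1$, where $P_1$ is the projection onto the first component and $J$ is the inclusion into the second slot. This is a composition of bounded operators with one compact factor, so it is compact. Restricting to the closed radial subspace preserves compactness, and $\widetilde{\mathbf{L}}$ maps radial functions to radial functions because $V$ is radial. Conjugating back with $\mathbf{W}$ then gives compactness of $\mathbf{L}'$ on $\hilspacewrad{k}$.

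There is no real obstacle in this argument. The one point to check carefully is that conjugation by $\mathbf{W}$ commutes with multiplication by $V$, which is immediate because both are scalar multiplications. This check ensures that the singular weight $W$ plays no role. For $k=1$ the same argument applies with $H^0=L^2$.
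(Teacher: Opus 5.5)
Your proposal is correct and is essentially the paper's proof. The paper likewise factors $\mathbf{L}'$ as $\mathbf{W}$, then projection onto the first component, then the compact embedding $H^k_{\operatorname{rad}}(\B^d)\hookrightarrow H^{k-1}_{\operatorname{rad}}(\B^d)$, then multiplication by $-V$ into the second slot, then $\mathbf{W}^{-1}$, relying on the fact that the scalar weight commutes with $V$; your checks on the smoothness of $V$ for $d\geq 5$ and on radial invariance just make explicit what the paper leaves implicit.
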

\begin{proof}
    Decompose $\mathbf{L}'$ into bounded operators
    \begin{align*}
     \mathcal{H}_{\operatorname{rad}}^{k}(&\mathbb{B}^d;\mathbf{W}) \rightarrow \mathcal{H}_{\operatorname{rad}}^{k}(\mathbb{B}^d) \rightarrow H_{\operatorname{rad}}^{k}(\mathbb{B}^d) \hookrightarrow H_{\operatorname{rad}}^{k-1}(\mathbb{B}^d) \rightarrow \hspace{5pt} \mathcal{H}_{\operatorname{rad}}^{k}(\mathbb{B}^d) \hspace{3pt} \rightarrow  \mathcal{H}_{\operatorname{rad}}^{k}(\mathbb{B}^d;\mathbf{W}),\\
    &\begin{bmatrix}
        f_1 \\ f_2
    \end{bmatrix} \hspace{10pt}\mapsto \begin{bmatrix}
        W f_1 \\ W f_2
    \end{bmatrix} \hspace{9pt} \mapsto \hspace{9pt} W f_1 \hspace{10pt}\mapsto \hspace{10pt} W f_1 \hspace{15pt}\mapsto \begin{bmatrix}
        0 \\ -V W f_1
    \end{bmatrix} \mapsto \begin{bmatrix}
        0 \\ -V f_1
    \end{bmatrix}.
\end{align*}
Since the Sobolev embedding in between is compact, $\mathbf{L}'$ is compact as a composition of bounded and compact operators.
\end{proof}
In a next step we show that the part of the spectrum of $\mathbf{L}$ which would correspond to an unstable evolution is confined to a bounded region.

Since we will use an explicit description of $\overline{\mathbf{L}_0\mathord \restriction}$ we preface with a technical lemma.
\begin{lemma}\label{techlem}
     Let $\mathbf{f} \in \mathcal{D}\left(\overline{\mathbf{L}_0 \mathord \restriction}\right)$ and set $\mathbf{g}:=(g_1,g_2):= \overline{\mathbf{L}_0 \mathord \restriction}\mathbf{f}$. Then $\mathbf{f}= (f_1,f_2)$ satisfies
    \begin{align*}
        &g_1=f_2 - W^{-1}\Lambda (W f_1) - (d-2) f_1,\\
        &g_2= -W^{-1}\Lambda(W f_2) - f_2 + W^{-1} \Delta(W f_1)- (d-2)f_2, 
    \end{align*}
    in the sense of distributions.
\end{lemma}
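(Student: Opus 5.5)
The plan is to prove the formulas first on the core $C_{\operatorname{rad}}^\infty(\overline{\B^d};W)^2$ by direct computation, and then pass to the closure by a limiting argument in the sense of distributions.

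\emph{Step 1 (smooth case).} For $\mathbf{f}\in C_{\operatorname{rad}}^\infty(\overline{\B^d};W)^2$ we have $\mathbf{W}\mathbf{f}\in C_{\operatorname{rad}}^\infty(\overline{\B^d})^2$. By definition, $\mathbf{L}_0\mathbf{f}=\mathbf{W}^{-1}(\mathbf{L}_0^*-(d-2)\mathbf{I})\mathbf{W}\mathbf{f}$. Inserting the explicit form of $\mathbf{L}_0^*$ gives the first component
\[
W^{-1}\bigl(Wf_2-\Lambda(Wf_1)\bigr)-(d-2)f_1=f_2-W^{-1}\Lambda(Wf_1)-(d-2)f_1,
\]
and the second component
\[
W^{-1}\bigl(-\Lambda(Wf_2)-Wf_2+\Delta(Wf_1)\bigr)-(d-2)f_2 .
\]
These are exactly the asserted identities, and they hold pointwise on $\B^d$.

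\emph{Step 2 (closure).} Let $\mathbf{f}\in\mathcal{D}(\overline{\mathbf{L}_0\mathord\restriction})$. By \Cref{freesemigroup} there is a sequence $\mathbf{f}_n\in C_{\operatorname{rad}}^\infty(\overline{\B^d};W)^2$ with $\mathbf{f}_n\to\mathbf{f}$ and $\mathbf{L}_0\mathbf{f}_n\to\mathbf{g}$ in $\hilspacew{k}$. Equivalently, $\mathbf{W}\mathbf{f}_n\to\mathbf{W}\mathbf{f}$ and $\mathbf{W}\mathbf{L}_0\mathbf{f}_n\to\mathbf{W}\mathbf{g}$ in $H^k(\B^d)\times H^{k-1}(\B^d)$, hence in $L^2(\B^d)^2$, and therefore in $\mathcal{D}'(\B^d)$. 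Multiply the identities of Step 1 by $W$:
\[
Wg_{1,n}=Wf_{2,n}-\Lambda(Wf_{1,n})-(d-2)Wf_{1,n},
\]
and similarly for the second component. Pair each side with a test function $\varphi\in C_c^\infty(\B^d)$ and move $\Lambda$ and $\Delta$ onto $\varphi$ by integration by parts, using $\Lambda^t\varphi=-\Lambda\varphi-d\varphi$. The $L^2$ convergence then lets us pass to the limit in every term. This yields
\[
Wg_1=Wf_2-\Lambda(Wf_1)-(d-2)Wf_1
\]
in $\mathcal{D}'(\B^d)$, and analogously for $g_2$.

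\emph{Step 3 (dividing by $W$).} Since $W^{-1}=(1-|y|^2)^{\frac{d-3}{2}}$ is smooth on $\B^d$ (indeed a polynomial for odd $d$), multiplication by $W^{-1}$ is a continuous operation on $\mathcal{D}'(\B^d)$. Multiplying the distributional identities by $W^{-1}$ gives the claimed formulas, where $W^{-1}\Lambda(Wf_1)$ and similar expressions are read as distributions. Note that $W$ itself is also smooth in the open ball, being singular only at the boundary.

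The main obstacle is interpretive rather than technical. One has to make precise that the statement is meant on the open ball, with $W^{\pm1}$ acting as smooth multipliers there. Without this, the boundary singularity of $W$ would prevent the distributional manipulations, and restricting to interior test functions is exactly what avoids the problem.
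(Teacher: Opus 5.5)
Your proposal is correct and matches the paper's proof. You take an approximating sequence from the core $C^\infty_{\operatorname{rad}}(\overline{\B^d};W)^2$ (given by the definition of the closure), test the pointwise identities against $\varphi\in C_c^\infty(\B^d)$, integrate by parts, and pass to the limit using $L^2$ convergence; your extra step of multiplying by $W$ and then by $W^{-1}$, both smooth on the open ball, is only a cosmetic variant of the same argument.
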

\begin{proof}
Let $\mathbf{f} \in \mathcal{D}\left(\overline{\mathbf{L}_0 \mathord \restriction}\right)$. By definition of the closure, there is a sequence $(\mathbf{f}_n)_{n \in \mathbb{N}} \subset \mathcal{D}(\mathbf{L}_0 \mathord \restriction) = \cinftyradw$ such that $\lim_{n \rightarrow \infty}\norm{\mathbf{f}-\mathbf{f}_n}{\hilspacew{k}}=0$ and $\lim_{n \rightarrow \infty}\norm{\mathbf{g}-\mathbf{g}_n}{\hilspacew{k}}=0$, where $\mathbf{g}_n:= \mathbf{L}_0 \mathord \restriction \mathbf{f}_n$. By definition of $\mathbf{L}_0 \mathord \restriction$ we have
\begin{align*}
    &g_{1n}=f_{2n} - W^{-1}\Lambda (W f_{1n}) - (d-2) f_{1n},\\
    &g_{2n}= -W^{-1}\Lambda(W f_{2n}) - f_{2n} + W^{-1} \Delta(W f_{1n})- (d-2)f_{2n}, 
\end{align*}
where $\mathbf{f}_n=(f_{1n},f_{2n})$ and $\mathbf{g}_n=(g_{1n},g_{2n})$. Testing the equations with a function in $C_c^\infty\left(\mathbb{B}^d\right)$, integrating by parts, and taking the limit $n \rightarrow \infty$ yields the claims.
\end{proof}
\begin{lemma}\label{gammar}
     Let $0<\varepsilon <\frac{1}{2}$ and $\omega_{d,k,\varepsilon}$ be the corresponding constant from \Cref{freesemigroup}. There exists $R > 0$ such that $\sigma(\mathbf{L}) \cap \Gamma_R = \emptyset$, where
    \begin{align*}
        \Gamma_R:= \{z \in \mathbb{C} \mid \operatorname{Re}(z) > \frac{\omega_{d,k,\varepsilon}-(d-2)}{2}, \abs{z}> R\}.
    \end{align*}
    Furthermore, we have
    \begin{align*}
        \sup_{\lambda \in \Gamma_R} \norm{(\lambda \mathbf{I}-\mathbf{L})^{-1}}{\mathcal{L}(\hilspacewrad{k})} < \infty.
    \end{align*}
\end{lemma}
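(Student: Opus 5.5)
The plan is to write $\lambda\mathbf{I}-\mathbf{L} = (\lambda\mathbf{I}-\overline{\mathbf{L}_0\mathord\restriction})(\mathbf{I}-\mathbf{R}_{\mathbf{L}_0}(\lambda)\mathbf{L}')$ for $\operatorname{Re}\lambda > \omega_{d,k,\varepsilon}-(d-2)$, where $\mathbf{R}_{\mathbf{L}_0}(\lambda):=(\lambda\mathbf{I}-\overline{\mathbf{L}_0\mathord\restriction})^{-1}$. By \Cref{freesemigroup} and the Hille--Yosida bound, this resolvent exists on that half-plane and satisfies
\begin{align*}
\norm{\mathbf{R}_{\mathbf{L}_0}(\lambda)}{\mathcal{L}(\hilspacewrad{k})}\le \frac{M_{d,k,\varepsilon}}{\operatorname{Re}\lambda-(\omega_{d,k,\varepsilon}-(d-2))}.
\end{align*}
Uniformly on $\Gamma_R$ this gives only a bound of the form $2M_{d,k,\varepsilon}/\abs{\omega_{d,k,\varepsilon}-(d-2)}$, because $\omega_{d,k,\varepsilon}-(d-2)<0$. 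So the goal is to show $\norm{\mathbf{R}_{\mathbf{L}_0}(\lambda)\mathbf{L}'}{} \le \frac12$ for $\lambda\in\Gamma_R$ with $R$ large. A Neumann series then inverts $\mathbf{I}-\mathbf{R}_{\mathbf{L}_0}(\lambda)\mathbf{L}'$ with norm at most $2$. This yields both $\lambda\in\rho(\mathbf{L})$ and the uniform bound $\norm{(\lambda-\mathbf{L})^{-1}}{}\le 2\sup_{\Gamma_R}\norm{\mathbf{R}_{\mathbf{L}_0}(\lambda)}{}<\infty$.

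To obtain the smallness, fix $\mathbf{f}=(f_1,f_2)$ and set $\mathbf{u}=\mathbf{R}_{\mathbf{L}_0}(\lambda)\mathbf{L}'\mathbf{f}$, so $(\lambda-\overline{\mathbf{L}_0\mathord\restriction})\mathbf{u}=(0,-Vf_1)$. By \Cref{techlem}, the first component gives
\begin{align*}
u_2 = (\lambda+d-2)u_1 + W^{-1}\Lambda(Wu_1).
\end{align*}
In other words, the first component of the resolvent applied to data with vanishing first entry gains a factor $\lambda^{-1}$ relative to the second. Concretely, $\lambda u_1 = u_2 - (d-2)u_1 - W^{-1}\Lambda(Wu_1)$, so
\begin{align*}
\abs{\lambda}\norm{Wu_1}{H^{k-1}}\lesssim \norm{\mathbf{u}}{\hilspacew{k}}\lesssim \norm{Vf_1 W}{H^{k-1}}.
\end{align*}
This shows $\norm{Wu_1}{H^{k-1}}\lesssim \abs{\lambda}^{-1}\norm{\mathbf{f}}{}$, but only at one derivative less than needed.

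The standard way around this is the trick from the blowup literature. Write
\begin{align*}
\mathbf{L}'\mathbf{R}_{\mathbf{L}_0}(\lambda)\mathbf{L}'\mathbf{f} = (0,-Vu_1).
\end{align*}
Here $\mathbf{L}'$ only needs $Wu_1\in H^{k-1}$ to land in $\hilspacew{k}$, since the second slot carries the $H^{k-1}$ norm. Hence
\begin{align*}
\norm{\mathbf{L}'\mathbf{R}_{\mathbf{L}_0}(\lambda)\mathbf{L}'}{}\lesssim\abs{\lambda}^{-1}.
\end{align*}
I would therefore work with the factorization on the other side, $\lambda-\mathbf{L}=(\mathbf{I}-\mathbf{L}'\mathbf{R}_{\mathbf{L}_0}(\lambda))(\lambda-\overline{\mathbf{L}_0\mathord\restriction})$, and square the perturbation. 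Since
\begin{align*}
(\mathbf{L}'\mathbf{R}_{\mathbf{L}_0}(\lambda))^2 = \mathbf{L}'\mathbf{R}_{\mathbf{L}_0}(\lambda)\mathbf{L}'\,\mathbf{R}_{\mathbf{L}_0}(\lambda)
\end{align*}
has norm $\lesssim \abs{\lambda}^{-1}$ uniformly on the half-plane, $\mathbf{I}-(\mathbf{L}'\mathbf{R}_{\mathbf{L}_0})^2$ is invertible for $\abs{\lambda}>R$. Then
\begin{align*}
\mathbf{I}-\mathbf{L}'\mathbf{R}_{\mathbf{L}_0}(\lambda)
\end{align*}
is invertible, with inverse $(\mathbf{I}+\mathbf{L}'\mathbf{R}_{\mathbf{L}_0})(\mathbf{I}-(\mathbf{L}'\mathbf{R}_{\mathbf{L}_0})^2)^{-1}$, bounded uniformly in $\lambda\in\Gamma_R$. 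Composing with the uniformly bounded $\mathbf{R}_{\mathbf{L}_0}(\lambda)$ gives the claim.

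The main obstacle is the estimate $\abs{\lambda}\norm{Wu_1}{H^{k-1}}\lesssim\norm{\mathbf{u}}{}$. It requires justifying the distributional identity of \Cref{techlem} with $\mathbf{u}$ in the domain, and checking that $W^{-1}\Lambda(W\,\cdot)$ maps $H^k(\B^d;W)$ to $H^{k-1}(\B^d;W)$ boundedly. The latter holds since $W^{-1}\Lambda(Wu)=\Lambda u+(d-3)\frac{\abs{y}^2}{1-\abs{y}^2}u$ and $W\cdot$ that equals $\Lambda(Wu)$, a first-order operator on $Wu\in H^k$. Care is also needed that the domain constraint $\operatorname{Re}\lambda>\frac{\omega-(d-2)}{2}$ keeps $\lambda$ a fixed distance into the resolvent half-plane of $\overline{\mathbf{L}_0\mathord\restriction}$, so all constants are uniform.
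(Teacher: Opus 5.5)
Your argument is correct, but it takes a detour that the paper avoids.

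\textbf{What is shared.} Both proofs use the factorization $\lambda\mathbf{I}-\mathbf{L}=(\mathbf{I}-\mathbf{L}'\mathbf{R}_{\mathbf{L}_0}(\lambda))(\lambda\mathbf{I}-\overline{\mathbf{L}_0\mathord\restriction})$. They also rest on the same key observation. The first-component identity from \Cref{techlem} gains a factor $|\lambda|^{-1}$ on the first component of the resolvent, at the cost of one derivative. This is affordable because $\mathbf{L}'$ writes into the second slot, which is measured only in $H^{k-1}$.

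\textbf{How the paper proceeds.} The paper applies this identity to $\mathbf{f}=\mathbf{R}_{\mathbf{L}_0}(\lambda)\mathbf{g}$ for an \emph{arbitrary} $\mathbf{g}$. This gives $(\lambda+d-2)Wf_1=Wf_2-\Lambda(Wf_1)+Wg_1$. The extra term costs only $\|Wg_1\|_{H^{k-1}}\le\|\mathbf{g}\|_{\mathcal{H}^k(\mathbb{B}^d;\mathbf{W})}$. Hence $\|\mathbf{L}'\mathbf{R}_{\mathbf{L}_0}(\lambda)\|\lesssim|\lambda+d-2|^{-1}$ directly, and a plain Neumann series finishes.

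\textbf{How you proceed.} You use the identity only for data of the form $\mathbf{L}'\mathbf{f}$, with vanishing first entry. That yields $\|\mathbf{L}'\mathbf{R}_{\mathbf{L}_0}\mathbf{L}'\|\lesssim|\lambda|^{-1}$. You then recover invertibility via $(\mathbf{L}'\mathbf{R}_{\mathbf{L}_0})^2$ and the identity $(\mathbf{I}-\mathbf{L}'\mathbf{R}_{\mathbf{L}_0})(\mathbf{I}+\mathbf{L}'\mathbf{R}_{\mathbf{L}_0})=\mathbf{I}-(\mathbf{L}'\mathbf{R}_{\mathbf{L}_0})^2$. This is valid: the inverse is two-sided because every factor is a function of $\mathbf{L}'\mathbf{R}_{\mathbf{L}_0}$, and all factors are uniformly bounded on $\Gamma_R$. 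It is simply one step longer.

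\textbf{What each buys.} The squaring trick is more robust. It would equally have rescued your first factorization, since $\|(\mathbf{R}_{\mathbf{L}_0}\mathbf{L}')^2\|\le\|\mathbf{R}_{\mathbf{L}_0}\|\,\|\mathbf{L}'\mathbf{R}_{\mathbf{L}_0}\mathbf{L}'\|\lesssim|\lambda|^{-1}$. The paper's direct estimate is shorter and gives the explicit bound $\|(\mathbf{I}-\mathbf{L}'\mathbf{R}_{\mathbf{L}_0}(\lambda))^{-1}\|\le 2$ on $\Gamma_R$.
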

\begin{proof}
    As a consequence of the growth estimate $\norm{\mathbf{S}_0\mathord \restriction (s)}{\hilspacewrad{k}} \leq M_{d,k,\varepsilon} e^{(\omega_{d,k,\varepsilon}-(d-2))s}$ from \Cref{freesemigroup} the integral representation of the free resolvent \cite[p.~55]{engnag}
    \begin{align*}
        \mathbf{R}(\lambda, \overline{\mathbf{L}_0 \mathord \restriction}):= (\lambda \mathbf{I}- \overline{\mathbf{L}_0 \mathord \restriction})^{-1}= \int_0^\infty e^{-\lambda s} \mathbf{S}_0 \mathord \restriction(s) \, \mathrm{d}s
    \end{align*}
    exists for all $\lambda \in \mathbb{C}$ satisfying $\operatorname{Re}(\lambda) > \omega_{d,k,\varepsilon}-(d-2)$. The \textit{Birman-Schwinger principle}, i.e.~the identity
    \begin{align*}
        \lambda \mathbf{I}- \mathbf{L}= (\mathbf{I}- \mathbf{L}'\mathbf{R}(\lambda, \overline{\mathbf{L}_0 \mathord \restriction}))(\lambda \mathbf{I}- \overline{\mathbf{L}_0 \mathord \restriction}),
    \end{align*}
    implies that for $\operatorname{Re}(\lambda) > \omega_{d,k,\varepsilon}-(d-2)$, $\lambda \mathbf{I}- \mathbf{L}$ is bounded invertible if and only if $\mathbf{I}-\mathbf{L}' \mathbf{R}(\lambda, \overline{\mathbf{L}_0})$ is bounded invertible.
    
    We show that $\mathbf{I}-\mathbf{L}' \mathbf{R}(\lambda, \overline{\mathbf{L}_0})$ is bounded invertible for all $\lambda \in \Gamma_R$ for some $R >0$ by a Neumann series argument. Let $\operatorname{Re}(\lambda) > \frac{\omega_{d,k,\varepsilon}-(d-2)}{2}$, $\mathbf{g} \in \hilspacewrad{k}$ and set $\mathbf{R}(\lambda,\overline{\mathbf{L}_0 \mathord \restriction})\mathbf{g}=: \mathbf{f}$. Thus $\mathbf{f} \in \mathcal{D}(\overline{\mathbf{L}_0 \mathord \restriction})$ and $(\lambda \mathbf{I}- \overline{\mathbf{L}_0 \mathord \restriction})\mathbf{f}= \mathbf{g}$. In particular, by \Cref{techlem} we have
    \begin{align*}
        g_1= -f_2 + W^{-1}\Lambda (W f_1) + (d-2+\lambda) f_1,
    \end{align*}
    i.e.
    \begin{align*}
        f_1= \frac{1}{d-2+\lambda} (f_2 - W^{-1}\Lambda (W f_1)+g_1).
    \end{align*}
    Hence 
    \begin{align*}
        \norm{\mathbf{L}' \mathbf{R}(\lambda,\overline{\mathbf{L}_0 \mathord \restriction})\mathbf{g}}{\hilspacew{k}} &= \norm{\begin{pmatrix}
            0\\ -V f_1
        \end{pmatrix}}{\hilspacew{k}} \\
        & \lesssim \frac{1}{\abs{d-2+\lambda}} \left(\norm{W f_2}{H^{k-1}(\mathbb{B}^d)} + \norm{\Lambda (W f_1)}{H^{k-1}(\mathbb{B}^d)} +\norm{W g_1}{H^{k-1}(\mathbb{B}^d)}\right),\\
        &\lesssim \frac{1}{\abs{d-2+\lambda}} (\norm{\mathbf{f}}{\hilspacew{k}}+ \norm{\mathbf{g}}{\hilspacew{k-1}})\\
        &\lesssim \frac{1}{\abs{d-2+\lambda}} \norm{\mathbf{g}}{\hilspacew{k}},
    \end{align*}
    where we used $\norm{\Lambda \cdot}{H^{k-1}(\mathbb{B}^d)} \lesssim \norm{\cdot}{H^{k}(\mathbb{B}^d)}$ in the third and 
    \begin{align*}
        \norm{\mathbf{f}}{\hilspacew{k}}=\norm{\mathbf{R}(\lambda,\overline{\mathbf{L}_0 \mathord \restriction})\mathbf{g}}{\hilspacew{k}} \leq {\frac{M_{d,k,\varepsilon}}{|\operatorname{Re}\lambda - (\omega_{d,k,\varepsilon}-(d-2))|}} \norm{\mathbf{g}}{\hilspacew{k}} \lesssim \norm{\mathbf{g}}{\hilspacew{k}}
    \end{align*} 
    in the fourth step. Note that we have used here that the distance of the real part of $\lambda$ to the growth estimate can be uniformly estimated away from $0$. Choosing $R > 0$ large enough yields $\norm{\mathbf{L}' \mathbf{R}(\lambda,\overline{\mathbf{L}_0 \mathord \restriction})}{\mathcal{L}(\hilspacew{k})} \leq \frac{1}{2}$ for all $\lambda \in \Gamma_R$. Hence $(\mathbf{I}- \mathbf{L}' \mathbf{R}(\lambda,\overline{\mathbf{L}_0 \mathord \restriction}))^{-1}$ exists as a Neumann series and $\norm{(\mathbf{I}- \mathbf{L}' \mathbf{R}(\lambda,\overline{\mathbf{L}_0 \mathord \restriction}))^{-1}}{\mathcal{L}(\hilspacew{k})} \leq 2$ for all $\lambda \in \Gamma_R$. The additional statement follows by
    \begin{align*}
        \sup_{\lambda \in \Gamma_R} \norm{(\lambda \mathbf{I}-\mathbf{L})^{-1}}{\mathcal{L}(\hilspacew{k})} &\leq \sup_{\lambda \in \Gamma_R} \norm{\mathbf{R}(\lambda,\overline{\mathbf{L}_0 \mathord \restriction})}{\mathcal{L}(\hilspacew{k})} \norm{(\mathbf{I}- \mathbf{L}' \mathbf{R}(\lambda,\overline{\mathbf{L}_0 \mathord \restriction}))^{-1}}{\mathcal{L}(\hilspacew{k})} \\
        &\leq {\frac{2 M_{d,k,\varepsilon}}{|\operatorname{Re}\lambda - (\omega_{d,k,\varepsilon}-(d-2))|}} \lesssim 1.
    \end{align*}
\end{proof}
\begin{remark}
    Note that the set $\Gamma_R$ can even be enlarged by choosing a weaker constraint on the distance to the connected part of the spectrum. For simplicity we choose the distance to be $\frac{\omega_{d,k,\varepsilon}-(d-2)}{2}$ but the proof is valid for any positive distance. However, we get sufficient information by using a coarser estimate.
\end{remark}
Combining the previous results we get the reduction to mode stability. That is, since $\mathbf{L}'$ is compact, \Cref{gammar} in conjunction with the \textit{analytic Fredholm theorem} \cite[p.~194]{simonbook} implies that $\lambda \mathbf{I}- \mathbf{L}$ is bounded invertible for all $\lambda \in \mathbb{C}$ with $\operatorname{Re}\lambda > \frac{\omega_{d,k,\varepsilon}-(d-2)}{2}$ except for a finite number of eigenvalues, each with finite algebraic multiplicity. Following conventional terminology, we call the eigenfunctions corresponding to these eigenvalues \textit{mode solutions}.

In order to locate mode solutions, we need to solve $(\lambda \mathbf{I}- \mathbf{L})\mathbf{f}=0.$ We show that potential mode solutions induce smooth solutions to a one-dimensional problem. Afterwards we rule out their occurrence.
\begin{lemma}\label{inducelemma}
     Any nonzero $\mathbf{f} \in \mathcal{D}(\mathbf{L})= \mathcal{D}(\overline{\mathbf{L}_0 \mathord \restriction}) \subset \hilspacewrad{k}$ which satisfies $(\lambda \mathbf{I}- \mathbf{L}) \mathbf{f}=0$ for $\lambda \in \mathbb{C}$ with $\operatorname{Re}(\lambda) > -1$ induces a nonzero smooth $\widetilde{f}_1^W \in C^\infty([0,1])$ which solves 
    \begin{align*}
            ((\cdot)^2-1) (\widetilde{f}_{1}^W)''+\frac{2(\lambda+d-1)(\cdot)^2+1-d}{(\cdot)} (\widetilde{f}_{1}^W)'+\left((\lambda+d-1)(\lambda+d-2)+V\right)\widetilde{f}_{1}^W = 0
    \end{align*}
    and satisfies
    \begin{align*}
        \widetilde{f}_1^W \simeq 1
    \end{align*}
    near both endpoints.
\end{lemma}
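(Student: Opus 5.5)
The plan is to undo the conformal weight and reduce the eigenvalue equation to a radial ODE for $h:=Wf_1$. I would then use Frobenius theory at the two regular singular points $\rho=0$ and $\rho=1$ to get smoothness and the endpoint behaviour.

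\emph{Step 1: reduction to a single equation.} Let $\mathbf{f}\in\mathcal{D}(\mathbf{L})$ be nonzero with $(\lambda\mathbf{I}-\mathbf{L})\mathbf{f}=0$, and set $h_j:=Wf_j$. Then $h_1\in H^k_{\operatorname{rad}}(\mathbb{B}^d)$ and $h_2\in H^{k-1}_{\operatorname{rad}}(\mathbb{B}^d)$. By \Cref{techlem}, and since $\mathbf{L}'$ acts on the weighted variables as multiplication by $-V$ (note $W(Vf_1)=Vh_1$), the eigenvalue equation gives, in the sense of distributions,
\begin{align*}
 h_2=(\Lambda+\mu)h_1,\qquad (\Lambda+\mu+1)h_2=\Delta h_1-Vh_1,\qquad \mu:=\lambda+d-2 .
\end{align*}
Eliminating $h_2$ and using $\Lambda^2=y^ky^\ell\partial_k\partial_\ell+\Lambda$ yields
\begin{align*}
 (\delta^{k\ell}-y^ky^\ell)\partial_k\partial_\ell h_1-2(\mu+1)\Lambda h_1-\big(\mu(\mu+1)+V\big)h_1=0 .
\end{align*}
This is a uniformly elliptic equation in $\mathbb{B}^d$ with smooth coefficients. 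By elliptic regularity $h_1\in C^\infty(\mathbb{B}^d)$. Writing $h_1(y)=\widetilde{f}^W_1(\lvert y\rvert)$ and multiplying by $-1$ gives exactly the ODE in the statement on $(0,1)$, because $\mu+1=\lambda+d-1$. The profile is nonzero: if $h_1\equiv0$ then $h_2=0$ and hence $\mathbf{f}=0$.

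\emph{Step 2: Frobenius analysis at $\rho=0$.} The point $\rho=0$ is regular singular with indicial exponents $0$ and $2-d$. In the second solution the exponent-$(2-d)$ term $\rho^{2-d}$ appears with nonzero leading coefficient (possibly together with a logarithmic term). Such a function is not in $H^1$ of the ball for $d\geq5$, so it is incompatible with $h_1\in H^k_{\operatorname{rad}}$. Hence $\widetilde{f}^W_1$ is a multiple of the analytic, even Frobenius solution $\rho\mapsto 1+O(\rho^2)$. In particular it is smooth up to $0$ and $\simeq1$ there.

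\emph{Step 3: Frobenius analysis at $\rho=1$.} Here the equation reads $2(\rho-1)h''+(2\lambda+d-1)h'+\dots=0$ to leading order. The indicial exponents are therefore $0$ and $s_\lambda:=\tfrac{3-d}{2}-\lambda$. For $\operatorname{Re}\lambda>-1$ we get $\operatorname{Re}s_\lambda<\tfrac{5-d}{2}\le0$. The second solution thus behaves like $(1-\rho)^{s_\lambda}$, possibly with logarithmic corrections when $s_\lambda\in\mathbb{Z}$ (then the exponent-$0$ solution may be the one carrying the log, but the singular branch still dominates). Such a function is not in $H^1$ near the sphere, contradicting $h_1\in H^k$ with $k\ge1$. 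So $\widetilde{f}^W_1$ is a nonzero multiple of the analytic exponent-$0$ solution at $1$. This multiple cannot vanish, since otherwise $\widetilde{f}^W_1\equiv0$. Consequently $\widetilde{f}^W_1\in C^\infty([0,1])$ and $\widetilde{f}^W_1\simeq1$ near $\rho=1$.

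The main obstacle is Step 3 in the resonant cases, where $s_\lambda$ is a nonpositive integer and logarithms appear. There one must argue carefully that the admissible branch is the analytic one, for example by showing that the $H^1$ condition forces the coefficient of the singular branch $(1-\rho)^{s_\lambda}$ to vanish. A secondary technical point is justifying the passage from the distributional identities of \Cref{techlem} to the classical ODE. Interior ellipticity handles this, and the endpoints are then treated entirely through the Frobenius expansions.
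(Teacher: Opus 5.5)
Your proposal is correct and follows essentially the same route as the paper. You reduce the eigenvalue equation for $h_1=Wf_1$ to the radial ODE (you via the distributional identities of \Cref{techlem} plus interior elliptic regularity, the paper via approximating sequences and one-dimensional weighted Sobolev norms), and then discard the singular Frobenius branches with exponents $2-d$ at $0$ and $-\lambda-\frac{d-3}{2}$ at $1$ by square-integrability of the function resp.\ its derivative. Two small corrections: the operator $(\delta^{k\ell}-y^ky^\ell)\partial_k\partial_\ell$ is only locally uniformly elliptic (its symbol is bounded below by $(1-\abs{y}^2)\abs{\xi}^2$), which suffices for interior regularity; and in the resonant case the logarithm always sits in the solution with the smaller exponent $s_\lambda$, never in the exponent-$0$ one, which is exactly why the admissible branch at $\rho=1$ is analytic.
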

\begin{proof}
    By definition of the closure, $\mathbf{f} \in \mathcal{D}\left(\overline{\mathbf{L}_0\mathord \restriction}\right) \subset \hilspacewrad{k}$ means that there exists a sequence $(\mathbf{f}_n)_{n \in \mathbb{N}} \subset \cinftyradw$ such that $\lim_{n \rightarrow \infty}\norm{\mathbf{f}-\mathbf{f}_n}{\hilspacew{k}}=0$ and $\lim_{n \rightarrow \infty}\norm{(\lambda \mathbf{I}-\mathbf{L})\mathbf{f}_n}{\hilspacew{k}}=0$. Denote $\mathbf{g}_n := (\lambda \mathbf{I}- \mathbf{L})\mathbf{f}_n \subset C_{\operatorname{rad}}^\infty\left(\overline{\mathbb{B}^d};W\right)^2$. Set $\lambda_1:= \lambda+d-1,\lambda_2:= \lambda+d-2$. By definition of $\mathbf{L}$ we have
    \begin{align*}
        \begin{cases}
            \lambda_2 f_{1n}-f_{2n}+ W^{-1}\Lambda(W f_{1n})=g_{1n},\\
            \lambda_1 f_{2n}+W^{-1}\Lambda(W f_{2n})-W^{-1}\Delta(W f_{1n})+Vf_{1n}=g_{2n},
        \end{cases}
    \end{align*}
    where $(f_{1n},f_{2n})=\mathbf{f}_n, (g_{1n},g_{2n})=\mathbf{g}_n.$ To simplify computations we use the one-to-one correspondence via $\mathbf{W}$ to consider $(\mathbf{W}\mathbf{f}_n)_{n \in \mathbb{N}}, (\mathbf{W}\mathbf{g}_n)_{n \in \mathbb{N}} \subset C_{\operatorname{rad}}^\infty(\overline{\mathbb{B}^d})^2$ instead. The above equation then turns into
    \begin{align*}
        \begin{cases}
            \lambda_2 f_{1n}^W-f_{2n}^W+ \Lambda f_{1n}^W=g_{1n}^W,\\
            \lambda_1 f_{2n}^W+\Lambda f_{2n}^W-\Delta f_{1n}^W+Vf_{1n}^W=g_{2n}^W,
        \end{cases}
    \end{align*}
    by $(f_{1n}^W,f_{2n}^W):= \mathbf{W}\mathbf{f}_n$ and $ (g_{1n}^W,g_{2n}^W):= \mathbf{W}\mathbf{g}_n$. In terms of the radial representatives, denoted with a tilde, the equation reads
    \begin{align*}
        \begin{cases}
            \lambda_2 \widetilde{f}_{1n}^W-\widetilde{f}_{2n}^W+ (\cdot) (\widetilde{f}_{1n}^{W})'=\widetilde{g}_{1n}^W,\\
            \lambda_1 \widetilde{f}_{2n}^W+(\cdot) (\widetilde{f}_{2n}^W)'-\frac{d-1}{(\cdot)} (\widetilde{f}_{1n}^W)'-(\widetilde{f}_{1n}^W)''+V \widetilde{f}_{1n}^W=\widetilde{g}_{2n}^W.
        \end{cases}
    \end{align*}
    Solving towards $\widetilde{f}_{1n}^W$ leads to
    \begin{align*}
        \begin{cases}
            \lambda_2 \widetilde{f}_{1n}^W+ (\cdot) (\widetilde{f}_{1n}^{W})'-\widetilde{g}_{1n}^W=\widetilde{f}_{2n}^W,\\
            ((\cdot)^2-1)(\widetilde{f}_{1n}^W)''+\frac{2 \lambda_1 (\cdot)^2+1-d}{(\cdot)} (\widetilde{f}_{1n}^W)'+\left(\lambda_1 \lambda_2 +V\right)\widetilde{f}_{1n}^W= (\cdot)(\widetilde{g}_{1n}^W)'+ \lambda_1 \widetilde{g}_{1n}^W+\widetilde{g}_{2n}^W.
        \end{cases}
    \end{align*}
Recall the assumptions on the convergence of $\mathbf{f}_n, \mathbf{g}_n$ in $\hilspacew{k}$ and the equivalence $\norm{h}{H^{k}(\B^d)} \simeq \norm{\abs{\cdot}^{\frac{d-1}{2}}\widetilde{h}}{H^k(0,1)}$ for every $h \in C_{\operatorname{rad}}^\infty(\overline{\mathbb{B}^d})$, see \cite[Lemma B.4]{ostermann21}. This implies that $\widetilde{f}_{1n}^W, \widetilde{g}_{1n}^W$ are Cauchy sequences in $H^k((0,1);(\cdot)^{\frac{d-1}{2}})$ with limiting functions $\widetilde{f}_{1}^W \neq 0,0$ respectively. In particular this implies $\widetilde{f}_{1}^W \in H^k((\delta,1))$ for every $\delta > 0$ which in turn implies distributional convergence. The distributional differential equation is then given by
\begin{align*}
    ((\cdot)^2-1) (\widetilde{f}_{1}^W)''+\frac{2 \lambda_1 (\cdot)^2+1-d}{(\cdot)} (\widetilde{f}_{1}^W)'+\left(\lambda_1 \lambda_2 +V\right)\widetilde{f}_{1}^W=0.
\end{align*}
On $(0,1)$ this an ordinary differential equation with smooth coefficients. Hence the solution is already classical \cite[p.~58]{hormanderbook}, and in a second step smooth.

We turn to the behaviour at the endpoints. We compute the Frobenius indices
\begin{align*}
    \{0, 2-d\} \ \text{at $0$} \hspace{2cm} \text{and} \ \hspace{2cm} \{0,-\lambda-\frac{d-3}{2}\} \ \text{at $1$},
\end{align*}
of $\widetilde{f}_1^W$. Near $r=0$ a fundamental system of solutions $(\phi_1,\phi_2)$ is given by
\begin{align*}
    &\phi_1(r):= r^0 \varphi_1(r), &\phi_2(r):= r^{2-d}\varphi_2(r)+ C \log(r)\phi_1(r)
\end{align*}
with analytic $\varphi_1, \varphi_2$ near $r=0$ with $\varphi_1(0)=1=\varphi_2(0)$. Now $0 \neq \widetilde{f}_1^W \in H^{k}\left((0,1);(\cdot)^\frac{d-1}{2}\right)$ dictates $\widetilde{f}_1^W$ to be a nonzero multiple of $\phi_1$ since $\phi_2$ is not in $H^k\left((0,1);(\cdot)^\frac{d-1}{2}\right)$ for all $d \geq 5$ and $k \in \N$.

Near $r=1$ a fundamental system of solutions $(\phi_1,\phi_2)$ is given by
\begin{align*}
    &\phi_1(r)= (1-r)^0 \varphi_1(r), &\phi_2(r)=(1-r)^{-\lambda-\frac{d-3}{2}}\varphi_2(r)+ C \log(1-r)\phi_1(r)
\end{align*}
again with analytic $\varphi_1, \varphi_2$ near $r=1$ with $\varphi_1(1)=1=\varphi_2(1)$.
We note that $\operatorname{Re}(-\lambda - \frac{d-3}{2}) <0$ by the assumption on $\lambda$. Hence $\phi_2'$ is not square-integrable. Thus the solution must be a nonzero multiple of $\phi_1$ near $r=1$.

In summary we have shown $\widetilde{f}_1^W \neq 0$ is smooth on all of $[0,1]$ and solves
\begin{align*}
    ((\cdot)^2-1) (\widetilde{f}_{1}^W)''+\frac{2(\lambda+d-1)(\cdot)^2+1-d}{(\cdot)} (\widetilde{f}_{1}^W)'+\left((\lambda+d-1)(\lambda+d-2)+V\right)\widetilde{f}_{1}^W=0
\end{align*}
in the classical sense and its behaviour near both endpoints is given by $\widetilde{f}_1^W(r) \simeq 1$.
\end{proof}
\subsection{Sturm-Liouville}
The previous lemma motivates us to investigate the existence of smooth solutions to
\begin{align}\label{smoothsol}
    ((\cdot)^2-1) f''+\frac{2(\lambda+d-1)(\cdot)^2+1-d}{(\cdot)} f'+\left((\lambda+d-1)(\lambda+d-2)+V\right) f=0.
\end{align}
We will use a standard argument from Sturm-Liouville theory. Towards this we reduce the equation into standard Liouville form. That is, consider
\begin{align*}
   f''+p f'+q f=0,
\end{align*}
with $p(r):=-\frac{2(\lambda+d-1)r^2+1-d}{r(1-r^2)}, q(r):= -\frac{(\lambda+d-1)(\lambda+d-2)+V(r)}{1-r^2}$. With $f(r)=h(r)g(r)$ where $h(r):=e^{-\frac{1}{2}P(r)}$, $P'(r)=p(r)$ we get the equation
\begin{align}\label{reducedode}
    -(1-r^2)^2g''(r)+ \left[\frac{\frac{(d-3)(d-1)}{4}-r^2(\frac{3d-9}{2})}{r^2}+(1-r^2)V(r)\right]g(r)= -(\lambda+1)(\lambda+d-4)g(r).
\end{align}
Let us also record that (up to a multiplicative constant) $h(r)=r^\frac{1-d}{2}(1-r^2)^\frac{1-d-2\lambda}{4}$.
\begin{remark}
    Note that the two solutions to Eq.~\eqref{smoothsol}, $f_1(r):=\frac{1}{d-4+r^2}$ for $\lambda=4-d$ and $f_2(r):=\frac{(1-r^2)^\frac{5-d}{2}}{d-4+r^2}$ for $\lambda=-1$, induced by the symmetries of time-translation and time-translation composed with conformal involution respectively, are transformed to the same solution $g_0(r):=\frac{r^{\frac{d-1}{2}}(1-r^2)^\frac{7-d}{4}}{d-4+r^2}.$ (Note however that $f_2$ is not smooth for $d > 5$; for $d=5$ the functions $f_1$ and $f_2$ coincide.)
\end{remark}
From \Cref{inducelemma} we know that for $\operatorname{Re}\lambda > -1$ the behaviour of a solution $\phi$ to Eq.~\eqref{smoothsol} induced by an eigenfunction of the original problem is determined by $\phi(r) \simeq 1$ near $0$ and $\phi(r) \simeq 1$ near $1$. A solution $\phi$ is transformed into a solution $\phi_h$ of Eq.~\eqref{reducedode} that behaves like $\phi_h(r) \simeq r^\frac{d-1}{2}$ near $0$ and $\phi_h(r) \simeq (1-r)^\frac{d-1+2\lambda}{4}$ near $1$. Hence $\phi_h$ lies in the space $L^2((0,1);\frac{1}{1-(\cdot)^2})$ for $\operatorname{Re}\lambda > -1$ and vice-versa which yields a one-to-one correspondence. This inspires us to switch to the $L^2$-setting, since here the operator in question will be self-adjoint.

So we are lead to consider the singular Sturm-Liouville operator from Eq.~\eqref{reducedode}.
\begin{definition}
We denote by $\tau$ the formal differential operator
\begin{align*}
    \tau g:=-\frac{1}{w} \left[g''+ q g\right],
\end{align*}
where $w(r)=\frac{1}{(1-r^2)^2}$, $q(r):=-\left[\frac{\frac{(d-3)(d-1)}{4}-r^2(\frac{3d-9}{2})}{r^2(1-r^2)^2}+\frac{V(r)}{1-r^2}\right]$. Define the operator $T$ by
\begin{align*}
    &T:D(T) \subset L^2((0,1);\sqrt{w}) \rightarrow L^2((0,1);\sqrt{w}), &Tf:= \tau f,
\end{align*} 
with domain $D(T):=\{u \in L^2((0,1);\sqrt{w}) \mid u \in AC_{\operatorname{loc}}(0,1), u' \in AC_{\operatorname{loc}}(0,1), T u \in L^2((0,1);\sqrt{w})\}$. 
\end{definition}
\begin{remark}
    As is standard we require minimal assumptions on the domain to make sense of the differentiation occurring in $\tau$. It is readily seen that $D(T)$ is general enough to include the transformed solutions $\phi_h$ if they exist.
\end{remark}
Let $\mu:= -(\lambda+1)(\lambda+d-4)$.
Frobenius analysis (or our beforehand discussion) reveals that the Frobenius indices for solutions to $\tau g=\mu g$ are
$\{\frac{d-1}{2},-\frac{d-3}{2}\}$ at $0$ and $\{\frac{2 \pm \sqrt{(d-5)^2-4 \mu}}{4}\}$ at $1$. In particular, for $\mu = 4-d$ (which corresponds to $\lambda=0$) we have $\{\frac{d-1}{2},-\frac{d-3}{2}\}$ at $0$ and $\{\frac{d-1}{4},-\frac{d-1}{4}\}$ at $1$. At each endpoint one behaviour is not $L^2((0,1);\sqrt{w})$-integrable, which reveals that $T$ is in the limit-point case at both endpoints by \textit{Weyl's alternative} \cite[p.~82, Theorem 5.6]{Weylbook}. According to Weyl theory of self-adjoint extensions \cite[p.~83, Theorem 5.7]{Weylbook}, being limit-point at both ends implies that $T$ defined on $D(T)$ coincides with the closure of the restriction $T\mathord\restriction \{u \ \text{has compact support in} \ (0,1)\}$ which is a self-adjoint operator with compact resolvent. Hence its spectrum consists only of a countable number of real, simple eigenvalues.

Ruling out eigenvalues $\mu<0$ in the self-adjoint problem implies that there exist no eigenvalues $\operatorname{Re}\lambda>-1$ in the original problem by the following argument. Let $\lambda= a+bi$. Suppose $a>-1$. We have $\operatorname{Re}\mu=-a^2+b^2-(d-3)a-d+4$ and $\operatorname{Im}\mu=-b(2a+d-3).$ Eigenvalues in the self-adjoint problem are real, hence $\operatorname{Im}\mu=0$ and the assumption on $a$ implies $b=0$. Hence $\operatorname{Re}\mu=-a^2-(d-3)a-d+4 < 0$ by the assumption on $a$.

We will rule out eigenvalues $\mu < 0$ by using the solution $g_0(r)=\frac{r^{\frac{d-1}{2}}(1-r^2)^\frac{7-d}{4}}{d-4+r^2}$ from above which solves $\tau g_0=0$ and has no zero on $(0,1)$. From the Frobenius analysis we know that there exist unique solutions to 
\begin{align*}
    \tau g_\mu= \mu g_\mu, \hspace{1cm} g_\mu(0)=0, \hspace{1cm} g_\mu^{\left(\frac{d-1}{2}\right)}(0)=1.
\end{align*}
The Sturm comparison theorem then reads as follows.
\begin{lemma}
    Let $\mu \in \R_0^-$. Then the unique solution $g_\mu$ to
    \begin{align*}
        \tau g_\mu= \mu g_\mu, \hspace{1cm} g_\mu(0)=0, \hspace{1cm} g_\mu^{\left(\frac{d-1}{2}\right)}(0)=1,
    \end{align*}
    has no zero on $(0,1)$.
\end{lemma}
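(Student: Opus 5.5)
The plan is a Sturm comparison argument between $g_\mu$ and the explicit positive solution $g_0$ of $\tau g_0=0$, phrased through a Wronskian identity. Write the equation $\tau g=\mu g$ as $g''+qg+\mu w g=0$. Since $\mu\le 0$ and $w>0$, the equation for $g_\mu$ has a coefficient $q+\mu w\le q$, so $g_\mu$ should oscillate no more than $g_0$, which has no zero on $(0,1)$.

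First I normalize. By the Frobenius analysis at $0$, with indices $\{\frac{d-1}{2},-\frac{d-3}{2}\}$, the solution $g_\mu$ is the recessive one: $g_\mu(r)\sim c\,r^{\frac{d-1}{2}}$ with $c>0$. The same holds for $g_0(r)=\frac{r^{\frac{d-1}{2}}(1-r^2)^{\frac{7-d}{4}}}{d-4+r^2}$, which is positive on $(0,1)$. For $\mu=0$, uniqueness of the recessive solution gives $g_\mu=c\,g_0$, so $g_\mu$ has no zero and this case is done. Now let $\mu<0$ and suppose, for contradiction, that $r_0\in(0,1)$ is the first zero of $g_\mu$. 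Then $g_\mu>0$ on $(0,r_0)$ and $g_\mu'(r_0)\le 0$; in fact $g_\mu'(r_0)<0$, since otherwise $g_\mu\equiv 0$ by uniqueness.

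Next comes the Wronskian identity. Let $\mathcal W:=g_0 g_\mu'-g_0' g_\mu$. Subtracting the two equations gives
\begin{align*}
\mathcal W'=g_0g_\mu''-g_0''g_\mu=-\mu\, w\, g_0 g_\mu,
\end{align*}
which is strictly positive on $(0,r_0)$. Both solutions behave like $r^{\frac{d-1}{2}}$ near $0$, so $\mathcal W(r)=O(r^{d-2})$ and $\lim_{r\to0^+}\mathcal W(r)=0$. Integrating from $0$ to $r_0$ then yields
\begin{align*}
g_0(r_0)g_\mu'(r_0)=\mathcal W(r_0)=-\mu\int_0^{r_0} w\,g_0g_\mu\,\mathrm dr>0.
\end{align*}
On the other hand, $g_0(r_0)>0$ and $g_\mu'(r_0)<0$ make the left-hand side negative, which is a contradiction. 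Hence $g_\mu$ has no zero on $(0,1)$.

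The step that needs the most care is the behaviour at $r=0$. Here I have to justify that the Frobenius series for $g_\mu$ at the regular singular point $0$ gives $g_\mu\sim r^{\frac{d-1}{2}}$ together with a matching derivative asymptotic $g_\mu'\sim \frac{d-1}{2}c\,r^{\frac{d-3}{2}}$, so that the Wronskian boundary term actually vanishes. The index difference $d-2$ is an integer, but logarithms can only enter the subdominant solution, so the recessive solution is a pure power series times $r^{\frac{d-1}{2}}$, and this justification is routine. The integral $\int_0^{r_0} w g_0 g_\mu$ is finite because $w$ is bounded on $[0,r_0]$ with $r_0<1$. Nothing near $r=1$ is needed for this argument.
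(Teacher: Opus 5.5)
Your argument is correct and is essentially the paper's proof: a Sturm comparison of $g_\mu$ with the explicit positive solution $g_0$ up to a hypothetical first zero, with the case $\mu=0$ settled by $g_\mu = c\,g_0$. The only difference is the identity you integrate. You use the Wronskian identity $\mathcal W' = -\mu\, w\, g_0 g_\mu$ and reach a sign contradiction at $r_0$, whereas the paper integrates the Picone identity and concludes $g_\mu\equiv 0$ on $[0,y_0]$. Your check that the boundary term at $r=0$ vanishes, because both solutions are $r^{\frac{d-1}{2}}$ times a power series, is more careful than the paper's brief appeal to $g_\mu(0)=g_0(0)=0$.
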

\begin{proof}
    We follow the proof in \cite[p.~34, Theorem 2.6.3]{SLTheorybook}. The case $\mu =0$ is handled by the explicit description of $g_0$. For $\mu <0$ we argue by contradiction. Assume $y_0$ to be the first zero of $g_\mu$, which would exist since $\tau$ is non-oscillatory at $0$. The \textit{Picone identity}
\begin{align*}
    \left[\frac{g_\mu}{g_0}(g_\mu' g_0-g_\mu g_0')\right]'= \frac{(g_\mu' g_0- g_\mu g_0')^2}{(g_0)^2} - w \mu (g_\mu)^2
\end{align*}
is computed straightforwardly. Integrating from $0$ to $y_0$ yields
\begin{align*}
    0=\int_0^{y_0} \frac{(g_\mu' g_0- g_\mu g_0')^2}{(g_0)^2} -\mu\int_0^{y_0} w (g_\mu)^2
\end{align*}
where we used $g_\mu(0)=g_0(0)=0=g_\mu(y_0)$. Since $\mu <0$ and $w>0$ on $(0,y_0)$ we conclude $g_\mu \equiv 0$ on $[0,y_0]$. This contradicts $g_\mu^{\left(\frac{d-1}{2}\right)}(0)=1$, which completes the proof.
\end{proof}
An adjusted version of the Sturm oscillation theorem then finishes the full argument.
\begin{lemma}\label{sturmoscillation}
    The self-adjoint operator $T$ has no eigenvalue $\mu <0$.
\end{lemma}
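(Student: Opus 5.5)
Suppose, for contradiction, that $T$ has an eigenvalue $\mu<0$ with eigenfunction $g\in D(T)$, $g\neq 0$. The plan is to show that $g$ must be a multiple of $g_\mu$ from the previous lemma, and then to derive a contradiction from a Wronskian/Picone identity on $(0,1)$ against $g_0$. This is the same comparison that proved the previous lemma, now run up to the endpoint $r=1$ instead of up to a first zero.

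\textbf{Step 1: behaviour at $r=0$.} Since $T$ is limit-point at $0$, and the Frobenius indices there are $\{\frac{d-1}{2},-\frac{d-3}{2}\}$, the branch $r^{-\frac{d-3}{2}}$ is not in $L^2((0,1);\sqrt{w})$ near $0$ for $d\geq 5$. Hence $g$ is a nonzero multiple of $g_\mu$, and we may normalise $g=g_\mu$. By the previous lemma, $g_\mu$ has no zero on $(0,1)$; without loss of generality $g_\mu>0$ there.

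\textbf{Step 2: behaviour at $r=1$.} Both $g_0$ and $g_\mu$ lie in $L^2$ near $1$, so they must follow the subdominant Frobenius branch there. For $g_0$ this is explicit: $g_0\sim(1-r)^{\frac{7-d}{4}}$, which is the root $\frac{2+\sqrt{(d-5)^2}}{4}=\frac{d-3}{4}$, shifted appropriately.

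I expect the main obstacle to be the exact exponent bookkeeping here. The formula for $g_0$ gives $(1-r^2)^{\frac{7-d}{4}}$, whereas the indices listed for $\mu=0$ suggest $\frac{2\pm(d-5)}{4}$, i.e.\ $\frac{d-3}{4}$ and $\frac{7-d}{4}$. So $g_0$ sits on the \emph{larger} root only when $d=5$; for $d>5$ it sits on the smaller one, consistent with $g_0$ possibly failing to be an $L^2$-eigenfunction. This does not matter for the argument. What is needed is only that the boundary term
\[
\frac{g_\mu}{g_0}\bigl(g_\mu' g_0-g_\mu g_0'\bigr)=g_\mu g_\mu'-\frac{g_\mu^2 g_0'}{g_0}
\]
tends to $0$ as $r\to 1^-$. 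For $\mu<0$ the $L^2$ branch has exponent $\alpha_\mu=\frac{2+\sqrt{(d-5)^2-4\mu}}{4}>\frac{d-3}{4}$, so $g_\mu\sim (1-r)^{\alpha_\mu}$ and $g_\mu g_\mu'\sim(1-r)^{2\alpha_\mu-1}$. Since $g_0'/g_0=O((1-r)^{-1})$, the second term is also $O((1-r)^{2\alpha_\mu-1})$. As $2\alpha_\mu-1>\frac{d-5}{2}\geq 0$, the boundary term vanishes at $1$. This holds even for $d=5$, because $\mu<0$ makes the inequality strict. At $0$ the boundary term vanishes as well: $g_\mu$ and $g_0$ both behave like $r^{\frac{d-1}{2}}$, so the ratio is bounded and $g_\mu g_\mu'\to 0$.

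\textbf{Step 3: conclusion.} Integrate the Picone identity over $(0,1)$. Since $g_0>0$ on $(0,1)$, the identity is valid on the whole interval, and using Step 2 we get
\[
0=\int_0^1\frac{(g_\mu' g_0-g_\mu g_0')^2}{g_0^2}\,\mathrm{d}r-\mu\int_0^1 w\,g_\mu^2\,\mathrm{d}r .
\]
Both integrals are nonnegative, and $-\mu>0$, so $\int_0^1 w\,g_\mu^2=0$. Hence $g_\mu\equiv 0$, which contradicts $g\neq 0$.

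Alternatively, one could phrase the last step as an oscillation count. The self-adjoint operator $T$ has simple eigenvalues, and the $n$-th eigenfunction has $n$ zeros. Combined with the zero-free positive solution $g_0$ at level $0$, this shows that $0$ lies at or below the bottom of the spectrum. The direct Picone argument above avoids having to invoke the full oscillation theorem for this singular problem, so I would present that version.
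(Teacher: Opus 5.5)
Your proof is correct, but it takes a different route from the paper. The paper applies the Picone identity only up to a putative first zero, in the preceding lemma, to show that $g_\nu$ has no zero in $(0,1)$ for every $\nu\le 0$. It then concludes with the oscillation theorem \cite[Theorem 1.2]{teschlzeroes}: the number of eigenvalues of $T$ in $[\nu,0)$ equals the number of zeros of $g_0$ minus that of $g_\nu$, which is $0-0$.

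You instead integrate Picone over all of $(0,1)$ against an actual eigenfunction. The $L^2((0,1);\sqrt{w})$ condition at both endpoints fixes its Frobenius branches: $r^{\frac{d-1}{2}}$ at $0$ and $(1-r)^{\alpha_+}$ at $1$. The explicit formula for $g_0$ then shows that the boundary term $g_\mu g_\mu'-g_\mu^2 g_0'/g_0$ is $O\bigl((1-r)^{2\alpha_+-1}\bigr)$, with $2\alpha_+-1=\tfrac12\sqrt{(d-5)^2-4\mu}>0$.

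What each route buys:
\begin{itemize}
\item Your argument is more elementary and self-contained. It avoids checking the hypotheses of an oscillation theorem for a problem singular at both ends.
\item It does not need the preceding lemma at all, since the identity only divides by $g_0>0$. Your appeal to $g_\mu>0$ is harmless but unused.
\item The cost is the endpoint analysis at $r=1$, which the zero-counting route sidesteps.
\item The oscillation route also gives an eigenvalue count, which is more than is needed here.
\end{itemize}

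One slip to fix: the first sentence of your Step 2 is false. Since $g_0\sim(1-r)^{\frac{7-d}{4}}$ with $\frac{7-d}{4}\le\frac12$, $g_0$ is not in $L^2$ near $1$ for any $d\ge 5$, including $d=5$. As you observe, only $g_0>0$ and $g_0'/g_0=O\bigl((1-r)^{-1}\bigr)$ are actually used, so the argument is unaffected.
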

\begin{proof}
    We invoke \cite[Theorem 1.2]{teschlzeroes}\footnote{Note that there is a small typo in Theorem 1.2 in \cite{teschlzeroes}: $N(c)$ is supposed to denote the number of zeroes of $u_2$ minus the number of zeroes of $u_1$, not the other way round.} which states that there are as many eigenvalues of $T$ in the interval $[\nu,0)$ as there are zeroes of $g_0$ minus the zeroes of $g_{\nu}$, i.e.~$0-0=0$. Since this is true for any $\nu < 0$ there are no eigenvalues $\mu <0$.
\end{proof}
\subsection{The linearised evolution is uniformly exponentially stable} To summarise this section we have now shown that the half-plane $\{\lambda \in \C \mid \operatorname{Re}\lambda > -1\}$ lies in the resolvent set $\rho(\mathbf{L})$ of the generator $\mathbf{L}$ of the linearised evolution. This together with \Cref{gammar} implies the exponential stability of the linearised evolution in every odd dimension greater than or equal to $5$ by an application of the Gearhart-Prüss-Greiner Theorem, see e.g.~\cite[p.~302, Theorem 1.11]{engnag}.
\begin{theorem}\label{linearisedsemigroup}
    The operator $\mathbf{L}: \mathcal{D}(\mathbf{L}) \subset \hilspacewrad{k} \rightarrow \hilspacewrad{k}$ generates the strongly continuous semigroup $\mathbf{S}: [0,\infty) \rightarrow \mathcal{L}(\hilspacewrad{k})$. Additionally, there exists $\delta >0$ such that $\mathbf{S}$ satisfies the growth estimate
    \begin{align*}
        \norm{\mathbf{S}(s)\mathbf{f}}{\hilspacew{k}} \lesssim e^{-\delta s}\norm{\mathbf{f}}{\hilspacew{k}}
    \end{align*}
    for all $\mathbf{f} \in \hilspacewrad{k}$ and $s \geq 0$. 
\end{theorem}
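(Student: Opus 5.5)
The plan is to apply the Gearhart–Prüss–Greiner theorem on the Hilbert space $\hilspacewrad{k}$. The generation statement is already \Cref{linearisedevoexist}, so only the growth bound needs proof. By Gearhart–Prüss–Greiner it suffices to find $\delta>0$ such that the closed half-plane $\{\operatorname{Re}\lambda \geq -\delta\}$ lies in $\rho(\mathbf{L})$ and the resolvent is uniformly bounded there. Then the growth bound $\omega_0(\mathbf{S})$ is strictly less than $-\delta$ (or at most, after shrinking $\delta$), which gives $\norm{\mathbf{S}(s)}{} \lesssim e^{-\delta s}$.

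\textbf{Step 1 (spectral localisation).} Fix $0<\varepsilon<\frac12$. Since $k>d/2$, we may take $\omega_{d,k,\varepsilon}=\varepsilon$, so the threshold $\frac{\omega_{d,k,\varepsilon}-(d-2)}{2}=\frac{\varepsilon-(d-2)}{2}$ lies well below $-1$ for $d\geq5$. (If $k$ is general, one can still use that the threshold is $<-1/2$ for the relevant $k$.) By the analytic Fredholm argument following \Cref{gammar}, the spectrum in $\{\operatorname{Re}\lambda>\frac{\omega-(d-2)}{2}\}$ consists of finitely many eigenvalues. By \Cref{inducelemma}, any eigenvalue with $\operatorname{Re}\lambda>-1$ produces a smooth nontrivial solution of \eqref{smoothsol} that is $\simeq1$ at both endpoints. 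This transforms into an $L^2((0,1);\sqrt w)$ eigenfunction of $T$ with eigenvalue $\mu=-(\lambda+1)(\lambda+d-4)$. The computation before \Cref{sturmoscillation} shows that such a $\mu$ would be real and negative, which \Cref{sturmoscillation} excludes. Hence $\{\operatorname{Re}\lambda>-1\}\subset\rho(\mathbf{L})$.

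\textbf{Step 2 (uniform resolvent bound).} Only finitely many eigenvalues lie in the region $\operatorname{Re}\lambda>\frac{\omega-(d-2)}{2}$, and all of them have real part $\leq -1$. So there is $\delta\in(0,1)$ such that the strip $\{-\delta\leq\operatorname{Re}\lambda\}$ contains no spectrum. On the unbounded part $\Gamma_R$ the resolvent is uniformly bounded by \Cref{gammar}. The remaining set $\{\operatorname{Re}\lambda\geq-\delta,\ |\lambda|\leq R\}$ is compact and contained in $\rho(\mathbf{L})$. Since $\lambda\mapsto(\lambda-\mathbf{L})^{-1}$ is continuous on $\rho(\mathbf{L})$, the resolvent is bounded on this compact set. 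Combining the two regions gives $\sup_{\operatorname{Re}\lambda\geq-\delta}\norm{(\lambda-\mathbf{L})^{-1}}{}<\infty$.

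\textbf{Step 3 (conclusion).} Apply Gearhart–Prüss–Greiner to the rescaled semigroup $e^{\delta s}\mathbf{S}(s)$, whose generator is $\mathbf{L}+\delta$ and whose resolvent is bounded on the closed right half-plane. This yields $\norm{e^{\delta' s}\mathbf{S}(s)}{}\lesssim1$ for any $\delta'<\delta$.

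The main point to check carefully is Step 1, specifically the correspondence between eigenfunctions of $\mathbf{L}$ and $L^2$ eigenfunctions of $T$ for complex $\lambda$. The issue is that $T$ is self-adjoint only as an operator in $\mu$, while $h$ depends on $\lambda$. One must verify that for complex $\lambda$ with $\operatorname{Re}\lambda>-1$ the endpoint behaviours $r^{(d-1)/2}$ and $(1-r)^{(d-1+2\lambda)/4}$ still give square integrability against $w$. This holds because $\operatorname{Re}\frac{d-1+2\lambda}{4}>\frac{d-3}{4}\geq\frac12$. Once that is confirmed, the realness argument for $\mu$ goes through as stated.
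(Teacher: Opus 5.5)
Your proposal is correct and follows the paper's route. You exclude the spectrum in $\{\operatorname{Re}\lambda>-1\}$ via \Cref{inducelemma} and the Sturm--Liouville argument, bound the resolvent on $\Gamma_R$ by \Cref{gammar} and on the remaining compact set by continuity, and then invoke Gearhart--Pr\"uss--Greiner. The only difference is cosmetic: the paper applies the theorem directly on $\operatorname{Re}\lambda\geq 0$, since uniform exponential stability already provides some $\delta>0$, so your preliminary shift by $\delta$ is unnecessary but harmless, provided that for small $k$ you also take $\delta<\bigl|\frac{\omega_{d,k,\varepsilon}-(d-2)}{2}\bigr|$.
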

\begin{proof}
    The generation of the semigroup was the content of \Cref{linearisedevoexist}. To prove the estimate, denote by $\mathbb{H}_0:= \{\lambda \in \C \mid \operatorname{Re}\lambda > 0 \}$ the open half-plane right of $0$ and recall the set $\Gamma_R= \{z \in \mathbb{C} \mid \operatorname{Re}(z) > \frac{\omega_{d,k,\varepsilon}-(d-2)}{2}, \abs{z}> R\}$ from \Cref{gammar}. To apply \cite[p.~302, Theorem 1.11]{engnag} we need the uniform bound
    \begin{align*}
        \sup_{\lambda \in \mathbb{H}_0} \norm{\mathbf{R}(\lambda,\mathbf{L})}{\hilspacew{k}} < \infty.
    \end{align*}
    Divide the closed half-plane $\overline{\mathbb{H}_0}=\{\lambda \in \C \mid \operatorname{Re}\lambda \geq 0\}$ into $\overline{\mathbb{H}_0} \cap \Gamma_R$ and $\overline{\mathbb{H}_0} \cap (\Gamma_R)^C$. The estimate on the first set follows by \Cref{gammar}. The estimate on the second set follows by the fact that the resolvent is analytic with respect to $\lambda$, hence it is bounded on the compact set $\overline{\mathbb{H}_0} \cap (\Gamma_R)^C$. Invoking the Gearhart-Prüss-Greiner Theorem yields the desired uniform exponential stability.
\end{proof}

%% file: NonlinearEvo.tex
\section{Nonlinear evolution}
Since we will make use of the algebra properties of the appearing Sobolev spaces, throughout this section let $d,k \in \N$ with
    \begin{align*}
        d \geq 5 \ \text{odd} \ \ \text{and} \ \ k > d/2.
    \end{align*}
    We have now assembled all prerequisites to consider the full equation \eqref{fulleq}
\begin{align*}
    0=\left(\widehat{\square}+V(y)\right) v(s,y)+ N(v(s,\cdot))(y),
\end{align*}
where
\begin{align*}
        &\widehat{\square}v(s,y)=\left[\partial_s^2+2 \partial_s \Lambda_y+ (y^k y^\ell- \delta^{k\ell}) \partial_k \partial_\ell + \frac{2d-3-3\abs{y}^2}{1-\abs{y}^2}\partial_s +4\Lambda_y+2\right] v(s,y),\\
        &V(y)v(s,y)=-\frac{8(d-3)(d-4)}{(d-4+\abs{y}^2)^2}v(s,y),\\
        &N(v(s,\cdot))(y)=\frac{d-3}{2}\frac{\sin\left(2 \abs{y}\left(u^*_\Psi(y)+v(s,y)\right)\right)-2\abs{y}v(s,y)-\sin\left(2 \abs{y}u^*_\Psi(y)\right)}{\abs{y}^3}-V(y)v(s,y),
\end{align*}
with $u^*_\Psi(y)= \frac{2}{\abs{y}}\arctan{\left(\frac{\abs{y}}{\sqrt{d-4}}\right)}$ being the solution to Eq.~\eqref{geowaveeq}, which is static in FHSC.
To summarise the discussion in the introduction and the results of the previous sections, we record that, in the semigroup picture, we formally have
\begin{align}\label{semigroupode}
    \pt_s \Phi_v(s)= \mathbf{L}\Phi_v(s)+ \mathbf{N}(\Phi_v(s))+\begin{bmatrix}
        0\\
       \left[(\overline{\square} \phi + \mathcal{V}\phi+\mathcal{N}(\phi))\circ \Psi\right](s,\cdot)
    \end{bmatrix}
\end{align}
for the variable pair introduced in Eq.~\eqref{variablepair}
\begin{align*}
    \Phi_v(s)(y):=\begin{bmatrix}
        v(s,y)\\
        \left(\partial_s+ \Lambda_y+(d-2)+(d-3)\frac{\abs{y}^2}{1-\abs{y}^2}\right)v(s,y)
    \end{bmatrix}
\end{align*}
with $v= \phi \circ \Psi$ and the nonlinear remainder operator densely defined for $\mathbf{f}=(f_1,f_2) \in \cinftyradw$ by 
\begin{align*}
    \mathbf{N}(\mathbf{f}):=\begin{bmatrix}
        0\\ N(f_1)
    \end{bmatrix}.
\end{align*}

Our aim in this section is to incorporate the nonlinear term into the functional-analytic framework constructed in the previous sections. 
\subsection{Properties of the nonlinearity}
To this end, we first investigate the specific form of $N$ and prove a local Lipschitz bound, which will be the main ingredient in the subsequent contraction argument. From the discussion in the introduction, it follows that the nonlinear remainder term is a second-order term of a Taylor expansion independent of time.
\begin{lemma}\label{buildingblock}
    Let $(y,z) \in \R^d \times \C$. Let $\widetilde{F}(y,z):=\frac{d-3}{2}\frac{\sin(2\abs{y}z)-2\abs{y}z}{\abs{y}^3}$. Denote $\widetilde{F}'(y,z):=\pt_z \widetilde{F}(y,z)$. Then we have
    \begin{align*}
        N(f)(y)= \widetilde{F}\left(y,u_\Psi^*(y)+f(y)\right)-\widetilde{F}\left(y,u_\Psi^*(y)\right)-\widetilde{F}'\left(y,u_\Psi^*(y)\right)f(y)
    \end{align*}
    for $f \in C^\infty_{\operatorname{rad}}(\overline{\B^d};W)$. Furthermore, $N(f) \in C^\infty_{\operatorname{rad}}(\overline{\B^d};W)$ for all $f \in C^\infty_{\operatorname{rad}}(\overline{\B^d};W).$
\end{lemma}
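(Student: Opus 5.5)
The proof has two parts. First, an algebraic identity reduces the claimed representation of $N(f)$ to the single fact $\widetilde F'(y,u^*_\Psi(y))=V(y)$. Second, an integral form of Taylor's remainder exhibits $N(f)$ as $f^2$ times a smooth radial factor, and this gives the weighted smoothness.

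\emph{The identity.} From the definition of $\widetilde F$,
\begin{align*}
\widetilde F(y,u^*_\Psi+f)-\widetilde F(y,u^*_\Psi)=\frac{d-3}{2}\,\frac{\sin(2\abs{y}(u^*_\Psi+f))-2\abs{y}f-\sin(2\abs{y}u^*_\Psi)}{\abs{y}^3},
\end{align*}
because the terms $\pm 2\abs{y}u^*_\Psi$ cancel. Comparing with the definition of $N$, it remains to verify $\widetilde F'(y,u^*_\Psi(y))=V(y)$. Differentiating in $z$ gives
\begin{align*}
\widetilde F'(y,z)=(d-3)\,\frac{\cos(2\abs{y}z)-1}{\abs{y}^2}.
\end{align*}
Set $a:=\sqrt{d-4}$ and $\theta:=\arctan(\abs{y}/a)$, so that $2\abs{y}u^*_\Psi=4\theta$. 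Then
\begin{align*}
\cos 4\theta-1=-2\sin^2 2\theta, \qquad \sin 2\theta=\frac{2\tan\theta}{1+\tan^2\theta}=\frac{2a\abs{y}}{a^2+\abs{y}^2}.
\end{align*}
Hence $\widetilde F'(y,u^*_\Psi)=-8(d-3)a^2/(a^2+\abs{y}^2)^2=V(y)$, as required. This is the same computation that produced $\mathcal V$ in the introduction, now in FHSC.

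\emph{Smoothness and the weight.} Taylor's formula with integral remainder in the $z$-variable gives
\begin{align*}
N(f)(y)=f(y)^2\int_0^1(1-t)\,\widetilde F''\bigl(y,u^*_\Psi(y)+tf(y)\bigr)\,\mathrm{d}t,
\end{align*}
where
\begin{align*}
\widetilde F''(y,z)=-2(d-3)\,\frac{\sin(2\abs{y}z)}{\abs{y}}.
\end{align*}
The function $(r,z)\mapsto \sin(2rz)/r$ is entire and even in $r$, so it is an entire function of $(r^2,z)$. Thus $(y,z)\mapsto \widetilde F''(y,z)$ is smooth on $\R^d\times\C$ and radial in $y$.

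Likewise $u^*_\Psi(y)=\frac{2}{\abs{y}}\arctan(\abs{y}/a)$ is an analytic function of $\abs{y}^2$ near $[0,1]$, so $u^*_\Psi\in C^\infty_{\operatorname{rad}}(\overline{\B^d})$. Now take $f\in C^\infty_{\operatorname{rad}}(\overline{\B^d};W)\subset C^\infty_{\operatorname{rad}}(\overline{\B^d})$. Differentiating under the integral shows that
\begin{align*}
H(y):=\int_0^1(1-t)\,\widetilde F''(y,u^*_\Psi+tf)\,\mathrm{d}t
\end{align*}
lies in $C^\infty_{\operatorname{rad}}(\overline{\B^d})$. All derivatives are bounded because $\overline{\B^d}$ is compact and the integrand is jointly smooth.

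For the weight, write $f=W^{-1}g$ with $g:=Wf\in C^\infty_{\operatorname{rad}}(\overline{\B^d})$. Since $d$ is odd, $W^{-1}=(1-\abs{y}^2)^{\frac{d-3}{2}}$ is a polynomial. Therefore
\begin{align*}
W N(f)=W\,W^{-2}g^2H=W^{-1}g^2H\in C^\infty_{\operatorname{rad}}(\overline{\B^d}),
\end{align*}
and also $N(f)=W^{-2}g^2H$ is smooth and radial. Together these give $N(f)\in C^\infty_{\operatorname{rad}}(\overline{\B^d};W)$.

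The main point needing care is the apparent singularity $\abs{y}^{-3}$ at $y=0$. It is resolved by the evenness and analyticity argument for $\sin(2rz)/r$ and for $u^*_\Psi$ above. The weight bookkeeping works because $N$ is quadratic in $f$: one factor $W^{-1}$ from $f^2$ absorbs the weight $W$, and this uses the oddness of $d$.
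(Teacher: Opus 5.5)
Your proposal is correct and follows the paper's route. The identity is the direct comparison of both sides, which rests on $\widetilde{F}'(y,u^*_\Psi(y))=V(y)$ as you verify, and the regularity comes from the Taylor-remainder form $N(f)=f^2\int_0^1(1-t)\widetilde{F}''(\cdot,u^*_\Psi+tf)\,\mathrm{d}t$, which the paper simply cites from its reference and which you justify in more detail (a minor point: since $f\in C^\infty_{\operatorname{rad}}(\overline{\B^d})$ is already part of the definition of the weighted space, $WN(f)=(Wf)\,f\,H$ is smooth without appealing to the oddness of $d$ at that step).
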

\begin{proof}
    The proof of the identity is a straightforward comparison of both sides. The additional statement follows from the form of $N$ introduced in \cite[Lemma 4.1]{MR4778061}.
\end{proof}
The point is that the quadratic term $\mathbf{N}$ enjoys a local Lipschitz bound. 
\begin{lemma}\label{nonlineraityextends}
    The map $\mathbf{N}: \cinftyradw \rightarrow \cinftyradw$ has a unique extension to a map $\mathbf{N}: \hilspacewrad{k} \rightarrow \hilspacewrad{k+1} \subset \hilspacewrad{k}$ with the property that for any $M >0$ the estimates
    \begin{align}
        \norm{\mathbf{N}(\mathbf{f})}{\hilspacew{k}} &\leq \norm{\mathbf{N}(\mathbf{f})}{\hilspacew{k+1}} \lesssim \norm{\mathbf{f}}{\hilspacew{k}}^2,\\
        \norm{\mathbf{N}(\mathbf{f})-\mathbf{N}(\mathbf{g})}{\hilspacew{k}} &\lesssim  \norm{\mathbf{f}-\mathbf{g}}{\hilspacew{k}} \left(\norm{\mathbf{f}}{\hilspacew{k}}+\norm{\mathbf{g}}{\hilspacew{k}}\right)
    \end{align}
    hold for every $\mathbf{f},\mathbf{g} \in \hilspacewrad{k}$ with $\norm{\mathbf{f}}{\hilspacew{k}},\norm{\mathbf{g}}{\hilspacew{k}} \leq M$. Furthermore, $\mathbf{N}$ is continuously differentiable in the Fr\'echet sense.
\end{lemma}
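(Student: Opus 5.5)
Since $\mathbf{N}(\mathbf{f})=(0,N(f_1))$, everything reduces to the scalar map $f\mapsto N(f)$. We need
\begin{align*}
\norm{W N(f)}{H^{k}(\B^d)}\lesssim \norm{W f}{H^k(\B^d)}^2,\qquad
\norm{W (N(f)-N(g))}{H^{k}(\B^d)}\lesssim \norm{W(f-g)}{H^k(\B^d)}\bigl(\norm{Wf}{H^k(\B^d)}+\norm{Wg}{H^k(\B^d)}\bigr),
\end{align*}
for $f,g\in C^\infty_{\operatorname{rad}}(\overline{\B^d};W)$ with weighted norms at most $M$. These bounds control $\mathbf N$ in $\hilspacew{k+1}$, since the second component is measured in $H^{k}$ there. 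Extension to $\hilspacewrad{k}$ by density, and uniqueness of the extension, then follow from the Lipschitz bound. The inclusion $\hilspacew{k+1}\subset\hilspacew{k}$ with norm inequality is the nested property already noted.

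\emph{Step 1: algebraic form of $N$.} By \Cref{buildingblock} and Taylor's formula,
\begin{align*}
N(f)(y)=f(y)^2\int_0^1(1-t)\,\pt_z^2\widetilde F\bigl(y,u^*_\Psi(y)+tf(y)\bigr)\,\mathrm{d}t,\qquad \pt_z^2\widetilde F(y,z)=-2(d-3)\frac{\sin(2\abs{y}z)}{\abs{y}}.
\end{align*}
The function $(y,z)\mapsto \sin(2\abs{y}z)/\abs{y}$ is an entire function of $(\abs{y}^2,z)$, since $\sin(2rz)/r$ is odd in $r$ divided by $r$. Hence $N(f)=f^2\,G(\abs{y}^2,f)$ with $G$ smooth, and this is the structure behind \cite[Lemma 4.1]{MR4778061}. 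Similarly, $N(f)-N(g)=(f-g)(f+g)\,\widetilde G_1+\dots$, or more cleanly $N(f)-N(g)=(f-g)\int_0^1 \bigl[\widetilde F'(y,u^*_\Psi+g+t(f-g))-\widetilde F'(y,u^*_\Psi)\bigr]\mathrm{d}t$, where the bracket is itself of the form $h\,\widetilde G(\abs{y}^2,u^*_\Psi,h)$ with $h$ a convex combination of $f,g$.

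\emph{Step 2: weight bookkeeping.} Write $f=W^{-1}F$ with $F:=Wf$, where $W^{-1}=(1-\abs{y}^2)^{\frac{d-3}{2}}$ is smooth since $d$ is odd. Then
\begin{align*}
W N(f)=W\,W^{-2}F^2\,G=(1-\abs{y}^2)^{\frac{d-3}{2}}F^2\,G\bigl(\abs{y}^2,(1-\abs{y}^2)^{\frac{d-3}{2}}F\bigr).
\end{align*}
Thus one factor $W^{-1}$, a smooth bounded function with bounded derivatives, is left over. This is exactly what makes $\mathbf N$ map into the weighted space: quadratic nonlinearities gain one power of the vanishing weight. Now $k>d/2$ makes $H^k(\B^d)$ a Banach algebra embedded in $L^\infty$. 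The Moser-type composition estimate $\norm{\Phi(y,F)}{H^k}\le C(M)$ for smooth $\Phi$ and $\norm{F}{H^k}\le M$ then gives $\norm{WN(f)}{H^k}\lesssim \norm{F}{H^k}^2$. The same algebra and composition estimates applied to the difference formula give the Lipschitz bound. The main care needed is to use the $H^k$ composition lemma on the bounded domain $\B^d$ with smooth dependence on $y$ up to the boundary; this is standard via extension operators.

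\emph{Step 3: Fr\'echet differentiability.} The candidate derivative is
\begin{align*}
D\mathbf N(\mathbf f)\mathbf h=\bigl(0,\,[\widetilde F'(\cdot,u^*_\Psi+f_1)-\widetilde F'(\cdot,u^*_\Psi)]h_1\bigr).
\end{align*}
By the same weight argument, $W$ times the bracket times $h_1$ equals $(W^{-1}$-type smooth factor$)\cdot F\cdot Wh_1\cdot\Phi$. This is bounded in $H^k$ by $\norm{\mathbf f}{}\norm{\mathbf h}{}$. The remainder $N(f+h)-N(f)-DN(f)h$ is, by Taylor again, $h^2\int(1-t)\pt_z^2\widetilde F\,\mathrm dt$, which is $O(\norm{h}{}^2)$ in the weighted norm. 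Continuity of $\mathbf f\mapsto D\mathbf N(\mathbf f)$ follows from the Lipschitz dependence of the composition on $F$. I expect the main obstacle to be a clean statement of the composition and weight-gain estimate in Step 2. Everything else is routine once $N(f)=f^2G$ is written with $f=W^{-1}F$.
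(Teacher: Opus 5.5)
Your proposal is correct and follows essentially the paper's route. Both write $N(f)=f^2\int_0^1(1-t)\,\partial_z^2\widetilde F(\cdot,u^*_\Psi+tf)\,\mathrm{d}t$, put one factor of $f$ in the weighted norm and the other, together with the composition term bounded via the Schauder/Moser lemma, in the unweighted Banach algebra $H^k(\B^d)$, using that $W^{-1}$ is smooth for odd $d$ (this is exactly the paper's (AP) estimate). Your Lipschitz bound, proved directly in the stronger $H^k$ norm of the second component, and your explicit Taylor check of the Fr\'echet remainder are slightly more complete than the paper's write-up.
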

\begin{proof}
    Note that the algebra property of $\sobspace{k}$ implies
    \begin{align}\label{AP}
        \norm{ab}{\sobspacew{k}} \lesssim \norm{a}{\sobspacew{k}}\norm{b}{\sobspace{k}}. \tag{AP}
    \end{align}
    for all $a \in \sobspacew{k}$, $b \in \sobspace{k}$.
    
    Towards the first inequality we deduce by \Cref{buildingblock}, the form of $N$ introduced in \cite[Lemma 4.1]{MR4778061} and \eqref{AP} that
    \begin{align*}
        \norm{\mathbf{N}(\mathbf{f})}{\hilspacew{k+1}} &= \norm{N(f_1)}{\sobspacew{k}}\\
        &\lesssim \norm{f_1}{\sobspacew{k}}\norm{f_1 \int_0^1 (1-z) \widetilde{F}''\left(\cdot,u_\Psi^*(\cdot)+z f_1(\cdot)\right) \, \mathrm{d}z}{\sobspace{k}}\\
        &\lesssim \norm{f_1}{\sobspacew{k}}^2 \int_0^1 (1-z)\norm{\widetilde{F}''\left(\cdot,u_\Psi^*(\cdot)+z f_1(\cdot)\right)}{\sobspace{k}} \, \mathrm{d}z
    \end{align*}
    for all $\mathbf{f} \in \cinftyradw$, where $\widetilde{F}''(y,x):=\pt_x^2 \widetilde{F}(y,x)= -2(d-3)\frac{\sin\left(2\abs{y}x\right)}{\abs{y}}$. Note that $\widetilde{F}''$ is smooth in $y$ and $\widetilde{F}''(y,0)=0$. Using \textit{Schauder's lemma} \cite[p.~217, Theorem 6.2.2]{Moserbook} we can estimate
    \begin{align*}
        \norm{\widetilde{F}''\left(\cdot,u_\Psi^*(\cdot)+z f_1(\cdot)\right)}{\sobspace{k}} 
        &\lesssim \varphi\left(\norm{u_\Psi^*+z f_1}{\sobspace{k}}\right)
    \end{align*}
    for some smooth $\varphi$.
    In total, noticing $\norm{f_1}{\sobspace{k}} \lesssim \norm{f_1}{\sobspacew{k}} \lesssim \norm{\mathbf{f}}{\hilspacew{k}}$, and using that $\varphi$ is bounded on compacta we get
    \begin{align*}
        \norm{\mathbf{N}(\mathbf{f})}{\hilspacew{k+1}} \lesssim \norm{\mathbf{f}}{\hilspacew{k}}^2
    \end{align*}
    for all $\norm{\mathbf{f}}{\hilspacew{k}} \leq M$. This proves the first claim for all $\mathbf{f} \in C^\infty_{\operatorname{rad}}(\overline{\B^d};W)^2$ with $\norm{\mathbf{f}}{\hilspacew{k}} \leq M$.
    
    Similarly towards the second claim we use \Cref{buildingblock}, \cite[Lemma 4.1]{MR4778061} and \eqref{AP} to estimate
    \begin{align*}
        \norm{\mathbf{N}(\mathbf{f})-\mathbf{N}(\mathbf{g})}{\hilspacew{k}} &\lesssim \norm{N(f_1)-N(g_1)}{\sobspace{k}}\\
        &\lesssim \norm{f_1-g_1}{\sobspacew{k}}\left(\norm{f_1}{\sobspacew{k}}+\norm{g_1}{\sobspacew{k}}\right)\\
        &\quad \times \int_0^1 \int_0^1 \norm{\widetilde{F}''\left(\cdot,u_\Psi^*(\cdot)+s(t f_1(\cdot)+(1-t)g_1(\cdot))\right)}{\sobspace{k}} \, \mathrm{d}t \mathrm{d}s
    \end{align*}
    for all $\mathbf{f}, \mathbf{g} \in C^\infty_{\operatorname{rad}}(\overline{\B^d};W)^2$.
    As for the first claim we conclude
    \begin{align*}
        \int_0^1 \int_0^1 \norm{\widetilde{F}''\left(\cdot,u_\Psi^*(\cdot)+s(t f_1(\cdot)+(1-t)g_1(\cdot))\right)}{\sobspace{k}} \, \mathrm{d}t \mathrm{d}s \lesssim 1
    \end{align*}
    for all $\norm{\mathbf{f}}{\hilspacew{k}}, \norm{\mathbf{g}}{\hilspacew{k}} \leq M$. This implies the second estimate for all $\mathbf{f},\mathbf{g} \in \cinftyradw$ with $\norm{\mathbf{f}}{\hilspacew{k}}, \norm{\mathbf{g}}{\hilspacew{k}} \leq M$.
    The estimates allow to uniquely continuously extend $\mathbf{N}$ to a map $\mathbf{N}:\hilspacewrad{k} \rightarrow \hilspacewrad{k}$ with image in $\hilspacewrad{k+1}$ satisfying the claimed estimates.
    
    For the additional statement we first observe that $\mathbf{N}$ is Fr\'echet differentiable, i.e.~that the prospective Fr\'echet derivative of $\mathbf{N}$ at $\mathbf{f}$, given by the multiplication operator
    \begin{align*}
        \mathbf{h \mapsto }D\mathbf{N}(\mathbf{f})\mathbf{h} =\begin{bmatrix}
            0 \\ N'(f_1)h_1
        \end{bmatrix},
    \end{align*}
    where $N'(f_1):= 2f_1 \int_0^1 (1-z)\widetilde{F}''(\cdot,u_\Psi^*(\cdot)+z f_1(\cdot)) \, \mathrm{d}z+ f_1^2 \int_0^1 (1-z)\widetilde{F}'''(\cdot,u_\Psi^*(\cdot)+z f_1(\cdot))z \, \mathrm{d}z$ with $\widetilde{F}'''(y,z):=\pt_z \widetilde{F}''(y,z)$, is bounded as an operator $\hilspacewrad{k} \rightarrow \hilspacewrad{k}$ for every $\mathbf{f} \in \hilspacewrad{k}$. This follows readily since $N'(f_1) \in \sobspacew{k}$ by a similar argument to the one in the proof above for $N(f_1) \in \sobspacew{k}$. 
    
    Lastly we prove continuity of the map $\hilspacewrad{k} \rightarrow \mathcal{L}(\hilspacewrad{k})$, $\mathbf{f} \mapsto D\mathbf{N}(\mathbf{f})$. We note that by \eqref{AP} we have
    \begin{align*}
        \norm{D\mathbf{N}(\mathbf{f})-D\mathbf{N}(\mathbf{g})}{\mathcal{L}(\hilspacewrad{k})}&=\sup_{\norm{\mathbf{h}}{\hilspacew{k}}=1}\norm{(D\mathbf{N}(\mathbf{f})-D\mathbf{N}(\mathbf{g}))\mathbf{h}}{\hilspacew{k}}\\
        &=\sup_{\norm{\mathbf{h}}{\hilspacew{k}}=1}\norm{(N'(f_1)-N'(g_1))h_1}{\sobspacew{k-1}}\\
        &\leq\sup_{\norm{\mathbf{h}}{\hilspacew{k}}=1} \norm{(N'(f_1)-N'(g_1))h_1}{\sobspacew{k}} \\
        &\lesssim \norm{N'(f_1)-N'(g_1)}{\sobspace{k}}
    \end{align*}
    and hence it is sufficient to prove continuity of $N': \sobspacewrad{k} \rightarrow \sobspacerad{k}$. The fact that the occurring spaces are Banach algebras allows us to reduce this to continuity of every appearing term. Using the fundamental theorem of calculus for the maps $f_1 \mapsto {\widetilde{F}''}{}^{(}{}'{}^{)}\left(\cdot,u_\Psi^*(\cdot)+z f_1(\cdot) \right)$
    respectively, shows that
    \begin{align*}
        &{\widetilde{F}''}{}^{(}{}'{}^{)}\left(\cdot,u_\Psi^*(\cdot)+z f_1(\cdot) \right)-{\widetilde{F}''}{}^{(}{}'{}^{)}\left(\cdot,u_\Psi^*(\cdot)+z g_1(\cdot) \right)\\
        &= \int_0^1 {\widetilde{F}'''}{}^{(}{}'{}^{)}\left(\cdot,u_\Psi^*(\cdot)+z (t f_1(\cdot)+(1-t)g_1(\cdot)) \right)z(f_1(\cdot)-g_1(\cdot)) \, \mathrm{d}t.
    \end{align*}
    Smoothness of $x \mapsto \widetilde{F}''(y,x)$ yields that these terms are bounded by $\varphi(\norm{f_1-g_1}{\sobspacew{k}})$ for some smooth $\varphi$, again by Schauder's Lemma. It follows that $N'$ is locally Lipschitz continuous and in particular continuous. This completes the proof.
\end{proof}
\subsection{Notion of solution}
Since we just gained control over the nonlinearity $\mathbf{N}$ we can meaningfully apply Duhamel's principle to adopt a notion of solution from \cite[p.~184, Definition 1.1]{pazybook}.
\begin{definition}\label{defmildsol}
    Recall
    \begin{align*}
        &\mathbf{S}: [0,\infty) \rightarrow \mathcal{L}\left(\hilspacewrad{k}\right) \ \text{the semigroup from \Cref{linearisedsemigroup}},\\
        &\mathbf{L}: \mathcal{D}(\mathbf{L})\subset \hilspacewrad{k} \rightarrow \hilspacewrad{k} \ \text{its generator from \Cref{linearisedgeneratordef}},\\
        &\mathbf{N}: \hilspacewrad{k} \rightarrow \hilspacewrad{k} \ \text{the nonlinearity from \Cref{nonlineraityextends}}.
    \end{align*}
    Let $\mathbf{f} \in \hilspacewrad{k}.$\\
    A map $\mathbf{v} \in C\left([0,\infty),\hilspacewrad{k}\right) \cap C^1 \left((0,\infty),\hilspacewrad{k}\right)$ which satisfies $\mathbf{v}(s) \in \mathcal{D}(\mathbf{L})$ and
    \begin{align*}
    \begin{cases}
        &\partial_s \mathbf{v}(s)= \mathbf{L}\mathbf{v}(s)+ \mathbf{N}(\mathbf{v}(s)),\\
        &\mathbf{v}(0)=\mathbf{f},
    \end{cases}
    \end{align*}
    for all $s > 0$ is called a \textit{classical solution} to the abstract Cauchy problem.\\
    A map $\mathbf{v} \in C([0,\infty),\hilspacewrad{k})$ which satisfies
    \begin{align}\label{fixedpointequation}
        \mathbf{v}(s)= \mathbf{S}(s)\mathbf{f}+ \int_0^s \mathbf{S}(s-s')\mathbf{N}(\mathbf{v}(s')) \, \mathrm{d}s'
    \end{align}
    for all $s > 0$, is called a \textit{mild solution} to the abstract Cauchy problem.
\end{definition}
We now turn to forward-in-time stable solutions to the abstract Cauchy problem (ACP). To this end, we incorporate the decay of the linearised wave flow into an appropriate function space.
\begin{definition}
    Let $\delta >0$ be the growth constant from \Cref{linearisedsemigroup}. Define the Banach space $(\fixptspace{k}, \norm{\cdot}{\fixptspace{k}})$ by
    \begin{align*}
        &\fixptspace{k}:=\left\{\mathbf{v} \in C\left([0,\infty),\hilspacewrad{k}\right) \mid \norm{\mathbf{v}(s)}{\hilspacew{k}} \lesssim e^{-\delta s} \ \text{for all} \ s \geq 0\right\},\\
        &\norm{\mathbf{v}}{\fixptspace{k}}:=\sup_{s \in [0,\infty)}\left(e^{\delta s}\norm{\mathbf{v}(s)}{\hilspacew{k}}\right). 
    \end{align*}
\end{definition}
\subsection{Nonlinear asymptotic stability}
We carry out a standard fixed-point argument to show global well-posedness, i.e.~semi-global existence, uniqueness and continuous dependence on the data.
\begin{lemma}\label{fixptarg}
    There exists $0 < \varepsilon_0 < 1$ such that for all $0 < \varepsilon < \varepsilon_0$, there exists $C_{\varepsilon}> 1$ such that for all $\mathbf{f} \in \hilspacewrad{k}$ with $\norm{\mathbf{f}}{\hilspacew{k}} \leq \frac{\varepsilon}{C_\varepsilon}$ there is a unique $\mathbf{v}_{\mathbf{f}} \in \fixptspace{k}$ which satisfies
    \begin{align*}
        \mathbf{v}_{\mathbf{f}}(s)=\mathbf{S}(s)\mathbf{f}+ \int_0^s \mathbf{S}(s-s') \mathbf{N}(\mathbf{v}_{\mathbf{f}}(s')) \, \mathrm{d}s'
    \end{align*}
    for all $s \geq 0$. Furthermore,  $\mathbf{v}_\mathbf{f}$ satisfies $\norm{\mathbf{v}_{\mathbf{f}}}{\fixptspace{k}} \leq \varepsilon$ and we have
    \begin{align*}
        \norm{\mathbf{v}_{\mathbf{f}}-\mathbf{v}_{\mathbf{g}}}{\fixptspace{k}} \lesssim \norm{\mathbf{f}-\mathbf{g}}{\hilspacew{k}}
    \end{align*}
    for all $\mathbf{f},\mathbf{g} \in \hilspacewrad{k}$ with $\norm{\mathbf{f}}{\hilspacew{k}},\norm{\mathbf{g}}{\hilspacew{k}} \leq \frac{\varepsilon}{C_\varepsilon}$.
\end{lemma}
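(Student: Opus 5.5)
We set this up as a Banach fixed point argument for the Duhamel operator
\begin{align*}
    \mathbf{K}_{\mathbf{f}}(\mathbf{v})(s):=\mathbf{S}(s)\mathbf{f}+\int_0^s \mathbf{S}(s-s')\mathbf{N}(\mathbf{v}(s'))\,\mathrm{d}s',
\end{align*}
acting on the closed ball $\mathcal{X}_\varepsilon:=\{\mathbf{v}\in\fixptspace{k} \mid \norm{\mathbf{v}}{\fixptspace{k}}\leq\varepsilon\}$, which is a complete metric space. The linear input is \Cref{linearisedsemigroup}: $\norm{\mathbf{S}(s)}{\mathcal{L}(\hilspacewrad{k})}\leq C_0 e^{-\delta s}$. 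The nonlinear input is \Cref{nonlineraityextends} with $M=1$, which gives the quadratic and Lipschitz bounds on $\mathbf{N}$ for $\norm{\cdot}{\hilspacew{k}}\leq 1$. Since $\varepsilon<\varepsilon_0<1$, every $\mathbf{v}\in\mathcal{X}_\varepsilon$ satisfies $\norm{\mathbf{v}(s)}{\hilspacew{k}}\leq 1$, so these bounds apply pointwise in $s$.

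First, $\mathbf{K}_\mathbf{f}(\mathbf{v})$ lies in $C([0,\infty),\hilspacewrad{k})$. Strong continuity of $\mathbf{S}$ handles the first term. For the integral, $s'\mapsto\mathbf{N}(\mathbf{v}(s'))$ is continuous by the Lipschitz bound, and the Duhamel integral of a continuous function against a strongly continuous semigroup is continuous (standard, e.g.\ \cite{pazybook}). For the decay we estimate
\begin{align*}
    e^{\delta s}\norm{\mathbf{K}_\mathbf{f}(\mathbf{v})(s)}{\hilspacew{k}}\leq C_0\norm{\mathbf{f}}{\hilspacew{k}}+C_0 C_1\int_0^s e^{\delta s'}e^{-2\delta s'}\norm{\mathbf{v}}{\fixptspace{k}}^2\,\mathrm{d}s'\leq C_0\norm{\mathbf{f}}{\hilspacew{k}}+\frac{C_0C_1}{\delta}\varepsilon^2 .
\end{align*}
The integral converges because the quadratic nonlinearity produces the doubled rate $e^{-2\delta s'}$, which beats $e^{\delta s'}$. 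This is the only step where the argument uses something specific, and it is mild. We choose $\varepsilon_0$ with $\frac{C_0C_1}{\delta}\varepsilon_0\leq\frac12$ and $C_\varepsilon:=2C_0+1$. Then $\mathbf{K}_\mathbf{f}$ maps $\mathcal{X}_\varepsilon$ into itself whenever $\norm{\mathbf{f}}{\hilspacew{k}}\leq\varepsilon/C_\varepsilon$.

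Next comes the contraction. The same computation with the Lipschitz bound gives
\begin{align*}
    \norm{\mathbf{K}_\mathbf{f}(\mathbf{v})-\mathbf{K}_\mathbf{f}(\mathbf{w})}{\fixptspace{k}}\leq\frac{C_0C_2}{\delta}\cdot 2\varepsilon\,\norm{\mathbf{v}-\mathbf{w}}{\fixptspace{k}} .
\end{align*}
Shrinking $\varepsilon_0$ so that this factor is at most $\frac12$ makes $\mathbf{K}_\mathbf{f}$ a contraction. Banach's fixed point theorem then yields the unique $\mathbf{v}_\mathbf{f}\in\mathcal{X}_\varepsilon$ solving \eqref{fixedpointequation}.

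Uniqueness in all of $\fixptspace{k}$, not just the ball, needs care. Suppose $\mathbf{w}\in\fixptspace{k}$ is another solution. On any interval $[0,T]$ where both solutions stay bounded by $1$ in $\hilspacew{k}$, a Gronwall argument on the difference using the local Lipschitz bound gives agreement. A continuity argument then extends the agreement to all $s$. If one only wants uniqueness in the ball, as the phrasing ``there is a unique'' may intend, this step is unnecessary. I would state uniqueness via the Gronwall continuity argument to be safe.

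Finally, for Lipschitz dependence on the data, we write
\begin{align*}
    \mathbf{v}_\mathbf{f}-\mathbf{v}_\mathbf{g}=\mathbf{S}(\cdot)(\mathbf{f}-\mathbf{g})+\Big[\mathbf{K}_\mathbf{g}(\mathbf{v}_\mathbf{f})-\mathbf{K}_\mathbf{g}(\mathbf{v}_\mathbf{g})\Big] .
\end{align*}
Hence $\norm{\mathbf{v}_\mathbf{f}-\mathbf{v}_\mathbf{g}}{\fixptspace{k}}\leq C_0\norm{\mathbf{f}-\mathbf{g}}{\hilspacew{k}}+\frac12\norm{\mathbf{v}_\mathbf{f}-\mathbf{v}_\mathbf{g}}{\fixptspace{k}}$. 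Absorbing the last term into the left-hand side gives the claimed bound $\norm{\mathbf{v}_\mathbf{f}-\mathbf{v}_\mathbf{g}}{\fixptspace{k}}\lesssim\norm{\mathbf{f}-\mathbf{g}}{\hilspacew{k}}$.
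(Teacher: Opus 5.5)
Your proposal is correct and follows essentially the same route as the paper: a Banach fixed-point argument for the Duhamel map $\mathbf{K}_{\mathbf{f}}$ on the closed $\varepsilon$-ball of $\fixptspace{k}$, using the decay from \Cref{linearisedsemigroup} and the quadratic and Lipschitz bounds from \Cref{nonlineraityextends}, then Gr\"onwall for uniqueness in all of $\fixptspace{k}$ and the absorption argument for Lipschitz dependence on the data. Your version is somewhat more explicit than the paper's: you fix $M=1$, choose $C_\varepsilon = 2C_0+1$, check continuity of the Duhamel term, and use a continuation argument so that the Gr\"onwall step also covers solutions not known a priori to lie in the $\varepsilon$-ball.
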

\begin{proof}
    Let $0<\varepsilon_0$ be arbitrary but fixed and $0 < \varepsilon < \varepsilon_0$. Set
    \begin{align*}
        B_{\varepsilon}:=\{\mathbf{v} \in \fixptspace{k} \mid \norm{\mathbf{v}}{\fixptspace{k}} \leq \varepsilon \}
    \end{align*}
    the closed ball of radius $\varepsilon$ in $\fixptspace{k}$. For $\mathbf{f} \in \hilspacewrad{k}$ and $\mathbf{v} \in \fixptspace{k}$ define
    \begin{align*}
        \mathbf{K}_{\mathbf{f}}(\mathbf{v})(s):= \mathbf{S}(s)\mathbf{f}+ \int_0^s \mathbf{S}(s-s')\mathbf{N}(\mathbf{v}(s')) \, \mathrm{d}s'.
    \end{align*}
     The estimates on the nonlinearity from \Cref{nonlineraityextends} imply $\norm{\mathbf{N}(\mathbf{v}(s'))}{\hilspacew{k}} \lesssim e^{-2\delta s'}\varepsilon^2$ and \\
     $\norm{\mathbf{N}(\mathbf{u}(s'))-\mathbf{N}(\mathbf{v}(s'))}{\hilspacew{k}} \lesssim e^{-\delta s'}\varepsilon \norm{\mathbf{u}(s')-\mathbf{v}(s')}{\hilspacew{k}}$ for all $s' \geq 0$ and $\mathbf{u},\mathbf{v} \in B_{\varepsilon}$. This yields 
    \begin{align*}
        \norm{\mathbf{K}_{\mathbf{f}}(\mathbf{v})(s)}{\hilspacew{k}} &\lesssim e^{-\delta s}\norm{\mathbf{f}}{\hilspacew{k}}+ e^{-\delta s} \varepsilon^2,\\
        \norm{\mathbf{K}_{\mathbf{f}}(\mathbf{u})(s)-\mathbf{K}_{\mathbf{f}}(\mathbf{v})(s)}{\hilspacew{k}} &\lesssim e^{-\delta s} \varepsilon\int_0^s \norm{\mathbf{u}(s')-\mathbf{v}(s')}{\hilspacew{k}}\lesssim e^{-\delta s}\varepsilon \norm{\mathbf{u}-\mathbf{v}}{\fixptspace{k}}
    \end{align*}
    for all $\mathbf{u}, \mathbf{v} \in B_{\varepsilon}$. Hence $\varepsilon_0$ can be chosen such that for all $0< \varepsilon < \varepsilon_0$ there exists $C_\varepsilon> 1$ such that the map $B_{\varepsilon} \rightarrow B_{\varepsilon}, \mathbf{v} \mapsto \mathbf{K}_{\mathbf{f}}(\mathbf{v})$ is well-defined and a contraction for all $\mathbf{f} \in \fixptspace{k}$ with $\norm{\mathbf{f}}{\fixptspace{k}} \leq \frac{\varepsilon}{C_\varepsilon}$. Banach's fixed-point theorem reveals existence and uniqueness of $\mathbf{v}_{\mathbf{f}} \in B_\varepsilon$ such that $\mathbf{K}_{\mathbf{f}}(\mathbf{v}_{\mathbf{f}})=\mathbf{v}_{\mathbf{f}}$. Uniqueness in all of $\fixptspace{k}$ we conclude by Gr\"onwall's inequality using the first inequality in the second estimate above. The additional estimate is deduced by
    \begin{align*}
        \norm{\mathbf{K}_{\mathbf{f}}(\mathbf{v}_\mathbf{f})(s)-\mathbf{K}_{\mathbf{g}}(\mathbf{v}_\mathbf{g})(s)}{\hilspacew{k}} \leq l \norm{\mathbf{v}_{\mathbf{f}}-\mathbf{v}_{\mathbf{g}}}{\fixptspace{k}}+ \norm{\mathbf{f}-\mathbf{g}}{\hilspacew{k}}
    \end{align*}
    for all $s \geq 0$, which follows from triangle inequality with constant $l < 1$ for $\mathbf{K}_\mathbf{f}$ is a contraction. This finishes the proof.
\end{proof}
\begin{remark}
    Note that since $\mathbf{0}$ is the unique fixed point of $\mathbf{K}_{\mathbf{0}}$, the continuous dependence implies the estimate $\norm{\mathbf{v}_\mathbf{f}}{\fixptspace{k}} \lesssim \norm{\mathbf{f}}{\hilspacew{k}}$.
\end{remark}

%% file: proofofmainresults.tex
\section{Proof of main results}
\Cref{fixptarg} is the cornerstone of the paper. In what follows, we refine the result under stronger assumptions. Throughout this section, let $d,k \in \N$ with
    \begin{align*}
        d \geq 5 \ \text{odd} \ \ \text{and} \ \ k > d/2.
    \end{align*}
Our main interest is to connect (ACP) with the classical initial value problem. First, we upgrade the mild solution into a classical solution to (ACP).
\begin{lemma}\label{upgradeacp}
     Suppose $\mathbf{v} \in C([0,\infty), \hilspacewrad{k})$ is a mild solution to (ACP) with initial datum $\mathbf{f} \in \mathcal{D}(\mathbf{L})$. Then $\mathbf{v}$ is the unique classical solution to (ACP).
\end{lemma}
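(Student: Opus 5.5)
We appeal to the standard regularity theory for semilinear evolution equations with continuously differentiable nonlinearities, as in Pazy's book. The relevant result (Pazy, Theorem 6.1.5) reads: if $\mathbf{L}$ generates a $C_0$-semigroup on a Banach space $X$ and $\mathbf{N}: X \to X$ is continuously Fréchet differentiable, then for $\mathbf{f} \in \mathcal{D}(\mathbf{L})$ the mild solution on $[0,T)$ is a classical solution. All hypotheses are available. \Cref{linearisedsemigroup} gives that $\mathbf{L}$ generates $\mathbf{S}$ on $X=\hilspacewrad{k}$, and \Cref{nonlineraityextends} gives that $\mathbf{N}: X \to X$ is $C^1$ in the Fréchet sense. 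Since $\mathbf{v}$ is assumed continuous on $[0,\infty)$, we can apply the theorem on every bounded interval $[0,T]$ and then let $T\to\infty$.

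To make the argument self-contained, we would reproduce the key step. Set $\mathbf{G}(s):=\mathbf{N}(\mathbf{v}(s))$, which is continuous into $X$. The goal is to show that $\mathbf{G}$ is locally Lipschitz, and then $C^1$. Then Pazy's Corollary 4.2.10 / Theorem 4.2.4 implies that $\mathbf{u}(s)=\mathbf{S}(s)\mathbf{f}+\int_0^s\mathbf{S}(s-s')\mathbf{G}(s')\,ds'$ lies in $\mathcal{D}(\mathbf{L})$, is $C^1$, and satisfies $\pt_s\mathbf{u}=\mathbf{L}\mathbf{u}+\mathbf{G}$.

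To do this, consider for small $h>0$ the difference quotient $\mathbf{w}_h(s):=h^{-1}(\mathbf{v}(s+h)-\mathbf{v}(s))$. Using the mild formulation, $\mathbf{w}_h(s)$ equals $\mathbf{S}(s)h^{-1}(\mathbf{v}(h)-\mathbf{f})$ plus the integral of $\mathbf{S}(s-s')$ applied to $h^{-1}(\mathbf{N}(\mathbf{v}(s'+h))-\mathbf{N}(\mathbf{v}(s')))$. The first term is bounded because $\mathbf{f}\in\mathcal{D}(\mathbf{L})$. Indeed, $h^{-1}(\mathbf{v}(h)-\mathbf{f})=h^{-1}(\mathbf{S}(h)\mathbf{f}-\mathbf{f})+h^{-1}\int_0^h\mathbf{S}(h-s')\mathbf{N}(\mathbf{v}(s'))\,ds'$, and both pieces stay bounded as $h\to0$. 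The local Lipschitz bound of \Cref{nonlineraityextends} on bounded sets controls the integrand by $C\|\mathbf{w}_h(s')\|$. Gronwall then yields $\sup_{[0,T]}\|\mathbf{w}_h\|\le C_T$, so $\mathbf{v}$, and therefore $\mathbf{G}$, is Lipschitz on $[0,T]$.

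Lipschitz continuity alone only gives strong solutions in reflexive spaces. However, $X$ is a Hilbert space, so it is reflexive, and Pazy's Theorem 4.2.10 already yields that $\mathbf{u}$ is a strong solution. To reach a genuinely classical solution, we bootstrap via the $C^1$ property of $\mathbf{N}$. We would show that $\mathbf{w}_h$ converges to the solution $\mathbf{w}$ of the linearised integral equation
\[
\mathbf{w}(s)=\mathbf{S}(s)(\mathbf{L}\mathbf{f}+\mathbf{N}(\mathbf{f}))+\int_0^s\mathbf{S}(s-s')D\mathbf{N}(\mathbf{v}(s'))\mathbf{w}(s')\,ds',
\]
uniformly on $[0,T]$. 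This uses the continuity of $\mathbf{f}\mapsto D\mathbf{N}(\mathbf{f})$ and Gronwall again. Consequently $\mathbf{v}\in C^1([0,\infty),X)$, $\mathbf{G}$ is $C^1$, and the inhomogeneous result gives $\mathbf{v}(s)\in\mathcal{D}(\mathbf{L})$ together with the equation.

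Uniqueness follows because every classical solution is mild, by differentiating $s'\mapsto\mathbf{S}(s-s')\mathbf{v}(s')$. Two mild solutions then coincide by the Lipschitz bound and Gronwall. We expect the main obstacle to be the convergence of the difference quotients, which is where the $C^1$ property of $\mathbf{N}$ is essential; we would simply cite Pazy there.
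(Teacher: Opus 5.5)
Your proposal is correct and follows the paper's route: the paper's proof simply invokes Pazy's Theorem~6.1.5, using that $\mathbf{N}$ is Fr\'echet $C^1$ by Lemma~\ref{nonlineraityextends}. Your self-contained sketch runs the difference-quotient and Gr\"onwall argument with the Lipschitz bound on bounded sets instead of a global Lipschitz constant, which is exactly the adjustment the paper's footnote says Pazy's proof needs. One minor slip: the $C^1$-forcing result you use at the end is Pazy's Corollary~4.2.5, not ``Corollary~4.2.10''.
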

\begin{proof}
    We invoke \cite[p.~187, Theorem 1.5]{pazybook}\footnote{We mention that the proof as written in the book is inaccurate since it is falsely claimed that continuous differentiability of an operator implies Lipschitz continuity. However this can be readily adjusted by using only local Lipschitz continuity with slight adjustments to the proof.} to exploit $\mathbf{N}$ being $C^1$ in the sense of Fr\'{e}chet derivatives, which yields that $\mathbf{v}$ is already the classical solution to (ACP).
\end{proof}
Combining \Cref{fixptarg} and \Cref{upgradeacp} yields \Cref{mainthmfhsc}.

While it is certainly possible to continue assuming less regularity, we restrict ourselves to smooth data. The next step is to observe that under the assumption of smooth data the solution to (ACP) is jointly smooth.
\begin{lemma}\label{jointlysmooth}
     Suppose $\mathbf{v} \in C([0,\infty), \hilspacewrad{k})$ is a mild solution to (ACP) with initial datum $\mathbf{f} \in \cinftyradw \subset \mathcal{D}(\mathbf{L})$. Then $\mathbf{v} \in \cinftyjointlyw$.
\end{lemma}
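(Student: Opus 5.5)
The plan is a bootstrap in regularity. First I show that $\mathbf{v}$ is the mild solution in every $\hilspacewrad{\ell}$ with $\ell \geq k$. Then I use Sobolev embedding to obtain spatial smoothness, and finally I recover time derivatives from the equation.

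\textbf{Step 1 (persistence of regularity).} Since $\mathbf{f} \in \cinftyradw$, we have $\mathbf{f} \in \hilspacewrad{\ell}$ for every $\ell \geq k$. By the restriction property of the semigroups (the remark after \Cref{freesemigroup}), $\mathbf{S}(s)$ restricted to $\hilspacewrad{\ell}$ is the semigroup generated by $\mathbf{L}$ on that space. \Cref{nonlineraityextends} holds with $k$ replaced by $\ell$, and it even gains one derivative. Hence the standard local theory (contraction on short intervals $[s_0,s_0+T]$ in $C([s_0,s_0+T],\hilspacewrad{\ell})$) gives a local mild solution in $\hilspacewrad{\ell}$. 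By uniqueness in $\hilspacewrad{k}$, this local solution coincides with $\mathbf{v}$.

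To exclude blowup of the $\hilspacew{\ell}$ norm in finite time, I use an estimate that is linear in the higher norm. Because $N$ is a smooth function of $f_1$ composed with a Moser/Schauder-type tame estimate, one has
$\norm{\mathbf{N}(\mathbf{u})}{\hilspacew{\ell}} \lesssim C(\norm{\mathbf{u}}{\hilspacew{k}})\norm{\mathbf{u}}{\hilspacew{\ell}}$.
Since $\norm{\mathbf{v}(s)}{\hilspacew{k}}$ is bounded globally, Gr\"onwall's inequality bounds $\norm{\mathbf{v}(s)}{\hilspacew{\ell}}$ on every compact time interval. Thus $\mathbf{v} \in C([0,\infty),\hilspacewrad{\ell})$ for all $\ell$. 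The same argument with $\mathcal{D}(\mathbf{L})$ at each level, via \Cref{upgradeacp}, shows that $\mathbf{v}$ is a classical solution in each $\hilspacewrad{\ell}$. In particular $\mathbf{v} \in C^1([0,\infty),\hilspacewrad{\ell})$ and $\mathbf{v}(s) \in \mathcal{D}(\mathbf{L})$ at every level.

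\textbf{Step 2 (time derivatives).} From $\pt_s\mathbf{v} = \mathbf{L}\mathbf{v} + \mathbf{N}(\mathbf{v})$, where $\mathbf{L}$ acts as the differential operator of \Cref{techlem} (at most second order in $y$, so it maps $\hilspacew{\ell+2}$ into $\hilspacew{\ell}$), and $\mathbf{N}$ is $C^1$ on each level, I differentiate inductively. Each $\pt_s^j \mathbf{v}$ is a polynomial expression in spatial derivatives of $\mathbf{v}$ and in derivatives of $N$ evaluated at $v_1$, so $\pt_s^j\mathbf{v} \in C([0,\infty),\hilspacewrad{\ell})$ for all $j$ and $\ell$. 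A cleaner route is to note that $\mathbf{N}$ is $C^\infty$ in Fr\'echet sense on each $\hilspacewrad{\ell}$, since it is smooth composition with a Banach algebra, and to induct on $j$, using $\ell \mapsto \ell-2$ per time derivative.

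\textbf{Step 3 (pointwise smoothness).} Apply $\mathbf{W}$: then $\mathbf{W}\pt_s^j\mathbf{v} \in C([0,\infty),H^\ell(\B^d)^2)$ for all $j,\ell$. The Sobolev embedding $H^\ell(\B^d) \hookrightarrow C^m(\overline{\B^d})$ for $\ell > d/2+m$ gives $\mathbf{W}\mathbf{v} \in C^\infty([0,\infty)\times\overline{\B^d})^2$, with all derivatives bounded on compact time intervals. Hence $\mathbf{v} \in \cinftyjointlyw$. Radiality is inherited from the spaces.

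\textbf{Main obstacle.} The main obstacle is Step 1: the tame estimate in $\hilspacew{\ell}$, which requires care with the weight $W$. I would handle it via the algebra property \eqref{AP} with $a$ in the weighted space and the $\widetilde F''$ factor in the unweighted space. I would also use a Moser-type bound $\norm{G(\cdot,u)}{\sobspace{\ell}} \lesssim C(\norm{u}{L^\infty})(1+\norm{u}{\sobspace{\ell}})$. Together with $L^\infty \lesssim H^k$, this gives linear dependence on the top norm.
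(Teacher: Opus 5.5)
Your argument is correct, but Step 1 takes a considerably longer route than the paper.

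\textbf{The paper's route.} It never solves the equation at a higher regularity level. It uses that $\mathbf{N}$ maps $\hilspacewrad{k}$ into $\hilspacewrad{k+1}$ with bounds in terms of the $\mathcal{H}^k$-norm alone. This holds because the nonlinearity only enters the second component, which carries one derivative less. Hence, in the Duhamel formula
\begin{align*}
\mathbf{v}(s)=\mathbf{S}(s)\mathbf{f}+\int_0^s\mathbf{S}(s-s')\mathbf{N}(\mathbf{v}(s'))\,\mathrm{d}s',
\end{align*}
the right-hand side already lies in $C([0,\infty),\hilspacewrad{k+1})$ once $\mathbf{v}\in C([0,\infty),\hilspacewrad{k})$, by the restriction property. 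Iterating gives every level. This is an explicit linear bootstrap. It needs no local theory in $\mathcal{H}^\ell$, no identification via uniqueness, no tame Moser estimate, and no Gr\"onwall or continuation argument, and boundedness on compact time intervals comes for free.

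\textbf{Comparison.} You mention the gain of one derivative but do not exploit it, which is why the tame estimate becomes your ``main obstacle''. With the gain, $\norm{\mathbf{N}(\mathbf{u})}{\mathcal{H}^{\ell}}$ is controlled by $\norm{\mathbf{u}}{\mathcal{H}^{\ell-1}}$ alone, and the obstacle disappears. What your route buys is robustness: it would also work for nonlinearities without this structural smoothing.

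\textbf{Time derivatives.} Your approach is to show $\pt_s^j\mathbf{v}\in C([0,\infty),\hilspacewrad{\ell})$ for all $j,\ell$ and then apply Sobolev embedding to $\mathbf{W}\mathbf{v}$. This is more systematic than the paper's pointwise continuity argument. However, it requires $\mathbf{N}$ to be $C^j$ in the Fr\'echet sense for every $j$, whereas the paper only proves $C^1$. This follows from the same Nemytskii/Schauder argument applied to $N'$, $N''$, and so on, so it is a routine addition rather than a gap.

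A minor point: $\mathbf{L}$ is bounded from $\mathcal{H}^{\ell+1}$ to $\mathcal{H}^{\ell}$, by the same product-space structure. So you lose only one level per time derivative, not two.
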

\begin{proof}
     Let $\mathbf{f} \in \cinftyradw$. Since $\mathbf{v}(s) \in \hilspacewrad{k}$ for all $s \geq 0$, \Cref{nonlineraityextends} implies $\mathbf{N}(\mathbf{v}(s)) \in \hilspacewrad{k+1}$ for all $s \geq 0$. Hence the fixed-point equation~\eqref{fixedpointequation} satisfied by the mild solution $\mathbf{v}$ together with the restriction property \cite[p.~34, Lemma C.1]{Glogicrestriction} of $\mathbf{S}$ and the fact that $\cinftyradw$ is invariant under $\mathbf{S}(s)$ for all $s > 0$ recursively shows that $\mathbf{v}(s) \in \hilspacewrad{k+1}$ for all $s > 0, k \geq \lceil\frac{d}{2}\rceil$. In fact, $\mathbf{v} \in C([0,\infty),\hilspacewrad{k+1})$ by strong continuity on $\hilspacewrad{k+1}$ of all operators appearing on the right-hand side of the fixed-point equation for all $k \geq \lceil\frac{d}{2}\rceil$. By Sobolev embedding we have $\mathbf{v}(s) \in \cinftyradw$ for all $s \geq 0$. To show that the same is true for all $s$-derivatives we note by the previous Lemma that $\mathbf{v}$ is the classical solution and hence satisfies
    \begin{align*}
        \pt_s \mathbf{v}(s)=\mathbf{L}\mathbf{v}(s)+ \mathbf{N}(\mathbf{v}(s))
    \end{align*}
    for all $s > 0$. Since $\cinftyradw$ is invariant under $\mathbf{L}$ and $\mathbf{N}$, we have $\pt_s \mathbf{v}(s) \in \cinftyradw$ for all $s >0$. Hence we can consider the equation pointwise at every $y \in \B^d$. Note first that $\mathbf{L}$ is bounded as a map $\hilspacewrad{l+1} \rightarrow \hilspacewrad{l}$ for every $l \in \N$. Thus
    \begin{align*}
        \mathbf{L}\mathbf{v}:[0,\infty) &\rightarrow \left(C \left(\overline{\B^d};W \right)^2,\norm{\cdot}{L^\infty(\B^d;W)^2}\right),\\
        s &\mapsto \mathbf{L}\mathbf{v}(s)
    \end{align*}
    is continuous as the composition of continuous maps, since we can choose $l \in \N$ large enough such that $\hilspacewrad{l}$ continuously embeds into the latter space. By
    \begin{align*}
        \abs{\mathbf{L}\mathbf{v}(s_1)(y_1)-\mathbf{L}\mathbf{v}(s_2)(y_2)} \leq \abs{\mathbf{L}\mathbf{v}(s_1)(y_1)-\mathbf{L}\mathbf{v}(s_1)(y_2)} + \norm{\mathbf{L}\mathbf{v}(s_1)-\mathbf{L}\mathbf{v}(s_2)}{L^\infty(\B^d;W)^2}
    \end{align*}
    it follows that the map $[0,\infty) \times \overline{\B^d} \rightarrow \R, (s,y) \mapsto \mathbf{L}\mathbf{v}(s)(y)$ is continuous. We also observe that since
    \begin{align*}
        \mathbf{N}(\mathbf{v}(s))(y)= \begin{bmatrix}
            0 \\ v_1(s,y)^2 \int_0^1 (1-z)\widetilde{F}''(y,u_\Psi^*(y)+z v_1(s,y)) \, \mathrm{d}z
        \end{bmatrix},
    \end{align*}
    the map $(s,y) \mapsto \mathbf{N}(\mathbf{v}(s))(y)$ is as jointly smooth as $v_1$ is smooth in $s$. Therefore $s \mapsto \pt_s\mathbf{v}(s,y)$ can be continuously extended to $s=0$. For derivatives of higher order, note that $\pt_s \mathbf{v}(s) \in \cinftyradw \subset D(\mathbf{L})$ implies $\pt_s\mathbf{L}\mathbf{v}(s)= \mathbf{L}\pt_s\mathbf{v}(s)$ and with the same arguments as before we conclude inductively $\pt_s^l \mathbf{v}(s) \in \cinftyradw$ for all $l \in \N, s \geq 0$. Hence $\mathbf{v} \in \cinftyjointlyw$. This completes the proof.
\end{proof}
Next we observe that the first component of the solution to (ACP) solves a classical initial value problem. This is a consequence of the construction of the abstract Cauchy problem and the regularity of the solution we just proved.
\begin{lemma}\label{lemclasshypivp}
    Assume as in \Cref{jointlysmooth}. Then $v_1:= \left[\mathbf{v}\right]_1 \in \cinftyjointlywone$ solves
    \begin{align*}
        \begin{cases}
            &\left(\widehat{\square}+V(y)\right) v_1(s,y)+ N(v(s))(y)=0,\\
            &v_1(0,y)=f_1(y),\\
            &\widehat{\nabla}v_1(0,y)=f_2(y),
        \end{cases}
    \end{align*}
    where $\widehat{\nabla}= \partial_s + \Lambda_y + \frac{d-2-\abs{y}^2}{1-\abs{y}^2}$.
\end{lemma}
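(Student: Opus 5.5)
By \Cref{jointlysmooth} and \Cref{upgradeacp}, $\mathbf{v}=(v_1,v_2)$ is jointly smooth in $(s,y)$ up to the boundary, with values in $\cinftyradw$. It solves $\pt_s\mathbf{v}(s)=\mathbf{L}\mathbf{v}(s)+\mathbf{N}(\mathbf{v}(s))$ classically for $s>0$, and by continuity also at $s=0$, with $\mathbf{v}(0)=\mathbf{f}$. Since $\mathbf{v}(s)$ is smooth, $\mathbf{L}$ acts on it by the explicit formula of \Cref{techlem}, and we may read off both components pointwise for every $y\in\B^d$.

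The first component gives
\begin{align*}
\pt_s v_1 = v_2 - W^{-1}\Lambda(Wv_1)-(d-2)v_1 .
\end{align*}
Since $W^{-1}\Lambda W = (3-d)\frac{-\abs{y}^2}{1-\abs{y}^2}=(d-3)\frac{\abs{y}^2}{1-\abs{y}^2}$, this becomes
\begin{align*}
v_2=\Big(\pt_s+\Lambda+(d-2)+(d-3)\tfrac{\abs{y}^2}{1-\abs{y}^2}\Big)v_1 .
\end{align*}
A short simplification gives $(d-2)+(d-3)\frac{\abs{y}^2}{1-\abs{y}^2}=\frac{d-2-\abs{y}^2}{1-\abs{y}^2}$, so $v_2=\widehat{\nabla}v_1$. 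Evaluating at $s=0$ yields the two initial conditions $v_1(0)=f_1$ and $\widehat{\nabla}v_1(0)=f_2$.

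Next I substitute $v_2=\widehat\nabla v_1$ into the second component:
\begin{align*}
\pt_s v_2=-W^{-1}\Lambda(Wv_2)-v_2+W^{-1}\Delta(Wv_1)-(d-2)v_2-Vv_1+N(v_1).
\end{align*}
The task is to show that this is exactly $(\widehat\square+V)v_1+N(v_1)=0$. This is precisely the computation the paper performed formally in Section 2, where it derived $\pt_s(w_1,w_2)=\mathbf{W}^{-1}\mathbf{L}_0^{*,2-d}\mathbf{W}(w_1,w_2)+(0,\widehat\square w)$. So I would argue as follows. Set $f:=e^{(d-2)s}Wv_1$. Then the pair $(f,(\pt_s+\Lambda)f)$ equals $e^{(d-2)s}\mathbf{W}(v_1,\widehat\nabla v_1)$. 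Identity \eqref{firstordersystemstar} holds for any smooth $f$, and conjugating it back gives, for any smooth $w$,
\begin{align*}
\pt_s\begin{bmatrix} w\\ \widehat\nabla w\end{bmatrix}=\mathbf{L}_0\begin{bmatrix} w\\ \widehat\nabla w\end{bmatrix}+\begin{bmatrix}0\\ \widehat\square w\end{bmatrix}.
\end{align*}
Here I use \eqref{freebweq}, namely $e^{(d-2)s}W\widehat\square w=\square^* f$, and the identity $W^{-1}e^{(2-d)s}\square^*f=\widehat\square w$. Applying this with $w=v_1$ and comparing second components with the ACP gives $\widehat\square v_1=-Vv_1-N(v_1)$, where $-Vv_1$ comes from $\mathbf{L}'$ and $N(v_1)$ from $\mathbf{N}$. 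This is the claim.

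The main obstacle is the bookkeeping in this conjugation identity. One must check that the first component of $\mathbf{L}_0^{*,2-d}$, after conjugation, reproduces exactly the weight term $(d-3)\frac{\abs{y}^2}{1-\abs{y}^2}$ in $\widehat\nabla$. The exponential factor $e^{(d-2)s}$ does not commute with $\pt_s$, and that commutator is what produces the shift by $2-d$. I would verify this once directly from \eqref{freebweq}, using the explicit formula $\square^*=\pt_s^2+2\pt_s\Lambda-(\delta^{k\ell}-y^ky^\ell)\pt_k\pt_\ell+\pt_s+2\Lambda$. All other steps are immediate from the smoothness established earlier, which justifies evaluating pointwise and differentiating under the weights.
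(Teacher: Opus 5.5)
This is essentially the paper's proof. You read off the first component of (ACP) pointwise to get $v_2=\widehat{\nabla}v_1$ and the initial data; your computation of $W^{-1}\Lambda W=(d-3)\frac{|y|^2}{1-|y|^2}$ and of the weight $\frac{d-2-|y|^2}{1-|y|^2}$ is right. You then identify the second component via the Section 2 conjugation identity $\partial_s(w,\widehat{\nabla}w)=\mathbf{L}_0(w,\widehat{\nabla}w)+(0,\widehat{\square}w)$.

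There is one sign slip in the last step. With $\mathbf{N}(\mathbf{f})=(0,N(f_1))$, your own displayed second component carries $+N(v_1)$, so the comparison gives $(\widehat{\square}+V)v_1=N(v_1)$, not the stated equation. The stated equation actually requires defining $\mathbf{N}(\mathbf{f}):=(0,-N(f_1))$. This inconsistency is already in the paper, whose proof also writes $+Vv_1$ where $\mathbf{L}'$ contributes $-Vv_1$. It is harmless, since none of the estimates on $\mathbf{N}$ depend on its sign.
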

\begin{proof}
    Under the given assumptions we learned in the proof of \Cref{jointlysmooth} that we can consider (ACP) pointwise. Note that $\left[\mathbf{L}'(\mathbf{v})(s)\right]_1=0=[\mathbf{N}(\mathbf{v}(s))]_1$, so we get
    \begin{align*}
        \begin{cases}
            &\partial_s v_1(s,y)= \left[\mathbf{L}_0\mathbf{v}(s)\right]_1 (y),\\
            &\partial_s v_2(s,y)= \left[\mathbf{L}_0\mathbf{v}(s)\right]_2 (y) + V(y) v_1(s,y) + N(v_1(s))(y),\\
            &v_1(0,y)= f_1(y),\\
            &v_2(0,y)= f_2(y).
        \end{cases}
    \end{align*}
    with $\left[\mathbf{L}_0\mathbf{v}(s)\right]_1= v_2(s,\cdot)- W^{-1}\Lambda (W v_1(s,\cdot))- (d-2)v_1(s,\cdot)$ and $\left[\mathbf{L}_0\mathbf{v}(s)\right]_2 = -W^{-1}\Lambda(W v_2(s,\cdot))-(d-1)v_2(s,\cdot)+ W^{-1}\Delta(W v_1(s,\cdot))$.
    Solving towards $v_1$ leads to
    \begin{align*}
        \begin{cases}
            &v_2(s,y)= \left(\partial_s + \Lambda_y+ \frac{d-2-\abs{y}^2}{1-\abs{y}^2}\right) v_1(s,y),\\
            &\left(\widehat{\square}+ V(y) \right)v_1(s,y) + N(v_1(s))(y)=0,\\
            &v_1(0,y)= f_1(y),\\
            &\widehat{\nabla} v_1(0,y)= f_2(y).
        \end{cases}
    \end{align*}
    This proves the claim.
\end{proof}
By combining \Cref{jointlysmooth} and \Cref{lemclasshypivp} we obtain the following result.
\begin{lemma}\label{thmclasshypivp}
Let  $\mathbf{f}=(f_1,f_2) \in \cinftyradw$ with $\norm{\mathbf{f}}{\hilspacew{k}}$ small enough. Then the unique classical solution $\boldsymbol{\phi}$ to (ACP) is jointly smooth, namely $\boldsymbol{\phi} \in \cinftyjointlyw$ with $\boldsymbol{\phi}(s)$ being radial for every $s \geq 0$. In particular, $\phi_1 := [\boldsymbol{\phi}]_1$ is the unique solution in $\{\phi \in \cinftyjointlywone \mid \phi(s,\cdot) \ \text{radial for all} \ s \geq 0\}$ to the initial value problem
\begin{align*}
\begin{cases}
    0 =(\widehat{\square}+ V(y))\phi_1(s,y)+N(\phi_1(s,\cdot))(y) \ \ &\text{in} \ (0,\infty)\times\B^d,\\
    \phi_1(0,y) = f_1(y)  &\text{in} \ \B^d,\\
    \widehat{\nabla}\phi_1(0,y) =f_2(y)  &\text{in} \ \B^d.
\end{cases}
\end{align*}
There exists $\delta > 0$ such that
\begin{align*}
    \sup_{s \geq 0} \left(e^{\delta s}\norm{\phi_1(s)}{\sobspacew{k}}+e^{\delta s} \norm{\widehat{\nabla}\phi_1(s)}{\sobspacew{k-1}}\right)
    \lesssim \norm{f_1}{\sobspacew{k}}+ \norm{f_2}{\sobspacew{k-1}}.
\end{align*}
\end{lemma}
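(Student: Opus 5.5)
The proof assembles the preceding lemmas, and the one new ingredient is the uniqueness statement for the classical problem. First, since $\mathbf{f}\in\cinftyradw\subset\mathcal{D}(\mathbf{L})$ has $\norm{\mathbf{f}}{\hilspacew{k}}$ small, \Cref{fixptarg} gives a mild solution $\boldsymbol{\phi}\in\fixptspace{k}$. \Cref{upgradeacp} upgrades it to the unique classical solution of (ACP). \Cref{jointlysmooth} then gives $\boldsymbol{\phi}\in\cinftyjointlyw$. Radiality of $\boldsymbol{\phi}(s)$ holds by construction, since everything takes place in $\hilspacewrad{k}$. \Cref{lemclasshypivp} shows that $\phi_1=[\boldsymbol{\phi}]_1$ solves the stated initial value problem, and that $[\boldsymbol{\phi}]_2=\widehat{\nabla}\phi_1$ pointwise. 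The decay estimate then follows by unpacking the $\fixptspace{k}$ bound together with the remark after \Cref{fixptarg}:
\begin{align*}
\norm{\phi_1(s)}{\sobspacew{k}}+\norm{\widehat{\nabla}\phi_1(s)}{\sobspacew{k-1}}\lesssim \norm{\boldsymbol{\phi}(s)}{\hilspacew{k}}\lesssim e^{-\delta s}\norm{\mathbf{f}}{\hilspacew{k}}.
\end{align*}
Here $\norm{\mathbf{f}}{\hilspacew{k}}$ is exactly $\norm{f_1}{\sobspacew{k}}+\norm{f_2}{\sobspacew{k-1}}$ up to equivalence.

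For uniqueness, suppose $\psi\in\cinftyjointlywone$ is radial for each $s$ and solves the same initial value problem. Set $\boldsymbol{\psi}(s):=(\psi(s,\cdot),\widehat{\nabla}\psi(s,\cdot))$; this lies in $\cinftyradw\subset\mathcal{D}(\mathbf{L})$ for each $s$. Reversing the computation in \Cref{lemclasshypivp} shows that $\boldsymbol{\psi}$ satisfies $\partial_s\boldsymbol{\psi}=\mathbf{L}\boldsymbol{\psi}+\mathbf{N}(\boldsymbol{\psi})$ pointwise, with $\boldsymbol{\psi}(0)=\mathbf{f}$.

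The main obstacle is to turn this pointwise identity into the statement that $\boldsymbol{\psi}$ is a classical solution in $C([0,\infty),\hilspacewrad{k})\cap C^1((0,\infty),\hilspacewrad{k})$. Joint smoothness up to $\overline{\B^d}$, with $W\psi$ smooth, gives this on every compact time interval. Indeed, difference quotients in $s$ of $W\psi$ and its spatial derivatives converge uniformly on $[0,T]\times\overline{\B^d}$ by Taylor's theorem, so they converge in $H^k(\B^d)$. One point needs care: jointly smooth with weight should mean that $W\psi$ is in $C^\infty([0,\infty)\times\overline{\B^d})$. Without this the $\hilspacew{k}$-norm is not controlled, and I would use this interpretation.

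Once $\boldsymbol{\psi}$ is a classical solution, the standard argument gives $\boldsymbol{\psi}(s)=\mathbf{S}(s)\mathbf{f}+\int_0^s\mathbf{S}(s-s')\mathbf{N}(\boldsymbol{\psi}(s'))\,\mathrm{d}s'$: differentiate $s'\mapsto\mathbf{S}(s-s')\boldsymbol{\psi}(s')$ and integrate. The difference $\boldsymbol{\psi}-\boldsymbol{\phi}$ then satisfies a Duhamel identity on $[0,T]$. Both functions are bounded in $\hilspacew{k}$ on $[0,T]$ by continuity, so the local Lipschitz bound of \Cref{nonlineraityextends} with $M$ equal to this common bound applies. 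Gronwall's inequality yields $\boldsymbol{\psi}=\boldsymbol{\phi}$ on $[0,T]$ for every $T$, and hence $\psi=\phi_1$. Note that no smallness or decay of $\psi$ is needed, because this uniqueness argument is local in time.
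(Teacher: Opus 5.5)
Your proposal is correct and follows the paper's route, which consists only of chaining \Cref{fixptarg}, \Cref{upgradeacp}, \Cref{jointlysmooth} and \Cref{lemclasshypivp}. The paper leaves uniqueness within the smooth radial weighted class implicit, and your argument supplies it correctly: the lift $\psi\mapsto(\psi,\widehat{\nabla}\psi)$ stays in the weighted smooth class because $W\widehat{\nabla}\psi=(\partial_s+\Lambda+d-2)(W\psi)$, so it is a classical solution of (ACP), and Duhamel plus Gronwall on compact time intervals then identifies it with $\boldsymbol{\phi}$.
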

This small-data result for the evolution of the perturbation induces a large-data result for the initial value problem given by Eq.~\eqref{geowaveeqsesim} by considering $v= w_\Psi^* + \phi_1$.
\begin{corollary}\label{mainthmcolry}
Let $\mathbf{\mathbf{f}} \in \cinftyradw$ with $\norm{\mathbf{f}}{\hilspacew{k}}$ small enough. Then the initial value problem
\begin{align}\label{ivpcolry}
    \begin{cases}
        0= \widehat{\square}v(s,y)+ F(\Psi(s,y),v(s,y)) \ \ &\text{in} \ (0,\infty)\times\B^d,\\
        v(0,y)= u_\Psi^*(0,y)+ f_1(y) &\text{in} \ \B^d, \tag{IVP}\\
        \widehat{\nabla}v(0,y)=\widehat{\nabla}u_\Psi^*(0,y)+f_2(y) &\text{in} \ \B^d,
    \end{cases}
\end{align}
has a unique solution in the affine space
\begin{align*}
    \left\{w_\Psi^*+ \phi \mid \phi \in \cinftyjointlywone, \phi(s,\cdot) \ \text{radial for all} \ s \geq 0\right\}.
\end{align*}
There exists $\delta > 0$ such that
\begin{align*}
    &\sup_{s \geq 0}e^{\delta s} \left(\norm{w_\Psi^*(s)-v(s)}{\sobspacew{k}}+ \norm{\widehat{\nabla}w_\Psi^*(s)-\widehat{\nabla}v(s)}{\sobspacew{k-1}}\right)\\
    &\hphantom{\sup_{s \geq 0} \left(e^{\delta s}\norm{u_\Psi^*(s)-v(s)}{\sobspacew{k}}+e^{\delta s}\right.}\lesssim \norm{f_1}{\sobspacew{k}}+ \norm{f_2}{\sobspacew{k-1}}.
\end{align*}
\end{corollary}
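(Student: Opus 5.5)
The corollary follows from \Cref{thmclasshypivp} by substituting $v = w_\Psi^* + \phi_1$; the argument is purely algebraic. Recall that $w_\Psi^*(y) = \frac{2}{\abs{y}}\arctan\left(\frac{\abs{y}}{\sqrt{d-4}}\right)$ is independent of $s$ (it coincides with $u_\Psi^*$ in the notation of the statement). It is smooth and radial on $\overline{\B^d}$, since it is an even analytic function of $\abs{y}$ on $[0,1]$. It is also a static solution of $\widehat{\square}w + F(\Psi(s,y), w) = 0$, because $w^*$ solves Eq.~\eqref{geowaveeq} and $\widehat{\square}(\cdot\circ\Psi) = (\overline{\square}\,\cdot)\circ\Psi$.

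\textbf{Existence.} Let $\phi_1$ be the solution from \Cref{thmclasshypivp} and set $v := w_\Psi^* + \phi_1$. By linearity of $\widehat{\square}$ and the Taylor decomposition $F(\Psi,w^*_\Psi+\phi) = F(\Psi,w^*_\Psi) + V\phi + N(\phi)$ (this is how $V$ and $N$ were defined, cf.\ \Cref{buildingblock}), we get
\begin{align*}
\widehat{\square}v + F(\Psi,v) = \left[\widehat{\square}w_\Psi^* + F(\Psi,w_\Psi^*)\right] + \left[(\widehat{\square}+V)\phi_1 + N(\phi_1)\right] = 0 .
\end{align*}
The initial conditions in \eqref{ivpcolry} follow from the linearity of evaluation at $s=0$ and of $\widehat{\nabla}$ together with the initial conditions for $\phi_1$. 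Thus $v$ lies in the stated affine space and solves \eqref{ivpcolry}.

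\textbf{Uniqueness.} Let $v = w_\Psi^* + \phi$ be any solution in the affine space. Running the computation above in reverse shows that $\phi \in \cinftyjointlywone$ is radial and solves the perturbation IVP of \Cref{thmclasshypivp} with data $(f_1,f_2)$. The uniqueness statement of that lemma then gives $\phi = \phi_1$.

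\textbf{Decay estimate.} Since $v - w_\Psi^* = \phi_1$ and $\widehat{\nabla}v - \widehat{\nabla}w_\Psi^* = \widehat{\nabla}\phi_1$, the estimate is exactly the one in \Cref{thmclasshypivp}, with the same $\delta$ and up to a sign inside the norms.

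The main point requiring care is not the algebra but the function spaces. The affine space is centered at $w_\Psi^*$, which itself does not belong to $C^\infty(\overline{\B^d};W)$, so only the difference $\phi$ carries the weight. Accordingly, uniqueness must be phrased for the perturbation, exactly as in \Cref{thmclasshypivp}. One must also check that the ``$w_\Psi^*$'' and ``$u_\Psi^*$'' appearing in the statement denote the same static profile, so that the data and the affine space are consistent.
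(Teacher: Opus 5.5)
Your proposal is correct and follows the paper's route. The paper gives no separate argument beyond substituting $v = w_\Psi^* + \phi_1$ into \Cref{thmclasshypivp}. Existence, uniqueness in the affine space, and the decay estimate then transfer exactly as you describe, using that $w_\Psi^*$ is a static solution and that $N$ is the Taylor remainder of $F$ around $w_\Psi^*$.
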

Before reformulating \Cref{mainthmcolry} into physical coordinates, we upgrade the uniqueness to general smooth functions. This can be read as finite speed of propagation in FHSC. We invite the reader to consult \cref{fsoppic1} and \cref{fsoppic2} in the following argument.
\input{finitespeedofproppic}
\begin{lemma}
    The solution in \Cref{mainthmcolry} is unique in $C^\infty([0,\infty) \times \B^d)$.
\end{lemma}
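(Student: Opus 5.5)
The plan is to reduce uniqueness to a local energy/domain-of-dependence argument for the linear difference equation. Suppose $v, \tilde v \in C^\infty([0,\infty)\times\B^d)$ both solve \eqref{ivpcolry} with the same data. I do not assume radiality or boundary regularity. Set $\psi := v-\tilde v$. Then $\psi$ solves
\begin{align*}
\widehat{\square}\psi + a\,\psi = 0, \qquad \psi(0,\cdot)=0,\quad \widehat{\nabla}\psi(0,\cdot)=0,
\end{align*}
where $a(s,y) := \int_0^1 \partial_z F(\Psi(s,y), \tilde v + \theta(v-\tilde v))\,\mathrm{d}\theta$ is smooth on $[0,\infty)\times\B^d$. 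Since $\partial_z F$ is an entire, bounded-in-$y$ expression in $\abs{y}z$, the coefficient $a$ is locally bounded in $(s,y)$. The claim becomes that $\psi \equiv 0$. This is a linear uniqueness statement, and it only needs to hold on compact subsets of $[0,\infty)\times \B^d$.

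To prove it I would pass back to physical coordinates. Set $\Psi$ and $\psi\circ\Psi^{-1} =: \Phi$ on $D^+(\Sigma_0)$, and recall $\widehat{\square}(\cdot\circ\Psi) = \overline{\square}(\cdot)\circ\Psi$ with $\overline{\square} = t^2(\partial_t^2-\Delta)-2t\partial_t+2$. On $D^+(\Sigma_0)$ we have $t>0$, so dividing by $t^2$ gives a strictly hyperbolic operator $\square + \text{(lower order)}$ with smooth coefficients. The data on $\Sigma_0$ vanish, since $\psi$ and $\widehat{\nabla}\psi$ vanish at $s=0$, which gives $\Phi=\partial_t\Phi=0$ on the spacelike hypersurface $\Sigma_0$. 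Fix a point $p \in D^+(\Sigma_0)$. Its past light cone intersected with $\{\text{future of }\Sigma_0\}$ is a compact region $K_p$. This holds because $\Sigma_0$ is a hyperboloid asymptotic to the cone from $(\frac12,0)$, and every point of $D^+(\Sigma_0)$ lies strictly inside that cone, so its backward cone meets $\Sigma_0$ in a compact cap. This is what the figures on finite speed of propagation are meant to illustrate.

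On $K_p$ I would run the standard energy estimate. Take $E(\tau) = \frac12\int_{\{t=\tau\}\cap K_p} (\abs{\partial_t\Phi}^2+\abs{\nabla\Phi}^2+\Phi^2)$, bounding the lower-order terms by a constant, which is available because the coefficients are bounded on the compact set $K_p$. Applying the divergence theorem on the truncated cone, with the null boundary contributing a sign-definite flux, and then Gr\"onwall gives $E\equiv 0$, hence $\Phi(p)=0$. Equivalently, I could cite Holmgren/the classical domain of dependence theorem for linear hyperbolic equations (e.g.\ H\"ormander). Since $p$ is arbitrary, $\psi\equiv0$. Note that the lower boundary of the region is the curved spacelike piece of $\Sigma_0$ rather than a flat slice. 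I would handle this by foliating with the level sets $\Sigma_s$, which are spacelike, and using the energy current of $\square$ contracted with $\partial_t$ on these leaves.

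I expect the main obstacle to be the geometric bookkeeping. One has to verify that the backward cone of each interior point stays inside $D^+(\Sigma_0)$ and does not reach the boundary at null infinity where the weights degenerate. One also has to check that the vanishing of $(\psi,\widehat{\nabla}\psi)$ at $s=0$ is really Cauchy data on $\Sigma_0$, which holds because $\widehat{\nabla}$ is transversal to $\{s=0\}$. Working directly in FHSC instead would require checking that $\partial_s$ is timelike, that the leaves are spacelike for the principal symbol $\partial_s^2+2\partial_s\Lambda+(y^ky^\ell-\delta^{k\ell})\partial_k\partial_\ell$, and that the characteristic cones through $(s,y)$ with $\abs{y}<1$ reach $s=0$ in a compact subset of $\B^d$. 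Both routes reduce to this check.
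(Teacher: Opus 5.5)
Your proposal is correct. At its core it uses the same domain-of-dependence energy argument as the paper, on the same region: the paper's set $J^{-}(s_0,y_0)$ is the FHSC description of your $K_p$ for $p=\Psi(s_0,y_0)$. The difference is where the estimate is carried out.

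The paper stays in FHSC. It passes to the conformal variables $f_1,f_2$ of Eq.~\eqref{firstordersystemstar}, in which $\widehat{\square}$ becomes $\square^*$. It computes that the slices of the causal past are the balls $D_s=\B^d_{R(s)}(P(s)y_0)$, and checks by hand (using $R'=R-1$, $P'=P$ and Cauchy--Schwarz) that the moving-boundary terms combine into a nonpositive flux.

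You undo the coordinate change instead. On $D^+(\Sigma_0)\subset\{t\geq 1\}$ one has $t^{-2}\overline{\square}=\partial_t^2-\Delta_x-\frac{2}{t}\partial_t+\frac{2}{t^2}$. So everything reduces to textbook finite speed of propagation for the Minkowski wave operator with smooth lower-order terms on the compact set $J^-(p)\cap J^+(\Sigma_0)$. You need only that this set is compact, not its explicit shape. Your route is shorter and leans on standard theory. The paper's route keeps the argument inside the conformal variables and weights used for the semigroup, and makes the causal structure in FHSC explicit.

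Two small corrections:
\begin{itemize}
\item Holmgren's theorem requires analytic coefficients, and your coefficient $a$ is built from the merely smooth $v,\tilde v$. You should therefore rely on the energy argument, or on the domain-of-dependence theorem for hyperbolic operators with $C^\infty$ coefficients, not on Holmgren.
\item If you use the flat slices $\{t=\tau\}$, then for $\tau$ near $1$ part of the boundary of the region lies on $\Sigma_0$. The flux through that piece vanishes because $\Phi$ and all its first derivatives vanish on $\Sigma_0$, so this is harmless, but you should say so. Foliating by the spacelike leaves $\Sigma_s$, as you also suggest, avoids the issue entirely.
\end{itemize}
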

\begin{proof}
    Fix $(s_0,y_0) \in (0,\infty) \times \B^d$. Consider its causal past up to $s=0$ given by the set
    \begin{align*}
        J^{-}(s_0,y_0):= \left\{(s,y) \in [0,s_0) \times \B^d : \left(\frac{e^s}{1-\abs{y}^2}-\frac{e^{s_0}}{1-\abs{y_0}^2}\right)^2 - \left| \frac{e^s y}{1-\abs{y}^2}-\frac{e^{s_0}y_0}{1-\abs{y_0}^2}\right|^2 \geq 0\right\}.
    \end{align*}
    For every $0 \leq s < s_0$ we consider
    \begin{align*}
        D_s:=\{y \in \B^d: (s,y) \in J^{-}(s_0,y_0)\}.
    \end{align*}
    Clearly we have $J^{-}(s_0,y_0)= \bigcup_{0 \leq s < s_0} \{s\} \times D_s$. Carefully simplifying the condition in $J^{-}(s_0,y_0)$ reveals that every $D_s$ is a ball. Indeed, set $R(s):= 1-e^{s-s_0}$, $P(s):= e^{s-s_0}$. We obtain
    \begin{align*}
        D_s = \B^d_{R(s)}(P(s)y_0).
    \end{align*}
    With the variable pair introduced in Eq.~\eqref{variablepair} we define a local energy
    \begin{align*}
        E_{(s_0,y_0)}(w)(s) &:= e^{2(d-2)s} \int_{D_s} \abs{\nabla (W w_1(s,\cdot))}^2 + \abs{W w_1(s,\cdot)}^2+ \abs{W w_2(s,\cdot)}^2\\
        &= \int_{D_s} \abs{\nabla f_1(s,\cdot)}^2 + \abs{f_1(s,\cdot)}^2+ \abs{f_2(s,\cdot)}^2.
    \end{align*}
    Here $f_1,f_2$ are chosen as in Eq.~\eqref{firstordersystemstar} and the equality follows by Eq.~\eqref{connecttoblowupeq}. To differentiate towards $s$, for every $0 \leq s < s_0$ we introduce the map
    \begin{align*}
        \Phi_s: \B^d &\rightarrow D_s,\\
        x &\mapsto R(s)x + P(s) y_0.
    \end{align*}
    We have $\det(D \Phi_s(x))=R(s)^d$. For brevity we denote $g(s,\cdot):=\abs{\nabla f_1(s,\cdot)}^2 + \abs{f_1(s,\cdot)}^2+ \abs{f_2(s,\cdot)}^2$. We compute
    \begin{align*}
        \partial_s E_{(s_0,y_0)}(w)(s)= \int_{D_s} \partial_s g(s,\cdot) &+ R(s)^{d-1} \partial_s R(s) \int_{\pt \B^d} g(s,\Phi_s(x)) \ \mathrm{d}S(x)\\ 
        &+ R(s)^{d-1}\partial_s P(s) \int_{\pt \B^d} y_0 \cdot x \ g(s,\Phi_s(x)) \ \mathrm{d}S(x).
    \end{align*}
    For the first integrand we use the identities from Eq.~\eqref{firstordersystemstar} to compute (for brevity, set $g \equiv g(s,\cdot), f_1 \equiv f_1(s,\cdot), f_2 \equiv f_2(s,\cdot)$ and identify $y^j$ with the map $y \mapsto y^j$)
    \begin{align*}
        \partial_s g &= 2 \pt_s \partial^k f_1 \pt_k f_1 + 2 \pt_s f_1 f_1 +2 \pt_s f_2 f_2\\
        &= 2 \pt^k(f_2- \Lambda f_1)\pt_k f_1 + 2 f_1(f_2- \Lambda f_1) + 2f_2[-\Lambda f_2-f_2+ \Delta f_1 + \square^* f]\\
        &= 2\partial^k (\partial_k f_1 f_2) -2 \pt^k f_1 \pt_k f_1 - \Lambda(\partial^k f_1 \partial_k f_1) + 2f_1 f_2 - \Lambda (f_1^2) -\Lambda (f_2^2) -2f_2^2+ f_2 \square^* f\\
        &= 2 \pt^k (\pt_k f_1 f_2)- \pt_j [y^j g] + (d-2) \pt^k f_1 \pt_k f_1 + d f_1^2 + (d-2) f_2^2 +2 f_2 \square^* f\\
        &\leq \pt^k (\pt_k f_1 f_2)- \pt_j[y^j g]+ d g + 2f_2 \square^*f.
    \end{align*}
    Integration by parts yields
    \begin{align*}
        \int_{D_s} \pt_s g(s,\cdot) \leq d \int_{D_s} g(s,\cdot) &+ 2\int_{D_s}f_2(s,\cdot) \square^* f(s,\cdot)\\
        &+R(s)^{d-1} \int_{\pt \B^d} 2 x^k \pt_k f_1(s, \Phi_s(x))f_2(s,\Phi_s(x)) \ \mathrm{d}S(x)\\
        &- R(s)^d \int_{\pt \B^d} g(s,\Phi_s(x)) \ \mathrm{d}S(x) \\
        &- R(s)^{d-1} P(s) \int_{\pt \B^d} x \cdot y_0 \ g(s, \Phi_s(x)) \ \mathrm{d}S(x)
    \end{align*}
    Using the identities $\pt_s R(s)= -1+ R(s)$, $\pt_s P(s) = P(s)$, we see that for the derivative of the energy we have some cancellations of the boundary terms and are left with
    \begin{align*}
        \partial_s E_{(s_0,y_0)}(w)(s) \leq d E_{s_0,y_0}(w)(s) + 2\int_{D_s}f_2 \square^* f(s,\cdot) &+ R(s)^{d-1} \int_{\pt \B^d} 2 x^k \pt_k f_1(s, \Phi_s(x))f_2(s,\Phi_s(x))
        \\&- R(s)^{d-1} \int_{\pt \B^d} g(s,\Phi_s(x)) \ \mathrm{d}S(x).
    \end{align*}
    Cauchy-Schwarz and $\abs{x}=1$ yields
    \begin{align*}
        2x^k \partial_k f_1(s,\Phi_s(x))f_2(s,\Phi_s(x)) \leq \abs{\nabla f_1(s, \Phi_s(x)}^2 + f_2(s,\Phi(x))^2 \leq g(s,\Phi(x)).
    \end{align*}
    Finally this shows that
    \begin{align*}
        \partial_s E_{(s_0,y_0)}(w)(s) \lesssim E_{s_0,y_0}(w)(s) + \int_{D_s}\abs{\square^* f(s,\cdot)}^2,
    \end{align*}
    where we also used Cauchy-Schwarz for the mixed term.

    We want to use this inequality for the difference of two solutions $v$ and $v'$ of the initial value problem \eqref{ivpcolry} given in \Cref{mainthmcolry}. We remind ourselves of the identity in Eq.~\eqref{freebweq}, which says that
    \begin{align*}
        \square^*(f_v - f_{v'})= e^{(d-2)s} W(y) \widehat{\square}(v-v'),
    \end{align*}
    for $f_v, f_{v'}$ related to $v, v'$ by Eq.~\eqref{functiontransform}. The fact that $v, v'$ solve \eqref{ivpcolry} means $\widehat{\square}(v(s,\cdot)-v'(s,\cdot))= F(\Psi(s,\cdot), v(s,\cdot))-F(\Psi(s,\cdot), v'(s,\cdot))$. Here
    \begin{align*}
        F(\Psi(s,y), v(s,y))= (d-3) \frac{\sin \left(2\abs{y}v(s,y)\right)-2 \abs{y}v(s,y)}{2\abs{y}^3},
    \end{align*}
    from Eq.~\eqref{geowaveeqsesim}. By observing that the function $(y,z) \mapsto (d-3) \frac{\sin \left(2\abs{y}z\right)-2 \abs{y}z}{2\abs{y}^3}$ is smooth we obtain the bound $F(\Psi(s,\cdot), v(s,\cdot))-F(\Psi(s,\cdot), v'(s,\cdot)) \lesssim v(s,\cdot)- v'(s,\cdot)$. This shows that
    \begin{align*}
        \int_{D_s} \abs{\square^*(f_v - f_{v'})}^2 \lesssim e^{2(d-2)s} \int_{D_s} \abs{W(v(s,\cdot)-v'(s,\cdot))}^2 \leq E_{(s_0,y_0)}(v-v')(s).
    \end{align*}
    In total this means that
    \begin{align*}
        \partial_s E_{(s_0,y_0)}(v-v')(s) \lesssim  E_{(s_0,y_0)}(v-v')(s).
    \end{align*}
    Grönwall's inequality and $E_{(s_0,y_0)}(v-v')(0)=0$ proof uniqueness in the FHSC light cone $J^{-}(s_0,y_0)$. Since $(s_0,y_0) \in (0,\infty) \times \B^d$ was arbitrary, this completes the proof.
\end{proof}
In physical coordinates, we obtain a rescaled version of \Cref{mainthm}.
\begin{corollary}
    There exists $\varepsilon >0$ such that for every $(f_1, f_2)$ defined on $\Sigma_0$ for which $(f_1 \circ \Psi(0,\cdot), f_2 \circ \Psi(0,\cdot)) \in \cinftyradw$ with 
    \begin{align*}
        \sqrt{\norm{f_1 \circ \Psi(0,\cdot)}{H^k\left(\B^d;W\right)}^2+\norm{f_2 \circ \Psi(0, \cdot)}{H^{k-1}\left(\B^d;W\right)}^2} \leq \varepsilon,
    \end{align*} the hyperboloidal initial value problem
    \begin{align*}
        \begin{cases}
            0 = \square_{t,x} \left(\frac{1}{t}w(t,x)\right)+ (d-3)\frac{\sin\left(2\frac{\abs{x}}{t}w(t,x)\right)-2\frac{\abs{x}}{t}w(t,x)}{2\abs{x}^3} \ \ &\text{in} \ D^+(\Sigma_0)\setminus \Sigma_0,\\
            w = w^*+f_1 &\text{in} \ \Sigma_0,\\
            \overline{\overline{\nabla}}w =\overline{\overline{\nabla}}w^*+f_2 &\text{in} \ \Sigma_0,
        \end{cases}
    \end{align*}
    has a unique solution $w \in C^\infty(D^+(\Sigma_0))$. Here $\overline{\overline{\nabla}}= \widehat{\nabla}(\cdot \circ \Psi) \circ \Psi^{-1}$. The solution $w$ can be decomposed into $w=w^* + \phi$ such that $\phi$ has the following properties
    \begin{itemize}
        \item $\phi \circ \Psi \in C^\infty([0,\infty) \times \overline{\B^d}; W)$,
        \item $\phi \circ \Psi(s,\cdot)$ is radial for all $s \geq 0$.
    \end{itemize}
    The solution is attracted by $w^*$ in the sense that there exists $\delta >0$ such that
    \begin{align*}
        \sup_{s \geq 0}\left(e^{\delta s}\sqrt{\norm{(w^*- w)\circ \Psi(s,\cdot)}{H^k\left(\B^d;W\right)}^2+\norm{(\overline{\overline{\nabla}}w^*- \overline{\overline{\nabla}}w) \circ \Psi(s,\cdot)}{H^{k-1}\left(\B^d;W\right)}^2}\right) \\
        \lesssim \sqrt{\norm{f_1 \circ \Psi(0,\cdot)}{H^k\left(\B^d;W\right)}^2+ \norm{f_2 \circ \Psi(0,\cdot)}{H^{k-1}\left(\B^d;W\right)}^2}.
    \end{align*}
\end{corollary}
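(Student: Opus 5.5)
The plan is to transfer \Cref{mainthmcolry}, together with the uniqueness lemma just proved, to physical coordinates through the diffeomorphism $\Psi:[0,\infty)\times\B^d\to D^+(\Sigma_0)$. Given $(f_1,f_2)$ on $\Sigma_0$ with $(f_1\circ\Psi(0,\cdot),f_2\circ\Psi(0,\cdot))\in\cinftyradw$ and small norm, set $\mathbf{f}:=(f_1\circ\Psi(0,\cdot),f_2\circ\Psi(0,\cdot))$. By definition of $\hilspacew{k}$ the smallness hypothesis is exactly $\norm{\mathbf{f}}{\hilspacew{k}}\leq\varepsilon$. Choose $\varepsilon$ as in \Cref{mainthmcolry}; this produces $v=w^*_\Psi+\phi_1$ with $\phi_1\in\cinftyjointlywone$, radial for every $s$, solving \eqref{ivpcolry}.

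Next define $w:=v\circ\Psi^{-1}$ and $\phi:=\phi_1\circ\Psi^{-1}$ on $D^+(\Sigma_0)$. Then $w=w^*+\phi$ because $w^*_\Psi=w^*\circ\Psi$. Smoothness of $w$ on $D^+(\Sigma_0)$ follows from joint smoothness of $v$ on $[0,\infty)\times\B^d$ together with $\Psi$ being a diffeomorphism. The two bulleted properties of $\phi$ are then literally those of $\phi_1$. For the equation, use the defining identity $\widehat{\square}(\cdot\circ\Psi)=(\overline{\square}\,\cdot)\circ\Psi$ and $F(\Psi(s,y),v)$ as in Eq.~\eqref{geowaveeqsesim}. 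This gives $\overline{\square}w+F(t,x,w)=0$ on $D^+(\Sigma_0)\setminus\Sigma_0$, which is the image of $s>0$. It remains to check the elementary identity $\overline{\square}w=t^3\square_{t,x}(w/t)$, i.e.\ expand $\square(w/t)$. Dividing the equation by $t^3$ and using $F/t^3=(d-3)\frac{\sin(2\frac{|x|}{t}w)-2\frac{|x|}{t}w}{2|x|^3}$ yields the stated PDE.

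For the initial conditions, $\Sigma_0=\Psi(\{0\}\times\B^d)$, so $w=w^*+f_1$ on $\Sigma_0$ is $v(0,\cdot)=w^*_\Psi+f_1\circ\Psi(0,\cdot)$. The derivative condition holds because $\overline{\overline{\nabla}}$ is defined precisely so that $(\overline{\overline{\nabla}}w)\circ\Psi=\widehat{\nabla}v$. The decay estimate is then a verbatim rewrite of the one in \Cref{mainthmcolry}, since $(w^*-w)\circ\Psi(s,\cdot)=w^*_\Psi-v(s)$, and similarly for $\overline{\overline{\nabla}}$.

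Uniqueness works the same way. If $w'\in C^\infty(D^+(\Sigma_0))$ is another solution, then $w'\circ\Psi\in C^\infty([0,\infty)\times\B^d)$ solves \eqref{ivpcolry} by the same identities. The preceding lemma (uniqueness in $C^\infty([0,\infty)\times\B^d)$) then gives $w'\circ\Psi=v$.

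The only genuinely computational point, and the step I would double-check carefully, is the conjugation identity $t^3\square(w/t)=t^2(\partial_t^2-\Delta)w-2t\partial_tw+2w$. Expanding $\partial_t^2(w/t)=\partial_t^2w/t-2\partial_tw/t^2+2w/t^3$ confirms it directly. I also need to make sure that the smoothness class ``$C^\infty$ up to $\Sigma_0$'' matches joint smoothness including $s=0$, and \Cref{jointlysmooth} provides this.
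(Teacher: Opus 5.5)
Your proposal is correct and follows the paper's own (implicit) route. The paper states this corollary with no separate proof: it is just \Cref{mainthmcolry} and the subsequent uniqueness lemma pulled back through the diffeomorphism $\Psi$, using $\overline{\square}w=t^3\square(w/t)$ and $F/t^3$ to recover the stated equation, exactly as you describe. The only cosmetic point is that \Cref{mainthmcolry} states the decay estimate with sums of norms rather than square roots of sums of squares, which is harmless since the two are equivalent.
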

At last, consider $u(t,x):= w(t,x)/t$. Adjusting the data from $C_{\operatorname{rad}}^\infty(\overline{\B^d};W)^2$ to $C_{\operatorname{rad}}^\infty(\overline{\B^d};W_1)^2$ due to $t= \frac{1}{1-\abs{y}^2}$ on $\Sigma_0$, and showing that the second evolution component can be replaced by a rescaled time derivative yields \cref{mainthm}.
\subsection{Proof of \cref{mainthm}}
\begin{proof}
    Throughout this proof denote functions obtained by concatenation with the coordinate transform $\Psi$ by $(\cdot)_\Psi$. Setting $u(t,x)= w(t,x)/t$ in the previous corollary yields that $u$ solves
    \begin{align*}
        \begin{cases}
            0= \square u(t,x)+ (d-3) \frac{\sin(2\abs{x}u(t,x))-2\abs{x}u(t,x)}{2 \abs{x}^3} \ \ &\text{in} \ D^{+}(\Sigma_0) \setminus \Sigma_0,\\
            u= u^* + g_1  &\text{in} \ \Sigma_0,\\
            \overline{\nabla}u = \overline{\nabla}u^* + g_2 &\text{in} \ \Sigma_0,
        \end{cases}
    \end{align*}
    where we defined $\overline{\nabla}_{t,x}:= \frac{1}{t}\overline{\overline{\nabla}}_{t,x}(t \cdot), g_1(t,x) := f_1(t,x)/t, g_2(t,x)= f_2(t,x)/t$. We see that assuming $((g_1)_\Psi(0,\cdot),(g_2)_\Psi(0,\cdot)) \in C_{\operatorname{rad}}^\infty(\overline{\B^d};W_1)^2$ is equivalent to assuming $((f_1)_\Psi(0,\cdot),(f_2)_\Psi(0,\cdot)) \in C_{\operatorname{rad}}^\infty(\overline{\B^d};W)^2$. The uniqueness in the space of smooth functions stays true, since multiplication by $t$ is a diffeomorphism on functions defined on $D^{+}(\Sigma_0)$. Putting $\psi:= u^*-u$  in the stability statement with the fact that $t= \frac{e^s}{1-\abs{y}^2}$ on $\Sigma_s$ and changing the norms accordingly shows that
    \begin{align*}
        \sup_{s \geq 0} \left(e^{(1+\delta) s} \norm{\left(\psi_\Psi(s,\cdot), (\overline{\nabla}\psi)_\Psi(s,\cdot)\right)}{H^k\times H^{k-1}(\B^d;W_1)} \right)
        \lesssim \norm{\left((g_1)_\Psi(0,\cdot), (g_2)_\Psi(0,\cdot)\right)}{H^{k} \times H^{k-1}\left(\B^d;W_1\right)}.
    \end{align*}
    The specific form
    \begin{align*}
         (\overline{\nabla}\psi)_\Psi(s,y)= \left(\pt_s + y^j \pt_j + \frac{d-1}{1-|y|^2}\right)\psi_\Psi(s,y)
    \end{align*}
    together with the inequality
    \begin{align*}
        \norm{\left((\cdot)^j \pt_j +\frac{d-1}{1-|\cdot|^2}\right)\psi_\Psi(s,\cdot)}{H^{k-1}(\B^d;W_1)} =\norm{(\cdot)^j \pt_j\left(W_1 \psi_\Psi(s,\cdot)\right)}{H^{k-1}(\B^d)} \lesssim \norm{W_1 \psi_\Psi(s,\cdot)}{H^k(\B^d)}
    \end{align*}
    implies $\norm{(\psi_\Psi(s,\cdot),\pt_s \psi_\Psi(s,\cdot))}{H^k \times H^{k-1}(\B^d;W_1)} \simeq \norm{(\psi_\Psi(s,\cdot),(\overline{\nabla} \psi)_\Psi(s,\cdot))}{H^k \times H^{k-1}(\B^d;W_1)}$. Consider the time derivative $\pt_t \psi(t,x)$ and rescale with $\omega(t,x):=\frac{t^2-|x|^2}{t}$. Note that on $\Sigma_0$ we obtain $\omega \equiv 1$. In FHSC we have $(\omega \pt_0 \psi)_\Psi(s,y)=\left((1+|y|^2)\pt_s - (1-|y|^2) y^j \pt_j \right) \psi_\Psi(s,y)$. Similar to before we obtain the estimate 
    \begin{align*}
        \norm{(1-|\cdot|^2)(\cdot)^j \pt_j \psi_\Psi(s,\cdot)}{H^{k-1}(\B^d;W_1)} \lesssim \norm{\psi_\Psi(s,\cdot)}{H^k(\B^d;W_1)}.
    \end{align*}
    Together with $1+|\cdot|^2 \simeq 1$ we get
    \begin{align*}
        \norm{(\psi_\Psi(s,\cdot),(\omega \pt_0\psi)_\Psi(s,\cdot))}{H^k \times H^{k-1}(\B^d;W_1)} \simeq \norm{(\psi_\Psi(s,\cdot),(\overline{\nabla} \psi)_\Psi(s,\cdot))}{H^k \times H^{k-1}(\B^d;W_1)}.
    \end{align*}
    This justifies replacing $\overline{\nabla}$ with $\omega \pt_0$ in the formulation of the initial value problem and the stability estimate, resulting in the first part of \Cref{mainthm}. 
    
    The additional statement about the decay rate is then deduced as follows. Sobolev embedding implies
    \begin{align*}
        \norm{W_1(\cdot)\psi_\Psi(s,\cdot)}{L^\infty \left(\B^d\right)} \lesssim e^{-(1+\delta) s} \ \text{for all} \ s \geq 0.
    \end{align*}
As a consequence (note that $W_1 \geq 1$), we have
    \begin{align*}
        \abs{u^*_\Psi(s,y)-u_\Psi(s,y)} \lesssim e^{-(1+\delta) s} 
    \end{align*}
    for all $s \geq 0$ and all $y \in \B^d$. In Cartesian coordinates this reads
    \begin{align*}
        \abs{u^*(t,x)-u(t,x)} \lesssim \left(\frac{t}{t^2-|x|^2}\right)^{1+\delta}
    \end{align*}
    for all $(t,x) \in D^+(\Sigma_0)$.
    Consider a fixed $x \in \R^d$. We have $\abs{u^*(t,x)-u(t,x)} \lesssim t^{-(1+\delta)}$ for all $t \geq 1$ such that $(t,x) \in D^+(\Sigma_0)$. Since $u^*(t,x) \simeq t^{-1}$ for all $t \geq 1$ such that $(t,x) \in D^+(\Sigma_0)$, we deduce that the same is true for $u(\cdot,x)$. This completes the proof of \Cref{mainthm}.
\end{proof}

%% file: finitespeedofproppic.tex
\begin{figure}
    \centering
\begin{minipage}[t]{0.45\textwidth}
        \begin{tikzpicture}
        \begin{axis}[
            axis lines=middle,
            xlabel={$|x|$},
            ylabel={$t$},
            xmin=0, xmax=1.1,
            ymin=0.9, ymax=2,
            ticks=none
        ]
            \addplot[black, very thick, domain=0:1, samples=200] 
                {(1+sqrt(4*x^2+1))/2};
            \addlegendentry{$l(y)$}

            \addplot[black, domain=0:1, samples=200] 
                {2.5*abs(x) > 2*x^2/(-1 + sqrt(1 + 4*x^2)) ? 2.5*abs(x) : nan};
            \addlegendentry{$2.5|y|$}
            
            \addplot[black, domain=0:1, samples=201] 
                {5*abs(x) > 2*x^2/(-1 + sqrt(1 + 4*x^2)) ? 5*abs(x) : nan};
            \addlegendentry{$5|y|$}

            \addplot[black, domain=0:1, samples=200] 
                {(sqrt(1 + 4*x^2*0.875^2) + 1)/(2*0.875)};
            \addlegendentry{$k(y, 0.55)$}
            
            \addplot[black, domain=0:1, samples=200] 
                {(sqrt(1 + 4*x^2*0.75^2) + 1)/(2*0.75)};
            \addlegendentry{$k(y, 0.75)$}
    
            \addplot[black, domain=0:1, samples=200] 
                {(sqrt(1 + 4*x^2*0.625^2) + 1)/(2*0.625)};
            \addlegendentry{$k(y, 0.75)$}

            \addplot[red, very thick, domain=0:1, samples=200] 
                {(sqrt(1 + 4*x^2*0.875^2) + 1)/(2*0.875) < 3/2-abs(1/2-x) ? (sqrt(1 + 4*x^2*0.875^2) + 1)/(2*0.875) : nan};
            \addlegendentry{$k(y, 0.55)$}
    
            \addplot [name path= B, red, very thick, domain=0:1, samples=200] 
                {(sqrt(1 + 4*x^2*1^2) + 1)/(2*1) < 3/2-abs(1/2-x) ? (sqrt(1 + 4*x^2*1^2) + 1)/(2*1) : nan};
            \addlegendentry{$k(y, 0.55)$}
    
            \addplot [name path= A, dashed, domain=0:1, samples=200]
                {3/2-abs(1/2-x) > 2*x^2/(-1 + sqrt(1 + 4*x^2)) ? 3/2-abs(1/2-x) : nan};
    
            \addplot[gray, opacity= 0.3] fill between [of= A and B];
    
            \node at (0.5,1.5) [circle,scale=0.3, draw=black, fill=black]{};
            \node[black, above] at (0.5,1.5) {\small$\Psi(s_0,y_0)$};
    
            \node[black,right] at (1,1.72) {\small$\Sigma_{s'}$};
            \node[black,right] at (1,1.62) {\small$\Sigma_0$};
            
            \legend{};
        \end{axis}
    \end{tikzpicture}
    \captionsetup{justification=raggedright, width=\linewidth}
    \captionof{figure}{Projection of the causal past of the point $(t,x)=\Psi(s_0,y_0)$ up to the initial hyperboloid. The red patches of the hyperboloids $\Sigma_{s'}, \Sigma_0$ are the images of $D_{s'}$ resp. $D_0$ for a given $0 \leq s' < s_0$.}
        \label{fsoppic1}
\end{minipage}
\hfill
\begin{minipage}[t]{0.45\textwidth}
    \begin{tikzpicture}[declare function= {g(\x)=(\x <= 1/3) * (ln(1+\x)) + (\x > 1/3) * (ln(2-2*\x));}]
        \begin{axis}[
        axis lines= middle,
        xlabel= {$|y|$},
        ylabel= {$s$},
        xmin= 0, xmax=2/3,
        ymin= 0, ymax=1/2,
        ticks=none
        ]

        \addplot[black, domain=0:1, samples=2]
        {ln(8/7)};
        \addplot[black, domain=0:1, samples=2]
        {ln(8/6)};
        \addplot[black, domain=0:1, samples=2]
        {ln(8/5)};
        \draw[black] (10/50,0)--(10/50,1/2);
        \draw[black] (10/25,0)--(10/25,1/2);
        \node at (1/3,0.28768207245) [circle, scale=0.3,draw=black, fill=black]{};
        % this weird number is an approx. of -log(3/4) %
        \node[black,above] at (1/3,0.28768207245) {\small $(s_0,y_0)$};

        \addplot[name path=C, dashed, domain= 0:1/2, samples=400]
            {g(x)};

        \addplot[name path=D, red, very thick, domain= 0:1/2, samples=2]
        {0};

        \addplot[gray, opacity=0.3] fill between [of= C and D];
        \addplot[red, very thick, domain= 0:1/2, samples= 200]
        { ln(8/7) < g(x) ? ln(8/7) : nan};
        % log(8/7) is the s for the hyperboloid in the upper picture %
        \node[red,above] at (7/24,0.13353139262) {\small $D_{s'}$};
        \node[red,above] at (7/24,0) {\small $D_0$};
        % 0.13353139262 approximates log(8/7)%
        \end{axis}
    \end{tikzpicture}
    \captionsetup{justification=raggedright, width=\linewidth}
    \captionof{figure}{Projection of $J^{-}(s_0,y_0)$, i.e. left picture in FHSC. Note that the boundary (dashed) is not given by a straight line but by two logarithmic functions.}
    \label{fsoppic2}
\end{minipage}
\end{figure}